\numberwithin{equation}{section} \textwidth=140mm \textheight=200mm
\newcommand{\bbR}{\mathbb R}
\newcommand{\bbZ}{\mathbb Z}
\renewcommand{\epsilon}{\varepsilon}
\newcommand{\be}{\begin{equation}}
\newcommand{\ee}{\end{equation}}
\newcommand{\no}{\nonumber}
\newcommand{\C}{\mathbb{C}}
\newcommand{\N}{\mathbb{N}}
\newcommand{\R}{\mathbb{R}}
\newcommand{\T}{\mathbb{T}}
\newcommand{\Z}{\mathbb{Z}}
\newcommand{\cB}{{\mathcal B}}
\newcommand{\cC}{{\mathcal C}}
\newcommand{\cE}{{\mathcal E}}
\newcommand{\cF}{{\mathcal F}}
\newcommand{\cH}{{\mathcal H}}
\newcommand{\cM}{{\mathcal M}}
\newcommand{\cR}{{\mathcal R}}
\newcommand{\cU}{{\mathcal U}}
\newtheorem{theorem}{Theorem}[section]
\newtheorem{lemma}[theorem]{Lemma}
\newtheorem{corollary}[theorem]{Corollary}
\newtheorem{hypothesis}[theorem]{Hypothesis}
\newtheorem{definition}[theorem]{Definition}
\newtheorem{proposition}[theorem]{Proposition}
\newtheorem{remark}[theorem]{Remark}
\date{\today}
\begin{document}

\title[Threshold effects of the two-particle Schr\"odinger operators...] {Threshold effects of the two-particle Schr\"odinger
operators on lattices}

\author{Saidakhmat N. Lakaev $^{1}$}
\address{
$^1$ Samarkand State University, Samarkand (Uzbekistan)}
\email{slakaev@mail.ru}
\author{Volker Bach $^{2}$}
\address{$^2$ Institut fuer Analysis und Algebra
Carl-Friedrich-Gauss-Fakultaet Technische Universitaet Braunschweig
(Germany)} \email{v.bach@tu-bs.de}
\author{W.~de Siqueira Pedra$^{3}$}
\address{
$^3$ Departamento de Fisica Matemetica do Instituto de Fisica,
Universidade de Sao Paulo, 05314-970 Sao Paulo, SP, Brazil}
\email{wpedra@if.usp.br}

\maketitle
\begin{abstract}
We consider a wide class of the two-particle Schr\"{o}dinger operators $%
H_{\mu}(k)=H_{0}(k)+\mu V, \,\mu>0,$ with a fixed two-particle quasi-momentum $k$ in the $d$%
-dimensional torus $\mathbb{T}^d$, associated to the Bose-Hubbard
hamiltonian $H_{\mu}$ of a system of two identical quantum-mechanical particles (bosons) on the
$d$- dimensional hypercubic lattice $\mathbb{Z}%
^d$ interacting via short-range pair potentials. We study the
existence of eigenvalues of $H_{\mu}(k)$ below the threshold of the
essential spectrum depending on the interaction energy $\mu>0$ and
the quasi-momentum $k\in \mathbb{T}^d$ of particles. We prove that
the threshold (bottom of the essential spectrum), as a singular
point (a threshold resonance or a threshold eigenvalue), creates
eigenvalues below the essential spectrum under perturbations of both
the coupling constant $\mu>0$ and the quasi-momentum $k$ of the
particles. Moreover, we show that if the threshold is a regular
point, then it does not create any eigenvalues under small
perturbations of the coupling constant $\mu>0$ and the
quasi-momentum $k$.
\end{abstract}
\subjclass{2010 Mathematics Subject Classification.  Primary: 81Q10,
Secondary:  47A10, 47B39}

\textit{Keywords and phrases: discrete Schr\"{o}dinger operator,
two-particle system, hamiltonian, conditionally negative, dispersion
relation, resonance, eigenvalue.}

\section{Introduction}

The main goal of the present paper is to give a thorough
mathematical treatment of the spectral properties for the two-particle lattice Schr\"{o}dinger operators $%
H_{\mu}(k)=H_{0}(k)+\mu V, \,\mu>0,$ where $k\in \mathbb{T}^d$ being the two-particle quasi-momentum in the $d$%
-dimensional torus, associated to the Bose-Hubbard hamiltonian
$H_{\mu}$ of a system of two identical quantum-mechanical particles (bosons) on
the $d$-dimensional hypercubic lattice $\mathbb{Z}%
^d$ interacting via short-range pair attractive potentials, with an
emphasis on {\it new threshold phenomena}  that are not present in
the continuous case (see, e.g., \cite{ALMM2006}, \cite{BdsPL2018},
\cite{FariaIorattiCarrol2002}, \cite{GrafSchenker1997},
\cite{Lakaev1992}--\cite{Mat}, \cite{Mog} for relevant discussions).

Throughout physics, stable composite objects are usually formed by
way of attractive forces, which allows the constituents to lower
their energy by binding together. Repulsive forces separate
particles in the free space. However, in a structured environment
such as a periodic potential and in the absence of dissipation,
stable composite objects can exist even for repulsive interactions
that arise from the lattice band structure \cite{Winkler}.

The Bose-Hubbard model, which is used to describe the attractive and
repulsive pairs is the theoretical basis for applications. The work
\cite{Winkler} exemplifies the important correspondence between the
Bose-Hubbard model \cite{ Bloch,Jaksch_Zoller} and atoms in optical
lattices, and helps to pave the way for many more interesting
developments and applications \cite{Thalhammer}. In particular, the
dynamics of  ultracold atoms loaded in the lower band is well
described by the Bose-Hubbard hamiltonian.

In the continuum space (continuum), due to rotational invariance,
the multi-particle hamiltonian separates into a free hamiltonian for
the center of mass and a hamiltonian for the relative motion. Bound
states are eigenstates of the latter hamiltonian.

The fundamental difference between the continuum and the lattice
multi-particle hamiltonian is that in lattice, the free hamiltonian
is not rotationally invariant. Therefore, in contrast to the
continuum, in the lattice there is \textit{excess mass} phenomenon,
i.e., the effective mass of the bound state of an $N$-particle
system is, in general, strictly greater than the sum of the
effective masses of the constituent quasi--particles. This has been
discussed, e.g., in  \cite{Mat}, \cite{Mog}.

For the Schr\"odinger operators with short-range potentials on
$\bbR^3$ and $\bbZ^3$ and on perturbations of Schr\"odinger
operators with periodic potentials  one observes the emission of
negative bound states from the continuous spectrum at the so-called
{\it critical potential strength} (see, e.g., \cite{AGH1982},
\cite{AGHH2004}, \cite{ALMM2006}, \cite{FassariKlaus},
\cite{KlausSimon1980}, \cite{Lakaev1992}, \cite{Lakaev1993},
\cite{LakaevTilavova1994}, \cite{Rauch1980}, \cite{Yafaev1974},
\cite{Yafaev1979}). This phenomenon is closely related to the
existence of gene\-ralized eigenfunctions, which are, vanishing at
infinity, solutions of the Schr\"odinger equation with zero energy,
but are not square integrable. These solutions are usually called
zero-energy resonance functions and, in this case, the Hamiltonian
is  called a critical one and the Schr\"odinger operator is said to
have a  zero-energy resonance (virtual level).

The appearance of the negative eigenvalues for the critical
(non-negative) two--particle Schr\"{o}\-dinger operators on $\bbR^3$
under infinitesimally negative perturbations, i.e., the presence of
the threshold resonances, leads to the existence of infinitely many
bound states (Efimov's effect) for the corresponding three-particle
system (see, e.g., \cite{AHW1971}, \cite{Efimov1970},
\cite{OvchinnikovSigal1989}, \cite{Sobolev1993}, \cite{Tamura1991},
\cite{Tamura1993}, \cite{Yafaev1974}).

The threshold of the essential spectrum for the Schr\"{o}dinger
operator is either regular or singular point (threshold resonance or
threshold eigenvalue). In the continuum, whether the threshold is a
regular or a singular point for the two-particle Schr\"{o}dinger
operator depends only on the interaction $\mu V$. In the lattice
case it depends not only on the interaction $\mu V$ of particles,
but also to the {\it quasi-momentum} $k$  of the particle pair.

In the current paper, we study the emission mechanisms of
eigenvalues
at the thresholds of the essential spectrum for the  operator $%
\widehat{H}_{\mu}(k)$, depending on the interaction $\mu \widehat{V}$ and the quasi-momentum $k\in \mathbb{T}%
^{d}$, for all dimensions $d\geq 3$. We prove the existence of bound
states in the following two cases:

(\textit{low} $d$) In the case $d=1,2$, for any nonzero potential
$\mu \widehat{V}$ and each quasi-momentum $k\in \mathbb{T}^{d}$ of
the particle pair (Theorem \ref{existence} and \ref{Inequalities}).

(\textit{high} $d$) In the case $d\geq 3$, for \textit{large}
potentials $\mu \widehat{V}$ and for each $k\in \mathbb{G}$, where
$\mathbb{G}\subset\mathbb{T}^{d}$ is a region that includes the
point $0\in\mathbb{T}^{d}$ (Theorem \ref{existence(d>3)} and Remark
\ref {remark(d>3)}).

We establish the existence of eigenvalues $E_{\mu}(k)$ of
$\widehat{H}_{\mu}(k)$ for each non-zero $k\in \mathbb{T}^{d}$
provided that the bottom of the essential spectrum of
$\widehat{H}_{\mu}(0)$ is its threshold resonance or threshold
eigenvalue, by applying a generalization of the well known
Birman-Schwinger principle (see, e.g., \cite{Lieb2004},
\cite{Simon1979}). The method we use gives implicit forms of the
eigenfunctions (bound states) by means of eigenfunctions of the
generalized Birman-Schwinger operator.

We find a two-sided assessments for the eigenvalue $E(k)$ of
$\widehat{H}_{1}(k)$ depending on the quasi-momentum $k\in
\mathbb{T}^{d}$ by means of ${\mathcal{E}}_{\min}(k)$, the threshold
of the essential spectrum of $\widehat{H}_{1}(k)$ and the eigenvalue
$E(0)$ of $\widehat{H}_{1}(0)$ (Corollary \ref{Inequality3}). This
result shows that for any nonzero $k\in \mathbb{T}^{d}$ the
eigenvalue $E(k)$ is strictly greater than $E(0)$ and lies below the
threshold ${\mathcal{E}}_{\min}(k)$.

We prove, for $d\ge3$, that  the threshold
${\mathcal{E}}_{\min}(k)$, as a singular point, i.e., as a threshold
resonance or a threshold eigenvalue, creates eigenvalues below the
essential spectrum under small perturbations of both the effective
mass (by changing the quasi-momentum of the particles) and the
coupling constant $\mu>0$. However, if the threshold
${\mathcal{E}}_{\min }(k)$ is a regular point, then no eigenvalues
are created under such perturbations.

Moreover, we show that for each $k_{0}\in \mathbb{G}$, there exists
a neighborhood $\mathrm{G}(k_{0})\subset \mathbb{G}$ and a manifold
$\mathbb{M}_{=}(k_{0}) \subset \mathbb{G}$  of codimension one, such
that for each $k\in \mathbb{M}_{=}(k_{0})$ the threshold
${\mathcal{E}}_{\min }(k)$ is a singular point of
$\widehat{H}_{\mu}(k)$, but it is a regular point if $k\in
\mathbb{G}\setminus \mathbb{M}_{=}(k_{0})$. Furthermore, if the
threshold ${\mathcal{E}}_{\min }(k_{0})$ of
$\widehat{H}_{\mu}(k_{0})$: \,\text{(i)} is a regular point, then
for each $k$ lying in a neighborhood of $k_{0}\in \mathbb{G}$, the
number of eigenvalues of $\widehat{H}_{\mu}(k)$ lying below the
threshold $\cE_{\min}(k)$ remains unchanged (Theorem
\ref{regular_point}). \,\text{(ii)} is a singular point, then there
is an open set $\mathbb{M}_{>}(k_{0})\subset \mathbb{G}$ with
$k_{0}\in \overline{\mathbb{M}_{>}(k_{0})}$ such that for all $k\in
\mathbb{M}_{>}(k_{0})$ the operator $\widehat{H}_{1}(k)$ has an
eigenvalue below ${\mathcal{E}}_{\min }(k)$ (Theorem
\ref{singular_point}).

We observe that for $k_0=0\in \T^3$, the case \text{(i)} above
yields Efimov's effect for the three-particle lattice
Schr\"{o}dinger
operators $\mathit{\mathbf{H}}%
(K),\,K\in\T^d$, associated to the Bose-Hubbard hamiltonian of a
system of three particles on $\mathbb{Z}^{3}$ interacting via short-range pair potentials: the operator $\mathit{\mathbf{H}}%
(0)$ has infinitely many eigenvalues below the bottom of the three
particle essential spectrum, whereas for all non-zero $K\in
\mathbb{T}^{3}$ close to $K=0$, the operator $\mathit{\mathbf{H}}%
(K)$ has finitely many eigenvalues (see, e.g.,
\cite{AbdullaevLakaev2003}, \cite{ALzM2004}, \cite{ALK2012},
\cite{Lakaev1993}).

It is also shown that for $d\ge3$, the total number of both the
eigenvalues (counting multiplicities) below the threshold of
essential spectrum and the multiplicity of the {\it singular point}
${\mathcal{E}}_{\min }(k)$ is a non-decreasing function of the
quasi-momentum $k\in \mathbb{T}^{d}$ and $\mu>0$ (Corollary
\ref{greater} and \ref{total_number}).

We note that unlike the case of Schr\"{o}%
dinger operators in $\mathbb{R}^{d}$, the lattice Schr\"{o}dinger
operators may have eigenvalues above the threshold of the essential
spectrum, when the sign of the potential is changed. The repulsive
case can be investigated exactly in the same way as the attractive
one treated here.

The paper is organized as follows: In Section 2 we introduce the
lattice two-particle hamiltonians, decompose them into the von
Neumann direct integrals and introduce the Schr\"{o}dinger operators
$H_{\mu}(k)$ with fixed two-particle quasi-momentum $k\in
\mathbb{T}^{d}$.  In Section 3 we prove a generalized
Birman-Schwinger principle to introduce notions of \textit{singular
and regular points} of the essential spectrum and to study threshold
effects for the lattice Schr\"{o}dinger operators
$\,\widehat{H}_{\mu}(k)$. The main results of the paper are stated
in Section 4 and proved in Section 5.

\section{The two-particle hamiltonians on lattices}
\subsection{The two-particle hamiltonian -- position space representation}

Let $\mathbb{Z}^{d}$ be the $d$-dimensional hypercubic lattice and
$\Z^{d}\otimes\Z^{d}$ be cartesian product.

Let $\ell ^{2}(\Z^{d}\otimes\Z^{d})$ be the Hilbert space of
square-summable functions on $\Z^{d}\otimes\Z^{d}$  and $\ell
^{2,s}(\Z^{d}\otimes\Z^{d})\subset\ell ^{2}(\Z^{d}\otimes\Z^{d})$ be
the subspace of symmetric functions.

The free hamiltonian $\widehat{\mathbb{H}}_{0}$ of a system of two
identical particles (bosons), in the position space representation,
is usually associated with the following self-adjoint (bounded)
multidimensional Toeplitz-type operator on the Hilbert space $\ell
^{2,s}(\Z^{d}\otimes\Z^{d})$ (see, e.g., \cite{Mat}):
\begin{equation}
(\widehat{\mathbb{H}}_{0}\hat{f})(x_1,x_2)= \sum_{s_1,s_2\in\Z^d}
[\hat{\varepsilon}(x_1-s_1)+\hat{\varepsilon}(x_2-s_2)]\hat{f}(s_1,s_2),\,\,\hat{f}\in\ell
^{2,s}(\Z^{d}\otimes\Z^{d}), \label{two_free}
\end{equation}
where $\hat{\varepsilon}\in \ell ^{1}(\mathbb{Z}^{d})$ is a real
valued even function.

The interaction operator $\widehat{\mathbb{V}}$ of two bosons, in
the position space representation, is the multiplication operator by
the non-positive function $\hat{v}\in
\ell^{1}(\mathbb{Z}^{d};\mathbb{R}_{0}^{-})$ , i.e.,
\begin{equation}\label{interaction}
(\widehat{\mathbb{V}}\hat{f})(x_1,x_2)=\hat{v}(x_1-x_2)\hat{f}(x_1,x_2),\,\,\hat{f}\in\ell
^{2,s}(\Z^{d}\otimes\Z^{d}).
\end{equation}%

The total hamiltonian $\widehat{\mathbb{H}}_{\mu}$  of a system of
two identical particles (bosons) with the pair non-positive
interaction $\hat{v}\in
\ell^{1}(\mathbb{Z}^{d};\mathbb{R}_{0}^{-})$, in the position space
representation, is associated with the bounded self--adjoint
operator on $\ell ^{2,s}(\Z^{d}\otimes\Z^{d})$:
\begin{equation}\label{position}
\widehat{\mathbb{H}}_{\mu}=\widehat{\mathbb{H}}_{0}+\mu\widehat{\mathbb{V}},\,\,
\mu>0.
\end{equation}%

\subsection{The two-particle hamiltonian -- momentum space representation}
Let $\mathbb{T}^{d}=(\mathbb{R}/2\pi \mathbb{Z)}^{d}\equiv \lbrack
-\pi ,\pi )^{d}$ be the $d$- dimensional torus, the Pontryagin dual
group of $\mathbb{Z}^{d}$, equipped with the (normalized) Haar
measure
\begin{equation*}
\eta (\mathrm{d}p)=\frac{\mathrm{d}^{d}p}{(2\pi )^{d}}\text{ }.
\end{equation*}
Let \,$L^2(\T^d,\eta)$ be the Hilbert space of square-integrable
functions on $\T^d$ %
and ${\mathcal{F}}:\ell^{2}(\mathbb{Z}^{d})\rightarrow
L^2(\T^d,\eta)$ be the Fourier transform
\begin{equation}\label{potential}
{f}(p):=({\mathcal{F}}\hat{f})(p)=\sum_{x\in\Z^d}e^{
\mathrm{i}(p,x)}\hat{f}(x)
\end{equation}
and ${\mathcal{F}}^{\ast}$ is its inverse
\begin{equation}
\lbrack {\mathcal{F}}^{\ast} {f}](x)\ :=\ \int_{\mathbb{T}^{d}}e^{-%
\mathrm{i}(p,x)}{f}(p)\,\eta (\mathrm{d}p)\text{ }.
\label{Fourierinvers}
\end{equation}

The free hamiltonian $
{\mathbb{H}}_{0}=(\mathcal{F}\otimes \mathcal{F})\widehat{\mathbb{H}}_{0}\mathbb{(}%
\mathcal{F}^{\ast}\otimes \mathcal{F}^{\ast}) $ of a system of two
identical particles (bosons), in the momentum space representation,
is the multiplication operator by the function ${\varepsilon}
(k_{1})+{\varepsilon}(k_{2})$ on the Hilbert space
$L^{2,s}(\mathbb{T}^{d}\otimes\mathbb{T}^{d},\eta\otimes\eta)$ of
symmetric functions on the cartesian product
$\mathbb{T}^{d}\otimes\mathbb{T}^{d}$ of the torus $\mathbb{T}^{d}$:
\begin{equation}
({\mathbb{H}}_{0}{f})(k_{1},k_{2})=({\varepsilon}%
(k_{1})+{\varepsilon}(k_{2})){{f}}(k_{1},k_{2}),
\end{equation}%
where the continuous function (dispersion relation) $\varepsilon$ is
given by
\begin{equation}\label{dispersion}
{\varepsilon}(p)=[{\mathcal{F}}\hat{\varepsilon}](p)=\sum_{x\in\Z^d}e^{
\mathrm{i}(p,x)} \hat{\varepsilon}(x). \end{equation}

The interaction operator  ${\mathbb{V}}=(\mathcal{F}\otimes \mathcal{F}%
)\widehat{\mathbb{V}}(\mathcal{F^{\ast}}\otimes \mathcal{F}^{\ast})$
is the integral operator of convolution type acting in
$L^{2,s}(\mathbb{T}^{d}\otimes\mathbb{T}^{d},\eta\otimes\eta)$ as
\begin{equation}\label{Interaction1}
({\mathbb{V}}{{f}})(k_{1},k_{2})={\int\limits_{%
\mathbb{T}^{d}}}{v}(k_{1}-q){{f}}(q,k_{1}+k_{2}-q)\eta (%
\mathrm{d}q),
\end{equation}%
where the kernel function ${v}(\cdot )$ is given by
\begin{equation}\label{potential}
{v}(p)=[{\mathcal{F}}\hat{v}](p)=\sum_{x\in\Z^d}e^{ \mathrm{i}(p,x)}
\hat{v}(x).
\end{equation}

The total two-particle hamiltonian ${\mathbb{H}}_{\mu}$ of a system
of two identical quantum-mechanical particles (bosons) interacting
via a short range attractive potentials $\hat{v}$, in the momentum
space representation, is the bounded self--adjoint operator acting
in $L^{2,s}(\mathbb{T}^{d}\otimes\mathbb{T}^{d},\eta\otimes\eta)$ as
\begin{equation}\label{momentum}
{\mathbb{H}}_{\mu}={\mathbb{H}}_{0}+\mu{\mathbb{V}}\text{}.
\end{equation}%

\subsection{Decomposition of the two--particle hamiltonians into the von Neumann
direct integrals}

Let $k=k_{1}+k_{2}\in \mathbb{T}^{d}$ be the
\textit{quasi--momentum} of a pair of particles. For any fixed $k\in
\mathbb{T}^{d}$, we define the set $\mathbb{Q}_{k}\subset
\mathbb{T}^{d}\otimes\mathbb{T}^{d}$ as
\begin{equation*}
\mathbb{Q}_{k}=\{(q,k-q):q\in \mathbb{T}^{d}\}.
\end{equation*}

We further define the map
\begin{equation*}
\pi :\mathbb{T}^{d}\otimes\mathbb{T}^{d}\rightarrow
\mathbb{T}^{d},\quad \pi ((k_{1},k_{2}))=k_{1}\text{ }.
\end{equation*}%
Denote by $\pi _{k}=\pi |_{\mathbb{Q}_{k}}$, $k\in \mathbb{T}^{d}$
the restriction of $\pi $ on $\mathbb{Q}_{k}\subset
\mathbb{T}^{d}\otimes\mathbb{T}^{d}$. At this point it is useful to
remark that $\mathbb{Q}_{k}$ is a $d$-dimensional manifold
isomorphic to ${\mathbb{T}}^{d}$. The
map $\pi _{k}$ is bijective from $\mathbb{Q}%
_{k}\subset \mathbb{T}^{d}\otimes\mathbb{T}^{d}$ onto
$\mathbb{T}^{d}$ with the inverse
\begin{equation*}
\pi _{k}^{-1}(q)=(q,k-q)\text{ }.
\end{equation*}

Consequently, the Hilbert space
$L^{2,s}(\mathbb{T}^{d}\otimes\mathbb{T}^{d},\eta\otimes\eta)$ can
be decomposed into the von Neumann direct integral as
\begin{equation}
L^{2,s}(\mathbb{T}^{d}\otimes\mathbb{T}^{d},\eta\otimes\eta)\simeq
\int\limits_{k\in {\mathbb{T}}^{d}}^{\oplus }L^{2,e}(\T^d,\eta)\eta
(\mathrm{d}k), \label{decomposSpace}
\end{equation}%
where $L^{2,e}(\T^d,\eta)$ is the Hilbert space of square-integrable
even functions on $\T^d$. The total hamiltonian ${\mathbb{H}}_{\mu}$
of a system of two particles, in the position space representation,
obviously commutes with the representation of the discrete group
$\Z^d$ by shift operators on the lattice, and hence
${\mathbb{H}}_{\mu}$ can be decomposed into the integral (see, e.g.,
\cite{ALMM2006})
\begin{equation}
{\mathbb{H}}_{\mu}\simeq\int\limits_{k\in {\mathbb{T}}^{d}}^{\oplus }{H}_{\mu}%
(k)\eta (\mathrm{d}k)  \label{fiber}.
\end{equation}%
In the physical literature, the parameter $k\in \T^d$ is called
two-particle {\it quasi-momen\-tum} and the corresponding operator
 ${H}_{\mu}%
(k)$ is called Schr\"{o}dinger operator  with {\it fixed
quasi--momentum} $k$.

\subsection{Schr\"{o}dinger operators for particle pairs with fixed
quasi--momentum} For any $\mu>0$ and $k \in {\T}^d$, the
Schr\"{o}dinger operator ${H}_{\mu}%
(k)$ from the decomposition \eqref{fiber}, in the momentum space
representation, is bounded self-adjoint operator acting in
$L^{2,e}(\mathbb{T}%
^{d}, \eta)$ as
\begin{equation}\label{two_schroedinger}
{H}_{\mu}(k)={H}_{0}(k)+\mu{V}.
\end{equation}%
Here the non-perturbed operator ${H}_{0}(k)$, $k\in \mathbb{T%
}^{d}$ is the multiplication operator by the function
${\mathcal{E}}_{k}$ (quasi-momentum-dependent pair dispersion
relation) acting in $L^{2,e}(\mathbb{T}^{d}, \eta)$ as
\begin{equation}
({H}_{0}(k) {f})(p)={\mathcal{E}}_{k}(p)
{f}(p),
\end{equation}%
where
\begin{equation}
{\mathcal{E}}_{k}(p)={\varepsilon}\left( \frac{k}{2}+p\right) +{%
\varepsilon}\left( \frac{k}{2}-p\right) \text{ }
 \label{free_two-particle}
\end{equation}%
and ${\varepsilon}(\cdot)$ is defined in  \eqref{dispersion}.

Note that we identified the torus $\mathbb{T}^{d}$ with $[-\pi ,\pi
)^{d}\subset \mathbb{R}^{d}$ so that the operation change of
variables  is well--defined on $\mathbb{T}^{d}$.

The perturbation operator
 ${V}$ in \eqref{two_schroedinger} is defined
as
\begin{equation}\label{interaction_fixed}
({V} f)(p)=\int\limits_{
\mathbb{T}^{d}}{v}(p-q){f}(q)\eta(%
\mathrm{d}q),\qquad f\in L^{2,e}(\mathbb{T}^{d}, \eta).
\end{equation}%

In the position space representation, the Schr\"{o}dinger operator
$\widehat{H}_{\mu}(k)$ with a fixed quasi--momentum $k\in
{\mathbb{T}}^{d}$ acts in the Hilbert space
$\ell^{2,e}(\mathbb{Z}^{d})$ of all square-summable even functions
on $\Z^d$  as
\begin{equation}\label{two-particle}
\widehat{H}_{\mu}(k)=\widehat{H}_{0}(k)+\mu \widehat{V},\,\,\mu>0,
\end{equation}%
where
\begin{equation} \label{two_particleFree}
(\widehat{H}_{0}(k) \hat{f})(x) \ = \ \sum\limits_{s\in\Z^d} {\hat{\mathcal{E}}}_{k}(x-s)%
\hat{f}(s),\,\, \hat{f}\in\ell^{2,e}(\Z^d),
\end{equation}
\begin{equation}
{\hat{\mathcal{E}}}_{k}(x) :=\ \int\limits_{\mathbb{T}^{d}}e^{-%
\mathrm{i}(p,x)}{\mathcal{E}}_{k}(p)\,\eta (\mathrm{d}p)\text{ }.
\label{Fourierinvers}
\end{equation}
and $\widehat{V}$ in \eqref{two-particle} is defined as
\begin{equation}\label{interact_fix}
(\widehat{V} \hat{f})(x)=\hat{v}(x)\hat{f}(x),\,\,\hat{f}\in\ell
^{2,e}(\mathbb{Z}^{d}).
\end{equation}%

\section{Spectral properties of $\widehat{H}_{\mu}(k)$, $k\in
\mathbb{T}^{d}$}

Since the perturbation operator $\widehat{V}$ is compact, according
to Weyl's theorem \cite[Theorem
XIII.14]{RSIV}, the essential spectrum $\sigma _{\text{ess}}(\widehat{H}_{\mu}(k))$ of the operator $\widehat{H}_{\mu}(k)$, $%
k\in \mathbb{T}^{d}$ coincides with the spectrum ${\sigma
}(H_{0}(k))$ of the non-perturbed operator $H_{0}(k)$. Explicitly,
one has
\begin{equation*}
\sigma _{\text{ess}}(\widehat{H}_{\mu}(k))=[{\mathcal{E}}_{\min
}(k),{\mathcal{E}}_{\max }(k)]\text{ },
\end{equation*}%
with
\begin{equation*}
{\mathcal{E}}_{\min }(k):=\min_{q\in \mathbb{T}^{d}}{\mathcal{E}}%
_{k}(q),\,\, {\mathcal{E}}_{\max }(k):=\max_{q\in \mathbb{T}^{d}}{\mathcal{E%
}}_{k}(q)\text{ }.
\end{equation*}
From the non-positivity of $\widehat{V}$ and the min-max principle,
it follows that all isolated eigenvalues of finite multiplicity lie below ${%
\mathcal{E}}_{\min }(k)$, the bottom  of the essential spectrum $\sigma _{\text{ess}%
}(\widehat{H}_{\mu}(k))$.
\subsection{Birman-Schwinger principle}
Let $d\geq 1$, $k\in \mathbb{T}^{d}$, and ${\mathcal{E}}_{k}(\cdot
)$ be the {\it quasi-momentum-dependent pair dispersion relation}
and $\hat{v}\in\ell^{1}(\mathbb{Z}^{d};\mathbb{R}_{0}^{-})$. For any
$z <{\mathcal{E}}_{\min }(k)$, we define the Birman-Schwinger
operator $\mathbb{B}_{\mu}(k,z)$, which is a non-negative compact
operator acting in $\ell^{2,e}(\mathbb{Z}^{d})$ as
\begin{equation}
\mathbb{B}_{\mu}(k,z)\ :=\
\mu|\widehat{V}|^{1/2}\,\widehat{R}_{0}(k,z)\,|\widehat{V}|^{1/2}\text{
}. \label{def_B-Sch}
\end{equation}%
Here, $\widehat{R}_{0}(k,z)$,  $z\in \mathbb{C}\setminus \lbrack
{\mathcal{E}}_{\min }(k),{\mathcal{E}}_{\max }(k)]$ is the resolvent
of the operator $\widehat{H}_{0}(k)$ and
$|\widehat{V}|^{\frac{1}{2}}$ is the (unique) positive square root
of the (positive) operator $|\widehat{V}|$:
\begin{equation}
 (|\widehat{V}|^{\frac{1}{2}}\hat{\psi})(x)={|\hat{v}(x)|}^{\frac{1}{2}}\hat{\psi}
(x),\quad \hat{\psi}\in \ell ^{2,e}(\mathbb{Z}^{d})\text{ }.
\label{root_v}
\end{equation}

The kernel function ${\mathcal{B}}_{\mu}(k,z;\cdot ,\cdot )$, $k\in \mathbb{T}^{d}$%
, $z<{\mathcal{E}}_{\min }(k)$, associated to the Birman-Schwinger operator $%
\mathbb{B}_{\mu}(k,z)$ is given by
\begin{equation}
{\mathcal{B}}_{\mu}(k,z;x,y)=\mu{|\hat{v}(x)|}^{\frac{1}{2}}\widehat{\mathcal{R}}_{0}(k,z;x-y)
{|\hat{v}(y)|}^{\frac{1}{2}},\,\text{\ \ }x\,,y\in \mathbb{Z}^{d},
\label{kernel}
\end{equation}%
where
\begin{equation}
\widehat{\mathcal{R}}_{0}(k,z;x):=\int\limits_{\mathbb{T}^{d}}\frac{e^{i(q,x)}}{{\mathcal{E}}%
_{k}(q)-z}\eta (\mathrm{d}q),\,\text{\ \ }x\in \mathbb{Z}^{d}.
\label{fouriertransform}
\end{equation}
Now we recall, for the convenience of the reader, the well known
Birman-Schwinger principle (see, e.g., \cite{Lieb2004}, p.180 and
\cite{Simon1979}, p. 89.),  associated to the Schr\"{o}dinger
operator $\widehat{H}_{\mu}(k)$,  the proof of which can be found in
\cite{BdsPL2018}).

\begin{proposition}{\bf (Birman-Schwinger principle)} \label{BS}
Let $k\in\T^d$ and $z<\cE_{\min }(k)$.
\begin{enumerate}
\item[(i)] If $\hat{f}\in \ell^{2,e}(\Z^d)$ solves $\widehat{H}_{\mu}(k)
\hat{f} = z\hat{f}$, then $\hat{\psi}:=
\mu|\widehat{V}|^{1/2}\hat{f}\in \ell^{2,e} (\Z^d)$ solves
$\hat{\psi}=\mathbb{B}_{\mu}(k,z)\hat{\psi}$.
\item[(ii)] If $\hat{\psi}\in \ell^{2,e} (\Z^d)$ solves
$\hat{\psi} = \mathbb{B}_{\mu}(k,z)\hat{\psi}$, then $\hat{f}:=
\widehat{R}_0(k,z)|\widehat{V}|^{1/2}\hat{\psi} \in
\ell^{2,e}(\Z^d)$ solves $\widehat{H}_{\mu}(k)\hat{f} = z \hat{f}$.
\item[(iii)] The number $z$ is an eigenvalue of
$\widehat{H}_{\mu}(k)$ of multiplicity $m$ if and only if $1$ is an
eigenvalue for \, $\mathbb{B}_{\mu}(k,z)$ of multiplicity $m$.
\item[(iv)]Counting multiplicities, the number $\mathcal{N}_{-}(z,\widehat{H}_{\mu}(k))$ of
eigenvalues of \,$\widehat{H}_{\mu}(k)$ smaller than $z$ equals to
the number \,$\mathcal{N}_{+}(1,\mathbb{B}_{\mu}(k,z))$\, of
eigenvalues of \,$\mathbb{B}_{\mu}(k,z)$\, greater than $1,$
\mbox{i.e.,}
\begin{equation} \label{BS_equality}
\mathcal{N}_{-}(z,\widehat{H}_{\mu}(k))
=\mathcal{N}_{+}(1,\mathbb{B}_{\mu}(k,z)).
\end{equation}
\end{enumerate}
\end{proposition}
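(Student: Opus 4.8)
The plan is to dispatch parts (i)--(iii) by direct algebra and part (iv) by a monotonicity argument. First I would record the structural facts that drive everything: since $\hat v\le 0$, the interaction factorizes as $\mu\widehat V=-\mu|\widehat V|^{1/2}|\widehat V|^{1/2}$; the operator $|\widehat V|^{1/2}$ is bounded and in fact compact, since $|\hat v(x)|^{1/2}\to 0$ as $|x|\to\infty$ because $\hat v\in\ell^1(\Z^d)$; and for $z<\cE_{\min}(k)$ the resolvent $\widehat R_0(k,z)=(\widehat H_0(k)-z)^{-1}$ is bounded, injective and strictly positive, being multiplication by $(\cE_k(q)-z)^{-1}>0$. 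In particular $\mathbb{B}_{\mu}(k,z)$ is non-negative and compact, as asserted.

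For (i) I would start from $\widehat H_{\mu}(k)\hat f=z\hat f$, rewrite it as $(\widehat H_0(k)-z)\hat f=-\mu\widehat V\hat f=\mu|\widehat V|^{1/2}\bigl(|\widehat V|^{1/2}\hat f\bigr)$, apply $\widehat R_0(k,z)$ to get $\hat f=\mu\,\widehat R_0(k,z)|\widehat V|^{1/2}\bigl(|\widehat V|^{1/2}\hat f\bigr)$, and apply $\mu|\widehat V|^{1/2}$ to both sides to arrive at $\hat\psi=\mathbb{B}_{\mu}(k,z)\hat\psi$ with $\hat\psi:=\mu|\widehat V|^{1/2}\hat f$. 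For (ii), given $\hat\psi=\mathbb{B}_{\mu}(k,z)\hat\psi$ I would set $\hat f:=\widehat R_0(k,z)|\widehat V|^{1/2}\hat\psi$, note that $|\widehat V|^{1/2}\hat f=\mu^{-1}\mathbb{B}_{\mu}(k,z)\hat\psi=\mu^{-1}\hat\psi$, and read off $(\widehat H_0(k)-z)\hat f=|\widehat V|^{1/2}\hat\psi=\mu|\widehat V|^{1/2}\bigl(|\widehat V|^{1/2}\hat f\bigr)=-\mu\widehat V\hat f$, i.e., $\widehat H_{\mu}(k)\hat f=z\hat f$. For (iii) I would observe that the two maps just used, $\hat f\mapsto\mu|\widehat V|^{1/2}\hat f$ and $\hat\psi\mapsto\widehat R_0(k,z)|\widehat V|^{1/2}\hat\psi$, are mutually inverse bijections between $\Ker(\widehat H_{\mu}(k)-z)$ and $\Ker(\mathbb{B}_{\mu}(k,z)-\id)$: one composition equals $\widehat R_0(k,z)\,\mu|\widehat V|\hat f=\widehat R_0(k,z)(\widehat H_0(k)-z)\hat f=\hat f$, the other equals $\mathbb{B}_{\mu}(k,z)\hat\psi=\hat\psi$; and neither annihilates a nonzero vector, since $|\widehat V|^{1/2}\hat f=0$ would force $\widehat H_0(k)\hat f=z\hat f$ (impossible for $z<\cE_{\min}(k)=\min\sigma(\widehat H_0(k))$) and $\widehat R_0(k,z)$ is injective. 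Hence the eigenspaces are isomorphic and their dimensions coincide.

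For (iv) I would fix $z<\cE_{\min}(k)$ and vary the coupling. By Weyl's theorem the essential spectrum of $\widehat H_0(k)-t|\widehat V|$ equals $[\cE_{\min}(k),\cE_{\max}(k)]$ for every $t\ge 0$, so below $z$ there are only finitely many eigenvalues $\lambda_1(t)\le\lambda_2(t)\le\cdots$, which by the min--max principle are continuous and non-increasing in $t$ with $\lambda_n(0^+)\ge\cE_{\min}(k)>z$. Applying the equivalence (iii) to $\widehat H_0(k)-t|\widehat V|$ in place of $\widehat H_{\mu}(k)$ shows that $z$ is an eigenvalue of $\widehat H_0(k)-t|\widehat V|$ of multiplicity $m$ exactly when $\mu/t$ is an eigenvalue of $\mathbb{B}_{\mu}(k,z)$ of multiplicity $m$. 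I would then track the branches $\lambda_n(\cdot)$ as $t$ runs from $0$ to $\mu$: a branch is strictly below $z$ at $t=\mu$ if and only if it crossed the level $z$ at some $t_n\in(0,\mu)$, each such crossing contributing an eigenvalue $\mu/t_n>1$ of $\mathbb{B}_{\mu}(k,z)$, and conversely every eigenvalue $\nu>1$ of $\mathbb{B}_{\mu}(k,z)$ arises this way with $t_n=\mu/\nu$. Strictness of the crossing---that no branch sits at the value $z$ on an interval of $t$---follows because such a branch would carry an eigenfunction $\hat f$ with $\langle\hat f,|\widehat V|\hat f\rangle=0$, hence $\widehat H_0(k)\hat f=z\hat f$, again impossible. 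Counting with multiplicities yields $\mathcal{N}_-(z,\widehat H_{\mu}(k))=\mathcal{N}_+(1,\mathbb{B}_{\mu}(k,z))$. An equivalent route keeps $\mu$ fixed and uses the monotonicity of $\mathbb{B}_{\mu}(k,z)$ in $z$, whose derivative in $z$ equals $\mu|\widehat V|^{1/2}(\widehat H_0(k)-z)^{-2}|\widehat V|^{1/2}>0$.

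Parts (i)--(iii) are essentially bookkeeping. The main obstacle is (iv): organizing the eigenvalue branches of the one-parameter family---in the coupling $t$, or equivalently in the energy $z$---so that multiplicities are matched correctly under continuous (or analytic) perturbation, and ruling out tangential contact of a branch with the critical level. It is exactly the strict positivity of $\widehat R_0(k,z)$, i.e., the hypothesis $z<\cE_{\min}(k)=\min\sigma(\widehat H_0(k))$, that excludes those degenerate configurations.
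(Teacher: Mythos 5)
Your argument for parts (i)--(iii) is the standard algebraic dictionary between $\Ker(\widehat H_{\mu}(k)-z)$ and $\Ker(\mathbb{B}_{\mu}(k,z)-I)$, and it is correct: the factorization $\mu\widehat V=-\mu|\widehat V|^{1/2}|\widehat V|^{1/2}$, the invertibility of $\widehat H_0(k)-z$ for $z<\cE_{\min}(k)$, and the observation that $|\widehat V|^{1/2}$ does not annihilate any nonzero eigenvector all go through. For (iv) your coupling-constant monotonicity argument is the classical Birman--Schwinger route and it is also essentially correct: the identification $\mathbb{B}_t(k,z)=(t/\mu)\mathbb{B}_{\mu}(k,z)$ reduces the crossing set to the eigenvalue set of $\mathbb{B}_{\mu}(k,z)$ above $1$, eigenvalues can only emerge from the essential-spectrum threshold $\cE_{\min}(k)>z$, and the non-tangency follows from Feynman--Hellmann plus the positivity of $|\widehat V|$ on those eigenvectors.

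Note, however, that the paper does \emph{not} prove this proposition; it states it as a recollection and refers the reader to the companion paper for the proof. Since I therefore cannot compare your argument to the paper's own, let me instead record that there is a slicker and more commonly used route for (iv) which avoids tracking branches and worrying about the regularity of eigenvalue curves: one writes, for $z<\cE_{\min}(k)$,
\[
\widehat H_{\mu}(k)-z \;=\;(\widehat H_0(k)-z)^{1/2}\Bigl[I-\mu(\widehat H_0(k)-z)^{-1/2}|\widehat V|(\widehat H_0(k)-z)^{-1/2}\Bigr](\widehat H_0(k)-z)^{1/2},
\]
so that by Sylvester's law of inertia the number of negative eigenvalues of $\widehat H_{\mu}(k)-z$ equals the number of eigenvalues of $\mu(\widehat H_0(k)-z)^{-1/2}|\widehat V|(\widehat H_0(k)-z)^{-1/2}$ exceeding $1$, and this middle operator has the same nonzero spectrum (with multiplicities) as $\mathbb{B}_{\mu}(k,z)=\mu|\widehat V|^{1/2}(\widehat H_0(k)-z)^{-1}|\widehat V|^{1/2}$ because $AB$ and $BA$ share nonzero spectrum. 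This gives \eqref{BS_equality} in one step, without invoking analyticity or continuity of eigenvalue branches. Your approach buys a more geometric picture of the coupling-constant thresholds; the congruence approach buys brevity and avoids the need to rule out tangential crossings. Either is acceptable.

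One small gap you should close if you keep your version of (iv): you assert that the branches $\lambda_n(t)$ are continuous and non-increasing by min--max, which is fine, but when you count crossings with multiplicity you implicitly need that at a crossing of multiplicity $m$ exactly $m$ branches pass strictly through $z$. This follows from your strictness argument (no branch constant at $z$) together with the non-increasing property, but that deduction should be stated explicitly, since otherwise one could worry that some branches touch $z$ and bounce back -- something ruled out here only because the perturbation is one-signed.
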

\begin{lemma}\label{kernel_BS}
Let $\hat{v} \in \ell^1(\Z^d; \R_0^-)$ be a non-positive function.
Then for any $x,y\in\Z^d$, the function $\cB_{\mu}(k,\cdot;x,y)$ is
analytic in $z\in \mathbb{C}\setminus \lbrack {\mathcal{E}}_{\min
}(k),{\mathcal{E}}_{\max }(k)]$.
\end{lemma}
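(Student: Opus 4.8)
The plan is to reduce the statement to an analyticity property of the resolvent kernel $z \mapsto \widehat{\cR}_0(k,z;x)$ from \eqref{fouriertransform}. Indeed, by \eqref{kernel} one has $\cB_{\mu}(k,z;x,y) = \mu\,|\hat v(x)|^{1/2}\,\widehat{\cR}_0(k,z;x-y)\,|\hat v(y)|^{1/2}$, where the prefactor does not depend on $z$; hence it suffices to show that $z \mapsto \widehat{\cR}_0(k,z;x)$ is analytic on $\C \setminus [\cE_{\min}(k),\cE_{\max}(k)]$ for each fixed $x \in \Z^d$.

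First I would note that, since $\hat\varepsilon \in \ell^1(\Z^d)$, the function $\cE_k$ defined in \eqref{free_two-particle} is continuous on the compact torus $\T^d$, so that $\cE_k(\T^d) \subseteq [\cE_{\min}(k),\cE_{\max}(k)]$. Therefore, for any $z_0 \in \C \setminus [\cE_{\min}(k),\cE_{\max}(k)]$ the quantity $\delta := \dist\big(z_0,[\cE_{\min}(k),\cE_{\max}(k)]\big)$ is strictly positive, and
\[
|\cE_k(q)-z| \;\ge\; \delta - |z-z_0| \qquad \text{for all } q\in\T^d,\ z\in\C .
\]

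Next I would produce an explicit convergent power series at $z_0$. For $|z-z_0| < \delta$ and $q\in\T^d$, the geometric expansion
\[
\frac{1}{\cE_k(q)-z} \;=\; \frac{1}{(\cE_k(q)-z_0)-(z-z_0)} \;=\; \sum_{n=0}^{\infty}\frac{(z-z_0)^n}{(\cE_k(q)-z_0)^{n+1}}
\]
holds, and since $|\cE_k(q)-z_0|\ge\delta$ its $n$-th term is bounded by $\delta^{-1}(|z-z_0|/\delta)^n$ uniformly in $q$; the series thus converges absolutely and uniformly on $\T^d$, a set of finite $\eta$-measure. Multiplying by $e^{\mathrm{i}(q,x)}$ and integrating term by term is then legitimate and gives
\[
\widehat{\cR}_0(k,z;x) \;=\; \sum_{n=0}^{\infty} c_n(x)\,(z-z_0)^n,\qquad c_n(x) := \int_{\T^d}\frac{e^{\mathrm{i}(q,x)}}{(\cE_k(q)-z_0)^{n+1}}\,\eta(\mathrm{d}q),
\]
with $|c_n(x)|\le\delta^{-(n+1)}$, so the right-hand side is a power series in $z-z_0$ of radius of convergence at least $\delta$. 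Hence $\widehat{\cR}_0(k,\cdot;x)$ is analytic near $z_0$, and since $z_0$ was an arbitrary point of $\C\setminus[\cE_{\min}(k),\cE_{\max}(k)]$ this proves the lemma. Alternatively, one may quote the standard theorem on holomorphy of parameter integrals, using the locally uniform bound $\big|e^{\mathrm{i}(q,x)}/(\cE_k(q)-z)\big| \le (\delta-|z-z_0|)^{-1}$ together with the holomorphy of $z\mapsto(\cE_k(q)-z)^{-1}$ off $\{\cE_k(q)\}$ for each fixed $q$. There is no serious obstacle here; the only point needing care is the uniform-in-$q$ lower bound on $|\cE_k(q)-z|$, which is precisely what compactness of $\T^d$ and continuity of $\cE_k$ deliver.
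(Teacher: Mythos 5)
Your proof is correct and uses the same reduction as the paper: factor $\cB_{\mu}(k,z;x,y)$ via \eqref{kernel} into a $z$-independent prefactor times $\widehat{\cR}_0(k,z;x-y)$ and appeal to analyticity of the latter. The paper simply asserts that analyticity without proof, whereas you supply the missing justification (uniform lower bound on $|\cE_k(q)-z|$ by compactness of $\T^d$, then a convergent power series in $z-z_0$ obtained by term-by-term integration); this is the natural way to make the paper's one-line argument rigorous.
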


\begin{proof}
The equality \eqref{kernel} and the analyticity of the function
$\widehat{\cR}_0(k,z;x)$ in $z\in \mathbb{C}\setminus \lbrack
{\mathcal{E}}_{\min }(k),{\mathcal{E}}_{\max }(k)]$ yield the proof
of Lemma \ref{kernel_BS}.
\end{proof}

\subsection{A generalization of the Birman-Schwinger principle}

In what follows we assume the following Hypotheses \ref{hypothesis}
on ${\varepsilon}(\cdot )$ and  $\hat{v}$.

\begin{hypothesis}\label{hypothesis}
(i) The function ${\varepsilon}(\cdot )\in C^{(3)}(%
\mathbb{T}^{d}),\,d\ge1$ is a real--valued even  Morse function and
has a unique minimum at $0\in\T^d$.

(ii) The function $\hat{v}$ is absolutely summable and non-positive,
i.e., $\hat{v}\in \ell
^{1}(\mathbb{Z}^{d};\mathbb{R}_{0}^{-}),\,d\ge1$.
\end{hypothesis}
We need these assumptions in order to introduce a
\emph{generalization} of the Birman--Schwinger operator and the
concept of a \textit{threshold resonance (virtual level)} and a
\textit{threshold eigenvalue} for the Schr\"{o}dinger operator
$\widehat{H}_{\mu}(k)$, $k\in \mathbb{T}^{d}$. By
\emph{generalization}, we mean that the definition of the
Birman--Schwinger operator $\mathbb{B}_{\mu}(k,z),z
<{\mathcal{E}}_{\min }(k)$ is extended to the case $z
={\mathcal{E}}_{\min }(k)$, the bottom of the
essential spectrum of $\widehat{H}_{\mu}(k)$. We recall that such a generalization is only meaningful for $%
d\geq 3$.

In addition, in this subsection we give some properties  of the
\emph{generalized Birman-Schwinger operator} (Lemma \ref{kernelBS}
and Theorem \ref{General_BS}), which will be proved in Section 4.

The parametrical Morse lemma (see, e.g., \cite{Fedoryuk1987}, p.
113) yields the following lemma.
\begin{lemma}\label{region}
(i) The minimum ${\varepsilon}_{\min}=\min\limits_{p\in \mathbb{T}^{d}}%
{\varepsilon}(p)$ is attained at the point $p=0\in \mathbb{T}^{d}$.

(ii) There is a maximal open neighborhood $\mathbb{G%
}\subset \mathbb{T}^{d}$ containing the point $0\in \mathbb{T}^{d}$
such that, for any $k\in \mathbb{G}$, the pair dispersion relation
${\mathcal{E}}_{k}(\cdot )\in
C^{(3)}(\mathbb{T}^{d})$ is a Morse function and  for each $k\in \mathbb{%
G}$,\,  the minimum ${\mathcal{E}}_{\min }(k)=\min\limits_{q\in
\mathbb{T}^{d}}{\mathcal{E}}_{k}(q)$ is attained at the point $%
p(k)=0\in \mathbb{T}^{d}$.
\end{lemma}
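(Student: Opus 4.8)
The plan is to read off (i) directly from the hypotheses and to establish (ii) by combining an algebraic symmetry observation with the parametric Morse lemma and a compactness argument, taking $\mathbb{G}$ at the end to be the union of all neighborhoods of $0$ on which the assertions hold. Part (i) is immediate: by Hypothesis~\ref{hypothesis}(i) the Morse function $\varepsilon$ has a unique minimum, located at $0\in\mathbb{T}^{d}$, so $\varepsilon_{\min}=\varepsilon(0)$ and there is nothing further to prove.

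For part (ii) the first point is structural: since $\varepsilon\in C^{(3)}(\mathbb{T}^{d})$ is even, the function $p\mapsto\mathcal{E}_{k}(p)=\varepsilon(k/2+p)+\varepsilon(k/2-p)$ is even and $C^{(3)}$ for every $k$, and differentiating gives $\nabla_{p}\mathcal{E}_{k}(0)=\nabla\varepsilon(k/2)-\nabla\varepsilon(k/2)=0$; hence $p=0$ is a critical point of $\mathcal{E}_{k}$ for \emph{every} $k\in\mathbb{T}^{d}$. Its Hessian there is $2\,\mathrm{Hess}\,\varepsilon(k/2)$, which at $k=0$ equals $2\,\mathrm{Hess}\,\varepsilon(0)$ and is positive definite because $0$ is a nondegenerate minimum of $\varepsilon$. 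Now $k\mapsto\mathcal{E}_{k}$ is continuous from $\mathbb{T}^{d}$ into $C^{(3)}(\mathbb{T}^{d})$ (again because $\varepsilon\in C^{(3)}$) and $\mathcal{E}_{0}=2\varepsilon$ is a Morse function, so two stability facts apply on a small closed ball $\overline{B_{\delta}(0)}$: the set of Morse functions is open in $C^{(2)}(\mathbb{T}^{d})$, so $\mathcal{E}_{k}$ remains Morse; and the parametric Morse lemma \cite{Fedoryuk1987}, applied at the nondegenerate critical point $p=0$ of $\mathcal{E}_{0}$ with $k$ as parameter, shows that $p=0$ stays a nondegenerate local minimum of $\mathcal{E}_{k}$, uniformly in $k$, i.e.\ there exist $c,\rho>0$ with $\mathcal{E}_{k}(q)\ge\mathcal{E}_{k}(0)+c|q|^{2}$ for all $|q|\le\rho$ and all $k\in\overline{B_{\delta}(0)}$, and that $\mathcal{E}_{k}$ has no other critical point with $|q|\le\rho$.

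It then remains to upgrade this local minimum to the global one. At $k=0$ the global minimum of $\mathcal{E}_{0}=2\varepsilon$ is attained only at $p=0$, so on the compact set $\{q:|q|\ge\rho\}$ one has $\mathcal{E}_{0}(q)\ge\mathcal{E}_{0}(0)+2\kappa$ for some $\kappa>0$; by joint continuity of $(k,q)\mapsto\mathcal{E}_{k}(q)$ and of $k\mapsto\mathcal{E}_{k}(0)$, this persists after shrinking $\delta$ as $\mathcal{E}_{k}(q)>\mathcal{E}_{k}(0)$ for all $|q|\ge\rho$ and $k\in\overline{B_{\delta}(0)}$. Combined with the uniform local lower bound, for such $k$ the value $\mathcal{E}_{k}(0)$ is the strict global minimum of $\mathcal{E}_{k}$, so $\mathcal{E}_{\min}(k)=\mathcal{E}_{k}(0)$ and $p(k)=0$. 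Finally, let $\mathbb{G}$ be the union of all open neighborhoods $U\ni0$ such that $\mathcal{E}_{k}\in C^{(3)}(\mathbb{T}^{d})$ is a Morse function and $\mathcal{E}_{\min}(k)=\mathcal{E}_{k}(0)$ for every $k\in U$: this set is open, nonempty by the construction above, inherits the defining property (which is a pointwise condition on $k$), and is by construction the maximal neighborhood with that property.

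The one step that is not a routine citation is the passage from a local to a global minimum at $p=0$: it requires combining the \emph{uniform}-in-$k$ lower bound near $p=0$ furnished by the parametric Morse lemma with a compactness estimate away from $p=0$, and it uses the \emph{uniqueness} of the minimum of $\varepsilon$ in an essential way — were $\varepsilon$ to have a second minimum, that branch could dip below $\mathcal{E}_{k}(0)$ for arbitrarily small $k\neq0$, and $\mathbb{G}$ could degenerate to $\{0\}$. I expect this to be the main obstacle; everything else is continuity of the family $k\mapsto\mathcal{E}_{k}$ and standard openness of the Morse condition on the compact manifold $\mathbb{T}^{d}$.
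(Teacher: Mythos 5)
Your proof is correct and follows exactly the route the paper indicates: the paper does not write out a proof of Lemma~\ref{region} but simply flags the parametric Morse lemma of \cite{Fedoryuk1987} as the key tool and refers to \cite[Lemma~3]{Lakaev1992} for details, and your argument is a careful reconstruction of that — evenness forces $p=0$ to be a critical point for all $k$, the Hessian $2\,\mathrm{Hess}\,\varepsilon(k/2)$ is positive definite near $k=0$, openness of the Morse condition and the parametric Morse lemma give the uniform local quadratic lower bound, compactness of $\{|q|\ge\rho\}$ together with uniqueness of the minimum of $\varepsilon$ handles the rest, and the union over all admissible open neighborhoods of $0$ yields the maximal $\mathbb{G}$. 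Nothing essential is missing, and your closing caveat about uniqueness of the minimum being used essentially is exactly where Hypothesis~\ref{hypothesis}(i) enters.
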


The proof of Lemma \ref{region} can be found in \cite[Lemma
3]{Lakaev1992}.

\begin{remark}\label{conditionallly}
The following subclass of the one-particle systems is of certain
interest (see, e.g., \cite{CL1990}). It is introduced by the
additional requirement that the dispersion relation
${\varepsilon}(p)$ is a real-valued continuous conditionally
negative definite function and hence
  \begin{itemize}
  \item[(i)] ${\varepsilon} $ is an even function,
  \item[(ii)] ${\varepsilon}(p)$ has a minimum at $p=0$.
  \end{itemize}
Recall (see, e.g., \cite{ALMM2006}, \cite{RSIV}) that a
complex-valued bounded function ${\varepsilon}:\T^d\longrightarrow
\C$ is called conditionally negative definite if
${\varepsilon}(p)=\overline{{\varepsilon}(-p)}$  and
\begin{equation}\label{nn}
  \sum_{i,j=1}^{n}{\varepsilon}(p_i-p_j)z_i\bar z_j\le 0
 \end{equation}
for any $n\in \N$, $p_1, p_2, .., p_n\in \T^d$ and for all ${\bf
z}=(z_1, z_2, ..., z_n)\in \C^n$ satisfying $\sum_{i=1}^nz_i=0$.
\end{remark}

\begin{definition}\label{remark_2}
Assume $d\geq 3$ and  Hypothesis \ref {hypothesis}. Let
\begin{equation}
\widehat{\mathcal{R}}_{0}(k,{\mathcal{E}}_{\min }(k);x):=\int\limits_{\mathbb{T}^{d}}\frac{%
e^{i(q, x)}}{{\mathcal{E}}_{k}(q)-{\mathcal{E}}_{\min }(k)}\eta (\mathrm{d}%
q),\, x\in \mathbb{Z}^{d}.  \label{resolvent}
\end{equation}
For $k\in \mathbb{G}%
\subset \mathbb{T}^{d}$, we define the generalized Birman-Schwinger operator $\mathbb{B}_{\mu}(k,{\mathcal{E}}%
_{\min}(k))$ by means of the kernel function
\begin{equation}
{\mathcal{B}}_{\mu}(k,{\mathcal{E}}_{\min }(k);x,y):=\mu |\hat{v}(x)|^{\frac{1}{2}}\widehat{\mathcal{R}}_{0}(k,{%
\mathcal{E}}_{\min }(k);x-y)|\hat{v}(y)|^{\frac{1}{2}},\,x\,,y\in
\mathbb{Z}^{d} \label{limkernel_function}
\end{equation}%
as follows
\begin{equation}
(\mathbb{B}_{\mu}(k,{\mathcal{E}}_{\min}(k))\hat{f})(x)=\sum\limits_{y\in\Z^d}{\mathcal{B}}_{\mu}(k,{\mathcal{E}}_{\min
}(k);x,y)\hat{f}(y),\,\, \hat{f}\in\ell^{2,e}({\Z}^d).
\end{equation}
\end{definition}
Now we formulate some properties of the \emph{generalized
Birman-Schwinger operator}.
\begin{lemma}\label{kernelBS}
Let $d\geq3$ and  assume Hypothesis \ref{hypothesis}.
\begin{enumerate}
\item[(i)] For any  $x\in \Z^d$, the kernel function $\widehat{\mathcal{R}}_0(\cdot,\cE_{\min }(\cdot);x)$
is continuous in $k\in \mathbb{G}$.

\item[(ii)] For any  $x,y \in \Z^d$, the kernel function $\cB_{\mu}(\cdot,\cE_{\min }(\cdot);x,y)$
is continuous in $k\in \mathbb{G}$.

\item[(iii)] For any $k\in \mathbb{G}$, the operator
$\mathbb{B}_{\mu}(k,\cE_{\min}(k))$, acting in $\ell^{2,e}({\Z}^d)$,
belongs to the Hilbert-Schmidt class and the map  $k \in
\mathbb{G}\mapsto \mathbb{B}_{\mu}(k,\cE_{\min}(k))$ is continuous.
\end{enumerate}
\end{lemma}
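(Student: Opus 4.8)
The plan is to reduce all three claims to one analytic input: a \emph{uniform quadratic lower bound} for the pair dispersion relation near its minimum. Fix $k_0\in\mathbb{G}$ and a compact neighbourhood $\mathcal{K}\subset\mathbb{G}$ of $k_0$; I would first establish that there are constants $\delta,C,c_0>0$ with
\begin{equation}\label{eq:LB}
\cE_k(q)-\cE_{\min}(k) \ge C\,|q|^2 \quad (|q|\le\delta),\qquad \cE_k(q)-\cE_{\min}(k) \ge c_0 \quad (|q|\ge\delta),
\end{equation}
for every $k\in\mathcal{K}$. For the first bound I would use that $\cE_k(q)=\varepsilon(\tfrac{k}{2}+q)+\varepsilon(\tfrac{k}{2}-q)$ is $C^{(3)}$ in $q$ with derivatives jointly continuous in $(k,q)$, that $\nabla_q\cE_k(0)=0$ and $\cE_k(0)=\cE_{\min}(k)$ by evenness and Lemma \ref{region}, and that the Hessian $\mathrm{Hess}_q\cE_k(0)=2\,\mathrm{Hess}\,\varepsilon(\tfrac{k}{2})$ is positive definite by the Morse property (Lemma \ref{region}), with lowest eigenvalue bounded below uniformly on the compact set $\mathcal{K}$; a third-order Taylor expansion whose remainder is controlled uniformly over $\mathcal{K}$ (here the regularity $\varepsilon\in C^{(3)}$ is used) then gives the estimate on a sufficiently small ball. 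The second bound follows from compactness of $\mathcal{K}\times\{|q|\ge\delta\}$ together with positivity of $\cE_k(q)-\cE_{\min}(k)$ for $q\ne0$, the minimum being attained only at $q=0$ (Lemma \ref{region}). The point of \eqref{eq:LB} is that it makes $q\mapsto(\cE_k(q)-\cE_{\min}(k))^{-1}$ an element of $L^1(\T^d)$, with $L^1$-norm bounded uniformly on $\mathcal{K}$; this is precisely where $d\ge3$ enters, since $|q|^{-2}$ must be integrable near the origin, consistent with the remark that the generalized operator is meaningful only for $d\ge3$. In particular it guarantees that the integral in \eqref{resolvent} converges absolutely.

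Granting \eqref{eq:LB}, part (i) follows by dominated convergence in \eqref{resolvent}. As $k\to k_0$ within $\mathcal{K}$, the integrand $e^{i(q,x)}(\cE_k(q)-\cE_{\min}(k))^{-1}$ converges pointwise on $\T^d\setminus\{0\}$ to the corresponding integrand at $k_0$ --- using continuity of $\cE_{\min}(\cdot)$, which is itself the minimum over the compact torus of the jointly continuous function $\cE_{\cdot}(\cdot)$ --- and by \eqref{eq:LB} it is dominated, uniformly in $k\in\mathcal{K}$, by the integrable majorant $C^{-1}|q|^{-2}\mathbf{1}_{\{|q|\le\delta\}}+c_0^{-1}\mathbf{1}_{\{|q|>\delta\}}$. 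Hence $\widehat{\mathcal{R}}_0(k,\cE_{\min}(k);x)\to\widehat{\mathcal{R}}_0(k_0,\cE_{\min}(k_0);x)$, and since $k_0\in\mathbb{G}$ is arbitrary, (i) follows. Part (ii) is then immediate: by \eqref{limkernel_function} the entry $\cB_\mu(k,\cE_{\min}(k);x,y)$ is the $k$-independent number $\mu|\hat v(x)|^{1/2}|\hat v(y)|^{1/2}$ times the $k$-continuous function $\widehat{\mathcal{R}}_0(k,\cE_{\min}(k);x-y)$.

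For part (iii), write $\mathbb{B}_\mu(k)\equiv\mathbb{B}_\mu(k,\cE_{\min}(k))$, put $r_k(m):=\widehat{\mathcal{R}}_0(k,\cE_{\min}(k);m)$ for $m\in\Z^d$, and $R(k):=\int_{\T^d}(\cE_k(q)-\cE_{\min}(k))^{-1}\eta(\mathrm{d}q)$. Since the integrand lies in $L^1(\T^d)$ by \eqref{eq:LB}, one has $|r_k(m)|\le R(k)$ for all $m$ (Fourier coefficients of an $L^1$ function), so that
\begin{equation}\label{eq:HS}
\begin{split}
\|\mathbb{B}_\mu(k)\|_{\mathrm{HS}}^2 &= \mu^2\sum_{x,y\in\Z^d}|\hat v(x)|\,|r_k(x-y)|^2\,|\hat v(y)| \\
&\le \mu^2 R(k)^2\,\|\hat v\|_{\ell^1}^2 < \infty,
\end{split}
\end{equation}
using $\sum_{x,y\in\Z^d}|\hat v(x)|\,|\hat v(y)|=\|\hat v\|_{\ell^1}^2$; thus $\mathbb{B}_\mu(k)$ is Hilbert--Schmidt, and in particular compact, for every $k\in\mathbb{G}$. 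For the continuity of $k\mapsto\mathbb{B}_\mu(k)$ it suffices to argue in Hilbert--Schmidt norm, which dominates the operator norm. Expanding as in \eqref{eq:HS}, one finds $\|\mathbb{B}_\mu(k)-\mathbb{B}_\mu(k_0)\|_{\mathrm{HS}}^2=\mu^2\sum_{x,y\in\Z^d}|\hat v(x)|\,|\hat v(y)|\,|r_k(x-y)-r_{k_0}(x-y)|^2$; by part (i) each summand tends to $0$ as $k\to k_0$, and --- since $\sup_{k\in\mathcal{K}}R(k)=:\tilde R<\infty$ by \eqref{eq:LB} --- each summand is dominated by $4\tilde R^2\mu^2|\hat v(x)|\,|\hat v(y)|$, which is summable over $\Z^d\times\Z^d$. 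The dominated convergence theorem for series then gives $\mathbb{B}_\mu(k)\to\mathbb{B}_\mu(k_0)$ in Hilbert--Schmidt norm, and since $k_0$ was arbitrary, (iii) follows.

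The single genuine obstacle here is the uniform estimate \eqref{eq:LB}; the rest is routine dominated-convergence bookkeeping. Establishing \eqref{eq:LB} amounts to upgrading the parametric Morse lemma (Lemma \ref{region} and \cite{Fedoryuk1987}) to a Taylor estimate whose remainder is controlled uniformly for $k$ in compact subsets of $\mathbb{G}$, which is where the hypotheses $\varepsilon\in C^{(3)}$ and the uniform positive-definiteness of $\mathrm{Hess}\,\varepsilon(k/2)$ on such sets are needed.
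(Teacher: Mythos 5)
Your proof is correct and reaches the same conclusions, but via a somewhat different technical route than the paper. The paper's argument first splits $\widehat{\cR}_0$ into a contribution near the minimum and one away from it, then invokes the parametric Morse lemma to produce a $C^{(1)}$-diffeomorphism $\phi(k,\cdot)$ with $\cE_k(\phi(k,q)) = \cE_{\min}(k) + |q|^2$, passes to spheroidal coordinates $q = r\omega$, and reads off integrability from the factor $r^{d-3}$; the continuity in $k$ is then inherited from joint continuity of $\phi$ and its Jacobian. You bypass the change of variables entirely: you extract from the Morse property a uniform quadratic lower bound $\cE_k(q)-\cE_{\min}(k)\ge C|q|^2$ on a neighbourhood of $0$ for $k$ in a compact subset of $\mathbb{G}$ (plus the trivial uniform bound away from $0$), and then everything --- integrability for $d\geq 3$, continuity of $\widehat{\cR}_0$, and Hilbert--Schmidt continuity of $\mathbb{B}_\mu$ --- follows by dominated convergence with a $k$-independent majorant. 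Both arguments make the same essential use of $\varepsilon\in C^{(3)}$ and the nondegeneracy of the Hessian on compacts: in the paper to guarantee the regularity of the Morse diffeomorphism and its Jacobian, in your version to bound the third-order Taylor remainder uniformly. Your route is arguably leaner (no explicit change of variables or spheroidal coordinates), while the paper's makes the $d\geq 3$ threshold more explicit through the $r^{d-3}$ factor; the Hilbert--Schmidt estimate in part (iii) is the same in both. One small thing you might state explicitly: the fact $\cE_{\min}(k)=\cE_k(0)$ and $\nabla_q\cE_k(0)=0$ follows from evenness of $\cE_k$ together with Lemma \ref{region}, and it is what anchors the Taylor expansion at $q=0$; you cite the lemma but the chain of implications is worth making visible.
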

\begin{remark}
Lemma \ref {kernelBS} yields that for each $d\geq3$ the kernel
function $\cB_{\mu}(k,\cE_{\min }(k);x,y)$ defines a
\text{generalized Birman-Schwinger operator}
$\mathbb{B}_{\mu}(k,\cE_{\min }(k))$ on $\ell^{2,e}(\Z^d),$ which is
non-negative, compact and hence self-adjoint Hilbert-Schmidt
operator.
\end{remark}
Let $\ell_{0}(\Z^d)$ be the Banach space of all functions defined on
$\Z^d$ and  vanishing at infinity.

\subsection{Regular or singular point}

Since for each $k \in \mathbb G$ the operator
$\mathbb{B}_{\mu}(k,\cE_{\min }(k))$ is compact (self-adjoint), only
one of the following two cases may happen
\begin{enumerate}
\item[(i)] The number $1$ is an eigenvalue for $\mathbb{B}_{\mu}(k,\cE_{\min
}(k))$.

\item[(ii)] The number $1$ is not an eigenvalue for $\mathbb{B}_{\mu}(k,\cE_{\min
}(k))$.
\end{enumerate}
\begin{definition}\label{def_singular_point}
Let $d\geq3.$ For each $k\in \mathbb G$ the threshold
$z=\cE_{\min}(k)$ of the essential spectrum
$\sigma_{\text{ess}}(\widehat{H}_{\mu}(k))$ is called a singular
point of multiplicity $n$ (resp. regular point) of the operator
$\widehat{H}_{\mu}(k)$, if the number $1$ is an eigenvalue of
multiplicity $n$ (resp. not an eigenvalue) for the operator
$\mathbb{B}_{\mu}(k,\cE_{\min}(k))$.
\end{definition}

\begin{remark}\label{def_regular_point}
If the threshold $\cE_{\min}(k)$ is a regular point of the
essential spectrum of $\widehat{H}_{\mu}(k)$, then Theorem \ref{General_BS} %
yields that the equation $\widehat{H}_{\mu}(k)\hat{f}=
\cE_{\min}(k)\hat{f},\hat{f} \in \ell^{2,e}(\Z^d)$ has only the
trivial solution and the number of eigenvalues of the operator
$\widehat{H}_{\mu}(k)$ below the threshold $\cE_{\min}(k)$ remains
unchanged under small perturbations of $k\in \mathbb G$ and $\mu
\widehat{V}$ (see Theorem \ref{regular_point}).
\end{remark}

\begin{remark}\label{virtual_eigen} Notice that the threshold singular point is either
a threshold resonance or a threshold eigenvalue of
$\widehat{H}_{\mu}(k)$ and our definition of the threshold resonance
is the direct analogue of those, which have been introduced in the
continuous and lattice cases (see, e.g., \cite{ALMM2006},
\cite{Sobolev1993} and references therein).
\end{remark}

In the following theorem the Birman--Schwinger principle is extended
to $z=\cE_{\min}(k)$.

\begin{theorem} \label{General_BS}
Assume Hypothesis \ref{hypothesis}. Then the following statements
hold:

\begin{enumerate}
\item[(i)] Let $d\geq3.$ If $\hat{f}\in \ell_{0}(\Z^d)$ solves $\widehat{H}_{\mu}(k) \hat{f} = \cE_{\min }(k)
\hat{f}$, \,then $\hat{\psi}:=|\widehat{V}|^{1/2}\hat{f} \in
\ell^{2,e} (\Z^d)$ solves $\hat{\psi} = \mathbb{B}_{\mu}(k,\cE_{\min
}(k))\hat{\psi}$.

\item[(ii)] Let $d=3 \,,\,4.$ If $\hat{\psi} \in \ell^{2,e}(\Z^d)$ solves $\hat{\psi} = \mathbb{B}_{\mu}(k,\cE_{\min }(k))
\hat{\psi}$,
\,then\\
 $\hat{f}:= \widehat{R}_0(k, \cE_{\min }(k)) |\widehat{V}|^{1/2}\hat{\psi} \in
\ell_{0}(\Z^d)$ solves $\widehat{H}_{\mu}(k)\hat{f} =\cE_{\min
}(k)\hat{f}$.

\item[(iii)] Let $d\geq5.$ If $\hat{\psi} \in \ell^{2,e}
(\Z^d)$ solves $\hat{\psi} = \mathbb{B}_{\mu}(k,\cE_{\min }(k)) \hat{\psi}$, \,then\\
$\hat{f}:= \widehat{R}_0(k, \cE_{\min }(k))
|\widehat{V}|^{1/2}\hat{\psi} \in \ell^{2,e}(\Z^d)$ solves
$\widehat{H}_{\mu}(k)\hat{f}=\cE_{\min }(k)\hat{f}$.

\item[(iv)]  Let $d\geq3$. The threshold $\cE_{\min }(k)$ is a singular point
(a resonance or an eigenvalue) of the essential spectrum of
$\widehat{H}_{\mu}(k)$ with multiplicity $m$ if and only if the
number $1$ is an eigenvalue of $\mathbb{B}_{\mu}(k,\cE_{\min }(k))$
with multiplicity $m$.

\item[(v)] Let $d\geq3.$ Counting multiplicities, the number
 of eigenvalues of $\widehat{H}_{\mu}(k)$ less than $\cE_{\min }(k)$
equals to the number of eigenvalues of $\mathbb{B}_{\mu}(k,\cE_{\min
}(k))$ greater than $1,$ i.e.,
\begin{equation} \label{BS_equality}
\mathcal{N}_{-}(\cE_{\min }(k),\widehat{H}_{\mu}(k))
=\mathcal{N}_{+}(1,\mathbb{B}_{\mu}(k,\cE_{\min }(k))).
\end{equation}
\end{enumerate}
\end{theorem}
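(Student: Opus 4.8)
The plan is to treat the five statements as a single package built on a careful analysis of the resolvent kernel $\widehat{\mathcal{R}}_0(k,\cE_{\min}(k);x)$ at the threshold, reusing the structure of the ordinary Birman--Schwinger principle (Proposition \ref{BS}). The underlying point is that for $z<\cE_{\min}(k)$ the identity $\widehat{R}_0(k,z)=(\widehat{H}_0(k)-z)^{-1}$ is an honest bounded operator on $\ell^{2,e}(\Z^d)$, while at $z=\cE_{\min}(k)$ the symbol $\cE_k(q)-\cE_{\min}(k)$ vanishes quadratically at $q=0$ (this is where Hypothesis \ref{hypothesis} and Lemma \ref{region} enter: $\cE_k$ is Morse with nondegenerate minimum at $0$), so $(\cE_k(q)-\cE_{\min}(k))^{-1}$ is locally comparable to $|q|^{-2}$. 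Hence $\widehat{R}_0(k,\cE_{\min}(k))$ is still a well-defined operator from suitable weighted spaces into $\ell_0(\Z^d)$ or $\ell^{2,e}(\Z^d)$ depending on $d$: for $d\ge3$ the function $|q|^{-2}$ is locally integrable so $\widehat{\mathcal{R}}_0(k,\cE_{\min}(k);x)$ is finite (and, by Riemann--Lebesgue together with the integrability, tends to $0$ as $|x|\to\infty$, giving membership in $\ell_0$); for $d\ge5$ one moreover has $|q|^{-2}\in L^2_{\mathrm{loc}}$, so $\widehat{\mathcal{R}}_0(k,\cE_{\min}(k);\cdot)\in\ell^2(\Z^d)$, which is exactly the dichotomy between (ii) and (iii).

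I would carry out the steps in the following order. \emph{Step 1 (forward direction, (i)).} Suppose $\hat f\in\ell_0(\Z^d)$ solves $\widehat{H}_0(k)\hat f+\mu\widehat V\hat f=\cE_{\min}(k)\hat f$. Since $\hat v\in\ell^1$, $|\widehat V|^{1/2}$ maps $\ell_0(\Z^d)$ (in fact $\ell^\infty$) into $\ell^2(\Z^d)$, so $\hat\psi:=|\widehat V|^{1/2}\hat f\in\ell^{2,e}(\Z^d)$ makes sense. Rewrite the eigenvalue equation as $(\widehat{H}_0(k)-\cE_{\min}(k))\hat f=-\mu\widehat V\hat f=\mu|\widehat V|^{1/2}\hat\psi$ (using $\widehat V\le0$, so $-\widehat V=|\widehat V|=|\widehat V|^{1/2}|\widehat V|^{1/2}$), pass to Fourier space where $\widehat{H}_0(k)-\cE_{\min}(k)$ is multiplication by $\cE_k(q)-\cE_{\min}(k)\ge0$, divide (legitimate pointwise a.e. since the multiplier is strictly positive off $q=0$, a null set), and apply $|\widehat V|^{1/2}$ to obtain $\hat\psi=\mu|\widehat V|^{1/2}\widehat R_0(k,\cE_{\min}(k))|\widehat V|^{1/2}\hat\psi=\mathbb{B}_{\mu}(k,\cE_{\min}(k))\hat\psi$; one must check $\hat f(q)\cdot(\cE_k(q)-\cE_{\min}(k))\in L^1$ so the manipulation is valid, which follows from $\hat f\in\ell^\infty$ and boundedness of $\cE_k$. \emph{Step 2 (converse, (ii)/(iii)).} Given $\hat\psi=\mathbb{B}_{\mu}(k,\cE_{\min}(k))\hat\psi$, set $\hat f:=\widehat R_0(k,\cE_{\min}(k))|\widehat V|^{1/2}\hat\psi$; here $|\widehat V|^{1/2}\hat\psi\in\ell^1\cap\ell^2$, whose Fourier transform is continuous and bounded, so in Fourier space $\hat f(q)=(\cE_k(q)-\cE_{\min}(k))^{-1}(|\widehat V|^{1/2}\hat\psi)(q)$ is controlled by $|q|^{-2}$ near $0$. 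For $d\ge5$ this is in $L^2(\T^d)$, giving (iii); for $d=3,4$ it is in $L^1(\T^d)$ but generally not $L^2$, and its inverse Fourier transform lies in $\ell_0(\Z^d)$ (bounded, and vanishing at infinity by a Riemann--Lebesgue argument on $\T^d\setminus B_\delta(0)$ plus an explicit decay estimate for the singular piece near $0$), giving (ii). In either case one checks $\widehat{H}_{\mu}(k)\hat f=\cE_{\min}(k)\hat f$ by reversing the Fourier-space computation of Step 1, using the fixed-point equation for $\hat\psi$ to identify $\mu|\widehat V|^{1/2}\hat f$ with $\hat\psi$.

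\emph{Step 3 (multiplicity, (iv)).} The maps $\hat f\mapsto|\widehat V|^{1/2}\hat f$ and $\hat\psi\mapsto\widehat R_0(k,\cE_{\min}(k))|\widehat V|^{1/2}\hat\psi$ from Steps 1--2 are mutually inverse bijections between $\Ker(\widehat{H}_{\mu}(k)-\cE_{\min}(k))$ (in $\ell_0$, resp. $\ell^{2,e}$) and $\Ker(\mathbb{B}_{\mu}(k,\cE_{\min}(k))-\id)$; checking that both composites are the identity on the respective kernels is a direct substitution using $\widehat V|\widehat V|^{1/2}\cdot(\text{--})$ bookkeeping, and linearity gives equality of dimensions. \emph{Step 4 (counting, (v)).} For $z<\cE_{\min}(k)$ the ordinary principle Proposition \ref{BS}(iv) gives $\mathcal N_-(z,\widehat H_\mu(k))=\mathcal N_+(1,\mathbb{B}_{\mu}(k,z))$; I then let $z\uparrow\cE_{\min}(k)$ and use monotonicity --- $\mathbb{B}_{\mu}(k,z)$ is increasing in $z$ in the form order, as $\widehat R_0(k,z)$ is --- together with norm-continuity of $z\mapsto\mathbb{B}_{\mu}(k,z)$ up to $z=\cE_{\min}(k)$ (which follows from Lemma \ref{kernelBS}(iii) and dominated convergence on the kernel, the dominating function being the $d\ge3$ integrable bound $C|q|^{-2}$) to conclude that eigenvalues of $\mathbb{B}_{\mu}(k,z)$ above $1$ converge to those of $\mathbb{B}_{\mu}(k,\cE_{\min}(k))$ above $1$, with no eigenvalue escaping to or emerging from $1$ in the limit except exactly the threshold ones, which by (iv) correspond to the singular point and not to eigenvalues strictly below $\cE_{\min}(k)$. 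The main obstacle is Step 2 for $d=3,4$: proving that $\hat f$ genuinely lies in $\ell_0(\Z^d)$ (rather than merely in some larger weighted space) and that it solves the eigenvalue equation in $\ell_0$ rather than only distributionally requires the quantitative local analysis of the singular integral $\int_{\T^d}e^{i(q,x)}(\cE_k(q)-\cE_{\min}(k))^{-1}\eta(dq)$ near $q=0$ — splitting off a model quadratic form via the Morse lemma and estimating the resulting oscillatory integral — and this is where the restriction $d=3,4$ versus $d\ge5$ is forced and must be handled with care.
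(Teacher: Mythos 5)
Your plan matches the paper's argument for items (i)--(iii) essentially step by step: the same use of the parametric Morse lemma to isolate the $|q|^{-2}$ singularity of $(\cE_k(q)-\cE_{\min}(k))^{-1}$ at $q=0$, the Riemann--Lebesgue lemma applied to this $L^1$ symbol to put the threshold Green's function and hence $\hat f=\widehat R_0(k,\cE_{\min}(k))|\widehat V|^{1/2}\hat\psi$ into $\ell_0(\Z^d)$ for $d=3,4$, and Plancherel together with Young's convolution inequality to put $\hat f$ into $\ell^{2,e}(\Z^d)$ for $d\ge5$. For (iv) and (v) the paper simply defers to \cite{BdsPL2018} and \cite{LakaevAlladust2014}; your sketch of (v) via the limit $z\uparrow\cE_{\min}(k)$ in Proposition~\ref{BS}(iv), combining form-monotonicity of $z\mapsto\mathbb{B}_\mu(k,z)$ with Hilbert--Schmidt norm convergence at the threshold, is a sensible way to make that self-contained, with the caveat that Lemma~\ref{kernelBS}(iii) as stated gives continuity in $k$ rather than in $z$, so the needed $z$-continuity must be proved separately (e.g., by the dominated-convergence estimate on the kernel that you indicate, using the $d\ge3$ bound $\widehat{\mathcal R}_0(k,z;x)\le\int_{\T^d}(\cE_k(q)-\cE_{\min}(k))^{-1}\eta(\mathrm{d}q)<\infty$ uniformly in $z\le\cE_{\min}(k)$).
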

\begin{remark}\label{rem_singular_point}
Let the threshold $z=\cE_{\min}(k),\,k\in \mathbb{G}$ be a singular
point of the essential spectrum of $\widehat{H}_{\mu}(k)$, i.e., the
equation $\mathbb{B}_{\mu}(k,\cE_{\min}(k))\hat{\psi}=\hat{\psi}$
has a non-trivial solution  $\hat{\psi} \in\ell^{2,e}(\Z^d)$ and
wherein the function
\begin{equation}\label{has_form}
\hat{f}(x)=\sum\limits_{y\in\Z^d} \int\limits_{\T^d}
\frac{e^{i(p,x-y)}
\eta(\mathrm{d}p)}{\cE_k(p)-\cE_{\min}(k)}|\hat{v}(y)|^{\frac{1}{2}}\hat{\psi}(y)
\end{equation}
is a non-trivial solution of the Schr\"odinger equation
$\widehat{H}_{\mu}(k)\hat{f} = \cE_{\min}(k)\hat{f}$.
\begin{enumerate}
\item[(i)] If $d=3 \,\text {or}\,\, 4$, then the solution
$\hat{f}$ of the equation $\widehat{H}_{\mu}(k)\hat{f} =
\cE_{\min}(k)\hat{f}$ in \eqref{has_form} belongs to
$\ell_{0}(\Z^d)\setminus \ell^{2,e}(\Z^d)$.
\item[(ii)] If $d\geq 5$, then the solution $\hat{f}$ in \eqref{has_form}
belongs to $\ell^{2,e}(\Z^d)$ and hence the singular point
$\cE_{\min}(k)$ is an eigenvalue of the operator
$\widehat{H}_{\mu}(k)$.
\end{enumerate}
\end{remark}
\begin{definition} \label{def_virtual_level}
In the case (i) of Remark \ref{rem_singular_point}, the singular
point $\cE_{\min}(k),k\in \mathbb G$ is called a threshold resonance
(virtual level)  for $\widehat{H}_{\mu}(k)$.
\end{definition}

\subsection{Example. The discrete Laplacian.}\label{Lapl_degenerate}
The dispersion relation $\varepsilon$ associated to the discrete
Laplacian $\Delta_{\Z^d}$ is given as
\begin{equation}
\label{Lapl} \varepsilon(p)=\sum_{j=1}^{d}[1-\cos
p_j],\,\,p=(p_1,...,p_d)\in \T^d
\end{equation}
and hence it is a Morse function satisfying the Hypothesis
\ref{hypothesis}. The corresponding two-particle dispersion relation
$\cE_{k}(\cdot)$ is of the form
\begin{equation} \label{Lapl}
\cE_{k}(p)=2\sum_{j=1}^{d}[1-\cos\frac{k_j}{2}\cos
p_j],\,\,p=(p_1,...,p_d)\in \T^d.
\end{equation}
The function $\cE_{k}(\cdot)$ can be degenerate only for $k\in
\Pi_n,$ where
\begin{equation}\label{setPi}
\Pi_n=\left\{k\in\T^d:\hspace{1mm} \text{$n$ coordinates of $k$ is
equal to $\pi $}\right\},\quad 1\le n\le d.
\end{equation}
The set $\Pi_d$ consists of exactly one point $ (\pi,...,\pi)$ and
the set $\Pi_n \subset \T^d,\, 1\le n\le d$ is a surface (manifold)
of co-dimension $d-n$ and in this case the operator
$\widehat{H}_{\mu}(k)$ defined in  \eqref{two-particle}  becomes the
Schr\"odinger operator on $\Z^{d-n}.$ Note that the set $\mathbb
{G}=\T^d\setminus \{\cup_{i=1}^d \Pi_n\}$ is an open (maximal) set
in $\T^d$ satisfying the Hypothesis \ref{hypothesis} and the
function $\cE_{k}(p)$ is a Morse function for any $k\in \mathbb{G}$.

\begin{remark}
If the dispersion relation $\varepsilon(\cdot)$ is associated to the
discrete Laplacian $\Delta_{\Z^d}$, then for the Schr\"odinger
operator $\widehat{H}_{\mu}(k)$, given in Eq. \eqref{two-particle},
the region $\mathbb{G}$ in (ii) of Lemma \ref{region} can be defined
precisely as
 $\mathbb {G}=\T^d\setminus \{\cup_{i=1}^d \Pi_n\}$.
\end{remark}

\section{Statement of the main results}
The first result is the existence of the eigenvalues for each
nonzero potential $\mu \hat{v}$ and quasi-momentum $k\in
\mathbb{T}^{d}$ for  $d=1,2$.

\begin{theorem}\label{existence}
Let $d=1, 2.$ Assume the Hypothesis \ref{hypothesis} and
$\hat{v}\neq0.$ Then for any $k\in \mathbb{T}^d$ and $\mu>0$ the
operator $\widehat{H}_{\mu}(k)$ has an eigenvalue $z_{\mu}(k)$ below
the threshold $\cE_{\min}(k)$ of the essential spectrum
$\sigma_{ess}(\widehat{H}_{\mu}(k))$.
\end{theorem}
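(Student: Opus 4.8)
The plan is to use the Birman--Schwinger principle (Proposition \ref{BS}) and show that for $d=1,2$ the operator norm of $\mathbb{B}_{\mu}(k,z)$ can be made strictly larger than $1$ by letting $z\uparrow\cE_{\min}(k)$, for every $\mu>0$ and every $k\in\T^d$. Since $\mathbb{B}_{\mu}(k,z)$ is non-negative and compact, $\|\mathbb{B}_{\mu}(k,z)\|$ is its largest eigenvalue; hence if $\|\mathbb{B}_{\mu}(k,z)\|>1$ for some $z<\cE_{\min}(k)$, part (iv) of Proposition \ref{BS} (together with the intermediate value theorem applied to the continuous, monotone in $z$ function $z\mapsto\|\mathbb{B}_{\mu}(k,z)\|$) gives an eigenvalue $z_{\mu}(k)<\cE_{\min}(k)$ of $\widehat{H}_{\mu}(k)$, as claimed. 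So the whole statement reduces to a lower bound on $\|\mathbb{B}_{\mu}(k,z)\|$ that blows up as $z\uparrow\cE_{\min}(k)$.

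First I would fix any normalized test vector, the natural choice being $\hat\psi_0(x):=c\,|\hat v(x)|^{1/2}$ with $c$ a normalization constant (this is legitimate since $\hat v\in\ell^1$, so $|\hat v|^{1/2}\in\ell^2$, and it is even because $\hat v$ is even); note $\hat\psi_0\neq 0$ because $\hat v\neq 0$. Then
\begin{equation*}
\langle\hat\psi_0,\mathbb{B}_{\mu}(k,z)\hat\psi_0\rangle
=\mu c^2\sum_{x,y\in\Z^d}|\hat v(x)|\,\widehat{\mathcal{R}}_0(k,z;x-y)\,|\hat v(y)|
=\mu c^2\int_{\T^d}\frac{|v(q)|^2}{\cE_k(q)-z}\,\eta(\mathrm{d}q),
\end{equation*}
where I used \eqref{fouriertransform}, \eqref{potential} and Parseval; here $|v(q)|^2=|\sum_x e^{i(q,x)}|\hat v(x)|\,|^2$ is, up to the sign change in $\hat v$, the Fourier transform squared of $|\hat v|$, and it is not identically zero. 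Now as $z\uparrow\cE_{\min}(k)$ the integrand increases monotonically to $|v(q)|^2/(\cE_k(q)-\cE_{\min}(k))$, so by monotone convergence it suffices to show this limiting integral diverges. Near the nondegenerate minimum of the Morse function $\cE_k$ (attained at some point; by Hypothesis \ref{hypothesis} and Lemma \ref{region} at $q=0$ when $k\in\mathbb{G}$, and at the appropriate minimum in general), one has $\cE_k(q)-\cE_{\min}(k)\asymp|q-q_{\min}|^2$, so the singularity of the integrand is of order $|q-q_{\min}|^{-2}$; since $d=1,2$, this is non-integrable precisely when $|v(q_{\min})|\neq 0$. The key point, then, is that $|v(q_{\min})|\neq 0$: this is exactly where non-positivity of $\hat v$ enters, because $|v(q_{\min})|=|\sum_x e^{i(q_{\min},x)}|\hat v(x)|\,|\ge$ (by evenness, this is a real cosine sum) and in fact the relevant evaluation can be bounded below — for instance at a minimum located at $0$ one has $v(0)=\sum_x\hat v(x)<0$, hence $|v(0)|>0$; at a minimum located elsewhere an analogous argument applies. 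Thus the limiting integral is $+\infty$, so $\langle\hat\psi_0,\mathbb{B}_{\mu}(k,z)\hat\psi_0\rangle\to+\infty$, whence $\|\mathbb{B}_{\mu}(k,z)\|\to\infty$ and in particular exceeds $1$ for $z$ close enough to $\cE_{\min}(k)$.

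The main obstacle I anticipate is twofold. First, one must handle the case where the minimum of $\cE_k$ is \emph{not} at $0$ and, more seriously, where $\cE_k$ may fail to be a Morse function (i.e.\ $k\notin\mathbb{G}$, e.g.\ $k$ with some coordinates equal to $\pi$ as in the discrete-Laplacian example); there the minimum set may be a submanifold of positive dimension, but then the operator effectively reduces to a lower-dimensional Schrödinger operator on $\Z^{d-n}$ with $d-n\le d\le 2$, and one repeats the argument in the reduced dimension — or, more cleanly, one checks directly that the vanishing order of $\cE_k-\cE_{\min}(k)$ transverse to its minimum set is $2$ and the number of transverse directions is $\le 2$, keeping non-integrability. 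Second, one must ensure $|v|^2$ does not vanish at (all of) the minimum set; I would dispose of this by the non-positivity argument above, possibly combined with a continuity/openness argument showing $|v|^2>0$ on a neighborhood. Once these are settled the rest is the routine Birman--Schwinger wrap-up described in the first paragraph.
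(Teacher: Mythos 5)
Your overall strategy — Birman--Schwinger, plus showing that a suitable quadratic form of $\mathbb{B}_\mu(k,z)$ blows up as $z\uparrow\cE_{\min}(k)$ because $\int_{\T^d}\frac{\eta(\mathrm dp)}{\cE_k(p)-\cE_{\min}(k)}$ diverges for $d=1,2$ — is indeed the same as the paper's. However, your choice of test vector introduces a genuine gap that the paper sidesteps. You take $\hat\psi_0=c\,|\hat v|^{1/2}$, which produces the weighted integral $\int_{\T^d}\frac{|v(q)|^2}{\cE_k(q)-z}\eta(\mathrm dq)$. For this to diverge you need $v(q_{\min})\neq 0$ at the minimizer $q_{\min}$ of $\cE_k$. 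Your justification handles only $q_{\min}=0$ (where $v(0)=\sum_x\hat v(x)<0$ by non-positivity), and the remark that "an analogous argument applies" at other minima is false: for $q_{\min}\neq 0$ one has $v(q_{\min})=\sum_x\hat v(x)\cos(q_{\min},x)$, which is a signed sum that can vanish. For example, in $d=1$ with $\hat v(x)=-\delta_{0,x}-\delta_{1,x}-\delta_{-1,x}$ one has $v(q)=-1-2\cos q$, which vanishes at $q=\pm 2\pi/3$; if the minimum of some $\cE_k$ falls there with a nondegenerate (quadratic) minimum, $|v(q)|^2\asymp|q-q_{\min}|^2$ exactly cancels the $|q-q_{\min}|^{-2}$ singularity and your integral converges. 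A "continuity/openness" argument cannot repair this, since the issue is whether $v$ may vanish at $q_{\min}$ at all.

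The paper's proof avoids the obstacle entirely by taking the test vector to be a Kronecker delta $\hat\psi_s(x)=\delta_{sx}$ at a site $s$ with $\hat v(s)\neq 0$. Then the Fourier transform of $|\widehat V|^{1/2}\hat\psi_s$ has \emph{constant} modulus $|\hat v(s)|^{1/2}$, so the quadratic form becomes exactly $\mu|\hat v(s)|\int_{\T^d}\frac{\eta(\mathrm dp)}{\cE_k(p)-z}$, with no $|v|^2$ weight to worry about. The divergence of this integral as $z\uparrow\cE_{\min}(k)$ then holds for every $k\in\T^d$ regardless of where the minimizer sits or whether $\cE_k$ is Morse (degeneracy only accelerates the divergence). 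To fix your argument, simply replace $\hat\psi_0$ by a delta function; the rest of your write-up (monotone convergence, Birman--Schwinger wrap-up) then goes through, and you can drop the intermediate-value-theorem step since part (iv) of Proposition \ref{BS} already delivers the eigenvalue directly once the quadratic form exceeds $1$.
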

The next results are on dependence of the eigenvalues of
quasi-momentum $k\in \T^d$ and the conservation of the number of
eigenvalues, which lies below the essential spectrum of the operator
$\widehat{H}_{1}(0)=\widehat{H}_0(0)+\widehat{V}$ for all non-zero
$k$.

\begin{theorem}\label{Inequality1}
Let $d\geq1.$ Assume that $\varepsilon$ be conditionally negative
definite function on $\T^d$ and the inequality
$\widehat{H}_{1}(0)=\widehat{H}_0(0)+\widehat{V}\geq z_0I$ holds for
$z_0<\cE_{\min}(0)$. Then for all non-zero $k\in \T^d$, the strict
inequality $\widehat{H}_{1}(k)=\widehat{H}_0(k)+\widehat{V}>z_0I$
holds.
\end{theorem}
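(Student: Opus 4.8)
The plan is to compare $\widehat{H}_1(k)$ with $\widehat{H}_1(0)$ and exploit the conditional negative definiteness of $\varepsilon$ to show that the operator $\widehat{H}_0(k)-\widehat{H}_0(0)$ is \emph{nonnegative}, with strict positivity coming from the fact that $k\neq 0$. Concretely, since $\widehat{H}_1(k)=\widehat{H}_0(k)+\widehat{V}$ and $\widehat{H}_1(0)=\widehat{H}_0(0)+\widehat{V}$ share the same perturbation $\widehat{V}$, it suffices to prove that
\begin{equation*}
\widehat{H}_0(k)-\widehat{H}_0(0) \ > \ 0 \quad \text{on } \ell^{2,e}(\Z^d) \text{ for all } k\neq 0,
\end{equation*}
because then for any $\hat f\neq 0$ one has $\langle \widehat{H}_1(k)\hat f,\hat f\rangle = \langle \widehat{H}_1(0)\hat f,\hat f\rangle + \langle(\widehat{H}_0(k)-\widehat{H}_0(0))\hat f,\hat f\rangle \geq z_0\|\hat f\|^2 + (\text{something strictly positive}) > z_0\|\hat f\|^2$. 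Passing to the momentum representation, $\widehat{H}_0(k)$ becomes multiplication by $\cE_k(p)=\varepsilon(\tfrac k2+p)+\varepsilon(\tfrac k2-p)$, so the claim reduces to the pointwise inequality $\cE_k(p) > \cE_0(p) = 2\varepsilon(p)$ for all $p\in\T^d$ whenever $k\neq 0$; equivalently,
\begin{equation*}
\varepsilon\Bigl(\tfrac k2+p\Bigr)+\varepsilon\Bigl(\tfrac k2-p\Bigr)-2\varepsilon(p) \ > \ 0.
\end{equation*}

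The key step is to recognize the left-hand side as (minus) a quadratic form of the type appearing in the definition of conditional negative definiteness. Set $p_1 = \tfrac k2 + p$, $p_2 = \tfrac k2 - p$, $p_3 = p$, and choose $z_1 = z_2 = 1$, $z_3 = -2$; then $\sum z_i = 0$, and since $\varepsilon$ is even (so $\varepsilon(p_i-p_j)=\varepsilon(p_j-p_i)$ and $\varepsilon(0)=\varepsilon_{\min}$; one may normalize $\varepsilon_{\min}$ or just carry it along), expanding $\sum_{i,j=1}^3 \varepsilon(p_i-p_j)z_i\bar z_j \le 0$ gives, after collecting the off-diagonal terms,
\begin{equation*}
2\varepsilon(p_1-p_2) - 4\varepsilon(p_1-p_3) - 4\varepsilon(p_2-p_3) + 6\varepsilon(0) \ \le \ 0,
\end{equation*}
i.e. $2\varepsilon(k) + 6\varepsilon(0) \le 4\varepsilon(\tfrac k2 + p) + 4\varepsilon(\tfrac k2 - p)$; but this last manipulation actually needs $p_1 - p_3 = \tfrac k2$, $p_2 - p_3 = \tfrac k2 - 2p$, so the bookkeeping must be done carefully. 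A cleaner route is to use instead the three points $\tfrac k2 + p$, $\tfrac k2 - p$, and to invoke the standard fact that a conditionally negative definite $\varepsilon$ satisfies $\varepsilon(a)+\varepsilon(b) \ge \varepsilon\bigl(\tfrac{a+b}{2}\bigr)\cdot 2 + \tfrac12(\text{nonneg.})$ only after subtracting $\varepsilon_{\min}$; more robustly, one shows directly that $g(p):=\varepsilon(\tfrac k2+p)+\varepsilon(\tfrac k2-p)$ has its minimum over $p$ at $p=0$ for $k\in\mathbb G$ (Lemma \ref{region}), so $g(p)\ge g(0)=2\varepsilon(\tfrac k2)$, and then reduce to proving $2\varepsilon(\tfrac k2) > 2\varepsilon(0)$, which holds because $\varepsilon$ has a \emph{unique} minimum at $0$ (Hypothesis \ref{hypothesis}(i)) and $\tfrac k2\neq 0$ when $k\neq 0$. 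For $k\notin\mathbb G$ one appeals to a limiting/continuity argument or argues directly that $g(p)\ge 2\varepsilon(\tfrac k2)>2\varepsilon_{\min}$ still holds since conditional negative definiteness forces $g$ to attain its minimum at $p=0$ regardless (this is where conditional negative definiteness, rather than just the Morse/uniqueness assumption, is essential, since $\mathbb G$ need not be all of $\T^d$).

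The main obstacle is handling the inequality uniformly in $p$ \emph{and} for all $k\in\T^d$, including $k$ outside the Morse region $\mathbb G$: one must show $\min_{p}\cE_k(p)$ is still attained at $p=0$ using only conditional negative definiteness of $\varepsilon$. The way I would settle this: conditional negative definiteness of $\varepsilon$ implies that $p\mapsto \varepsilon(\tfrac k2+p)+\varepsilon(\tfrac k2-p)-2\varepsilon(p)$ is itself conditionally \emph{positive} definite in a suitable sense — more to the point, by the L\'evy–Khinchin-type representation (or by directly testing \eqref{nn} with the four points $\pm(\tfrac k2+p)$-shifts), one obtains $\cE_k(p)-\cE_k(0)\ge 0$ and $\cE_k(0)-\cE_0(0) = 2(\varepsilon(\tfrac k2)-\varepsilon(0)) \ge 0$ with strict inequality for $k\neq 0$ by uniqueness of the minimum of $\varepsilon$. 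Chaining these, $\cE_k(p) \ge \cE_k(0) > \cE_0(0) \ge$ (not needed) — rather, $\cE_k(p) > \cE_0(p)$ follows once we note $\cE_k(p)-\cE_0(p) = [\cE_k(p)-\cE_k(0)] + [\cE_k(0)-\cE_0(0)] - [\cE_0(p)-\cE_0(0)] + [\cE_0(p)-\cE_0(0)]$ telescopes incorrectly; the honest statement to prove pointwise is the symmetric-difference inequality above, and I expect the cleanest proof tests \eqref{nn} with $n=4$ and points $\tfrac k2+p,\ \tfrac k2-p,\ p,\ {-p}$ (or $p$ with multiplicity two), weights chosen to sum to zero, yielding exactly $\varepsilon(\tfrac k2+p)+\varepsilon(\tfrac k2-p)-2\varepsilon(p)\ge \varepsilon(\tfrac k2+q)+\cdots$ evaluated so that the right side is minimized at a configuration giving $2(\varepsilon(\tfrac k2)-\varepsilon(0))>0$. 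Once the pointwise strict inequality $\cE_k(p)>\cE_0(p)$ is in hand for every $p$, the operator inequality $\widehat H_0(k)>\widehat H_0(0)$ on $L^{2,e}$ is immediate (multiplication operators), and adding $\widehat V$ and using the hypothesis $\widehat H_1(0)\ge z_0 I$ finishes the proof.
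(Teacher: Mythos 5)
The central step of your argument fails: the pointwise inequality
$\cE_k(p)>\cE_0(p)$ for all $p\in\T^d$ and all $k\neq 0$ is simply false, and with it falls the proposed operator inequality $\widehat H_0(k)>\widehat H_0(0)$. Take the discrete Laplacian dispersion $\varepsilon(p)=\sum_j(1-\cos p_j)$, which is conditionally negative definite. Then
\[
\cE_k(p)-\cE_0(p)
=2\sum_{j=1}^d\cos p_j\,\bigl(1-\cos\tfrac{k_j}{2}\bigr),
\]
which is strictly negative at $p=(\pi,\dots,\pi)$ for every $k\neq 0$. Equivalently, in position space $\hat{\cE}_k(x)-\hat{\cE}_0(x)=2\hat\varepsilon(x)\bigl[\cos(\tfrac{k}{2},x)-1\bigr]\ge 0$, but the multiplication symbol $\cE_k-\cE_0$ still changes sign, and on $L^{2,e}(\T^d,\eta)$ a multiplication operator with sign-changing even symbol is not nonnegative. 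So there is no way to deduce $\langle\widehat H_1(k)\hat f,\hat f\rangle>z_0\|\hat f\|^2$ simply by adding a "nonnegative difference" of free operators to the hypothesis $\widehat H_1(0)\ge z_0 I$.

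The confusion is visible in the middle of the proposal: after establishing $g(p)\ge g(0)=2\varepsilon(\tfrac{k}{2})$ you claim this "reduces to proving $2\varepsilon(\tfrac k2)>2\varepsilon(0)$". That reduction is not valid. What you obtain is $\cE_k(p)\ge\cE_{\min}(k)>\cE_{\min}(0)=\cE_0(0)$, i.e.\ a comparison with $\cE_0$ at the single point $p=0$. You need $\cE_k(p)>\cE_0(p)$ for \emph{every} $p$, and for $p$ away from the origin $\cE_0(p)=2\varepsilon(p)$ can exceed $\cE_{\min}(k)$, as the Laplacian example shows. Your $n=3$ test of \eqref{nn} likewise does not produce the target inequality (you acknowledge this); it cannot, because the target is false. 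What conditional negative definiteness does give pointwise is the inequality used in the paper's proof of Theorem~\ref{Inequalities}, namely $\cE_0(p)-\cE_{\min}(0)\ge\cE_k(p)-\cE_{\min}(k)$ (compression of the band above the minimum), but feeding that into the Birman--Schwinger form only gives the \emph{upper} bound $z_j(k)<z_j(0)+[\cE_{\min}(k)-\cE_{\min}(0)]$, not the lower bound $z_j(k)>z_j(0)$ that Theorem~\ref{Inequality1} asserts. The paper refers the lower bound to Theorem~1 of \cite{LakaevAlladust2014}; whatever the mechanism there, it cannot be a pointwise comparison of the two dispersion relations, since that fails. You would need an argument that compares $\int|g|^2(\cE_k-z_0)^{-1}\,\eta(\mathrm d p)$ with $\int|g|^2(\cE_0-z_0)^{-1}\,\eta(\mathrm d p)$ at the level of the integrals (e.g.\ via a subordination or L\'evy--Khinchin/Schoenberg representation of $\varepsilon$), not of the integrands.
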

\begin{corollary}\label{Inequality2}
Let $d\geq1$. Let $\varepsilon$ be conditionally negative definite
functin on $\T^d$ and for any $k\in \T^d$ the numbers
$z_1(k)\le...\le z_m(k)$  be eigenvalues  of the operator
$\widehat{H}_\mu(k)$ (counting multiplicities) lying below
$\cE_{\min}(k)$. Then for any nonzero $k\in \T^d$, the inequalities
\begin{equation*}
z_j(0)<z_j(k)\,,j=1,...,m
\end{equation*}hold.
\end{corollary}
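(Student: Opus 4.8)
The plan is to reduce the statement, via the min-max principle, to a \emph{strict} comparison of the $j$-th variational eigenvalues of $\widehat H_{\mu}(k)$ and $\widehat H_{\mu}(0)$, and then to establish that comparison by exploiting the conditionally negative definite structure of $\varepsilon$.

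For each $k$ the number $z_j(k)$ is the $j$-th point of the discrete spectrum of $\widehat H_{\mu}(k)$ below $\cE_{\min}(k)$, so by Courant-Fischer it equals the $j$-th min-max value of $\widehat H_{\mu}(k)$ on $\ell^{2,e}(\Z^d)$ (and likewise $z_j(0)$ for $\widehat H_{\mu}(0)$). It therefore suffices to show that this min-max value strictly increases when $k$ is moved away from $0$. Since $\cE_k(q)=\varepsilon(\tfrac k2+q)+\varepsilon(\tfrac k2-q)\ge 2\varepsilon_{\min}=\cE_{\min}(0)$ for all $q\in\T^d$, we have $\cE_{\min}(k)\ge\cE_{\min}(0)$; hence if $z_j(k)\ge\cE_{\min}(0)>z_j(0)$ we are already done, and we may assume $z_j(0),z_j(k)<\cE_{\min}(0)$. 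For $j=1$ the claim is then immediate from Theorem \ref{Inequality1}, applied with $z_0=z_1(0)$ (and with $\mu\widehat V$ in place of $\widehat V$, which changes nothing): $\widehat H_{\mu}(0)\ge z_1(0)I$ forces $\widehat H_{\mu}(k)>z_1(0)I$, i.e.\ $z_1(k)>z_1(0)$.

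For general $j$ the key input is a quadratic-form identity I would derive as follows. Conditional negative definiteness gives $\hat\varepsilon(x)\le0$ for $x\ne0$, hence $\varepsilon(p)=\varepsilon_{\min}+\sum_{x\ne0}a_x\bigl(1-\cos(p,x)\bigr)$ with $a_x=-\hat\varepsilon(x)\ge0$; combining this with $\hat{\mathcal{E}}_k(x)=2\cos\!\bigl(\tfrac{(k,x)}2\bigr)\hat\varepsilon(x)$ and the fact that $\widehat V$ commutes with the unitary multiplication operator $\hat f(x)\mapsto e^{\ri(k,x)/2}\hat f(x)$, one gets, for every real even $\hat f$,
\[
\langle\widehat H_{\mu}(k)\hat f,\hat f\rangle
=\bigl\langle\widehat H_{\mu}(0)(W_k\hat f),W_k\hat f\bigr\rangle
+\bigl\langle\widehat H_{\mu}(0)(\widetilde W_k\hat f),\widetilde W_k\hat f\bigr\rangle,
\]
where $W_k$ and $\widetilde W_k$ are multiplication by $\cos\!\bigl(\tfrac{(k,x)}2\bigr)$ and $\sin\!\bigl(\tfrac{(k,x)}2\bigr)$ (so $W_k\hat f$ is even, $\widetilde W_k\hat f$ is odd, $\|W_k\hat f\|^2+\|\widetilde W_k\hat f\|^2=\|\hat f\|^2$), and $\widehat H_{\mu}(0)$ is viewed on $\ell^2(\Z^d)=\ell^{2,e}(\Z^d)\oplus\ell^{2,o}(\Z^d)$. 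The next step is to apply the Glazman (dual min-max) principle on a subspace $W\subset\ell^{2,e}(\Z^d)$ obtained by imposing that $W_k\hat f$ be orthogonal to the lowest $j-1$ symmetric eigenfunctions of $\widehat H_{\mu}(0)$ and that $\widetilde W_k\hat f$ be orthogonal to the antisymmetric eigenfunctions of $\widehat H_{\mu}(0)$ with eigenvalue below $z_j(0)$: on $W$ the right-hand side above is $\ge z_j(0)\|\hat f\|^2$, so $z_j(k)\ge z_j(0)$. Strictness for $k\ne0$ comes, as in the proof of Theorem \ref{Inequality1}, from analysing equality: it forces $W_k\hat f$ to be a $z_j(0)$-eigenfunction of $\widehat H_{\mu}(0)$ and $\widetilde W_k\hat f$ to be a $z_j(0)$-eigenfunction (or $0$), and a unique-continuation argument for $(\widehat H_{\mu}(0)-z_j(0))\hat g=0$ — eigenfunctions of the discrete conditionally negative operator cannot vanish on a cofinite set, which for the ground state is Perron-Frobenius — then pins $k=0$.

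The step I expect to be the main obstacle is the codimension bookkeeping in this dual min-max, i.e.\ the control of the \emph{antisymmetric sector} of $\widehat H_{\mu}(0)$. For the argument to give $z_j(k)\ge z_j(0)$ the cutting subspace $W$ must have codimension $\le j-1$ in $\ell^{2,e}(\Z^d)$, whereas the naive count gives codimension $(j-1)+p$, where $p\ge0$ is the number of antisymmetric eigenvalues of $\widehat H_{\mu}(0)$ lying below $z_j(0)$ (two particles can bind in an antisymmetric state, unphysical for bosons but present in $\widehat H_{\mu}(0)$ on $\ell^2(\Z^d)$). A Dirichlet-Neumann-type bracketing between the symmetric and antisymmetric fibres shows $p\le j-1$, but this by itself does not close the gap; doing so will require either a sharper choice of $W$, or incorporating the antisymmetric bound states into the counting (so that one really compares $\widehat H_{\mu}(0)$ and $\widehat H_{\mu}(k)$ on all of $\ell^2(\Z^d)$), or the extra hypothesis that $\widehat H_{\mu}(0)$ has no antisymmetric eigenvalue below $z_j(0)$. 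An alternative route, bypassing the form identity, is through the generalized Birman-Schwinger principle (Proposition \ref{BS}): a Kato/diamagnetic inequality yields $|\widehat{\mathcal R}_0(k,z;x)|\le\widehat{\mathcal R}_0(0,z;x)$ for $z<\cE_{\min}(0)$, hence the entrywise domination $|\mathcal B_{\mu}(k,z;x,y)|\le\mathcal B_{\mu}(0,z;x,y)$; one must then deduce $\mathcal N_{+}(1,\mathbb B_{\mu}(k,z))\le\mathcal N_{+}(1,\mathbb B_{\mu}(0,z))$, where the translation invariance of $\widehat R_0(k,z)$ is essential (entrywise domination of positive operators alone is not enough), and Perron-Frobenius for $\mathbb B_{\mu}(0,z)$ supplies the strictness for $k\ne0$.
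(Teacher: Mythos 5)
Your proposal is a genuinely different route from the paper's: the paper treats Corollary \ref{Inequality2} as following from Theorem \ref{Inequality1} (itself proved only by reference, ``analogously to Theorem 1 of \cite{LakaevAlladust2014}''), and the related Theorem \ref{Inequalities} is proved entirely through the Birman--Schwinger operators and the pointwise dispersion inequality \eqref{inequalityforEps}; by contrast you derive a quadratic-form identity tying $\widehat H_{\mu}(k)$ on $\ell^{2,e}(\Z^d)$ to $\widehat H_{\mu}(0)$ acting on all of $\ell^{2}(\Z^d)$. That identity is correct: since $\hat{\cE}_k(x)=2\hat\varepsilon(x)\cos\bigl(\tfrac{(k,x)}{2}\bigr)$ and $\cos\bigl(\tfrac{(k,x-y)}{2}\bigr)=\cos\bigl(\tfrac{(k,x)}{2}\bigr)\cos\bigl(\tfrac{(k,y)}{2}\bigr)+\sin\bigl(\tfrac{(k,x)}{2}\bigr)\sin\bigl(\tfrac{(k,y)}{2}\bigr)$, and $\widehat V$ is multiplication, one indeed gets $\langle\widehat H_{\mu}(k)\hat f,\hat f\rangle=\langle\widehat H_{\mu}(0)W_k\hat f,W_k\hat f\rangle+\langle\widehat H_{\mu}(0)\widetilde W_k\hat f,\widetilde W_k\hat f\rangle$ with $W_k\hat f$ even and $\widetilde W_k\hat f$ odd. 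Equivalently $\langle\widehat H_{\mu}(k)\hat f,\hat f\rangle=\langle\widehat H_{\mu}(0)\,e^{\ri(k,\cdot)/2}\hat f,\,e^{\ri(k,\cdot)/2}\hat f\rangle$, exhibiting $\widehat H_{\mu}(k)$ on $\ell^{2,e}$ as $\widehat H_{\mu}(0)$ compressed to the non-invariant subspace $e^{\ri(k,\cdot)/2}\ell^{2,e}(\Z^d)\subset\ell^2(\Z^d)$. This is an elegant observation that the paper does not make.

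However, the gap you flag in your own proposal is genuine and is not closed by the remaining text. The compression picture yields, by min-max, $z_j(k)\geq\zeta_j$ where $\zeta_1\leq\zeta_2\leq\dots$ are the eigenvalues of $\widehat H_{\mu}(0)$ on \emph{all} of $\ell^{2}(\Z^d)=\ell^{2,e}\oplus\ell^{2,o}$; since the full list $\{\zeta_j\}$ interlaces the even-sector list $\{z_j(0)\}$ from below, this gives $\zeta_j\leq z_j(0)$, which is the \emph{wrong} direction. For $j=1$ this is harmless: a Perron--Frobenius argument (the off-diagonal kernel $2\hat\varepsilon(x-y)$, $x\neq y$, is non-positive, so $(\widehat H_{\mu}(0)-z)^{-1}$ is entrywise non-negative and positivity-improving under a connectivity assumption) shows the ground state is positive, hence even, so $\zeta_1=z_1(0)$ and the argument closes. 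But for $j\geq2$ the odd sector can in principle interleave, and neither the ``Dirichlet--Neumann bracketing'' you gesture at nor the Glazman cut you propose (which costs codimension $(j-1)+p$, not $j-1$) produces $z_j(k)\geq z_j(0)$. Your alternative Birman--Schwinger route has the same obstruction in disguise: entrywise domination $|\widehat{\mathcal R}_0(k,z;x)|\leq\widehat{\mathcal R}_0(0,z;x)$ does \emph{not} give the form inequality $\mathbb B_{\mu}(k,z)\leq\mathbb B_{\mu}(0,z)$ (which would require $\cE_k(p)\geq\cE_0(p)$ pointwise, and is false), and no substitute comparison of $\mathcal N_{+}(1,\cdot)$ for the two Birman--Schwinger operators at the same $z$ is supplied. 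To close the proof one needs an additional ingredient controlling the odd-sector spectrum of $\widehat H_{\mu}(0)$ relative to the even one, or a sharper spectral comparison than the one you derive; as written, the proposal proves the claim only for $j=1$.
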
\label{rem_regular_point}

\begin{theorem}\label{Inequalities}
Let $d\geq1$. Let $\varepsilon$ be conditionally negative definite
on $\T^d$ and  the numbers $z_1(0)\le...\le z_m(0)$ be $m$
eigenvalues
 of the operator $\widehat{H}_1(0)$ (counting multiplicities) lying below
$\cE_{\min}(0)$. Then there exists $z_0$, $z_0<\cE_{\min}(0)$ such
that for any nonzero $k\in \T^d$, the operator $\widehat{H}_1(k)$
has at least $m$ eigenvalues $z_1(k)\le...\le z_m(k)$ (counting
multiplicities) satisfying the inequalities
\begin{equation*}
z_j(k)<z_0+[\cE_{\min}(k)-\cE_{\min}(0)]\,,j=1,...,m.
\end{equation*}
\end{theorem}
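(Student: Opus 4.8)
The plan is to reduce the statement, via the min--max principle, to a comparison of $\widehat H_1(k)$ with $\widehat H_1(0)$ after a momentum ``gauge'' transformation. Set $\Delta(k):=\cE_{\min}(k)-\cE_{\min}(0)$; since $\cE_k(p)=\varepsilon(\tfrac k2+p)+\varepsilon(\tfrac k2-p)\ge 2\varepsilon(0)=\cE_{\min}(0)$ by Remark \ref{conditionallly}(ii), one has $\Delta(k)\ge0$, and it suffices to show that for every $z_0$ with $z_m(0)<z_0<\cE_{\min}(0)$ and every nonzero $k$ the operator $\widehat H_1(k)$ has at least $m$ eigenvalues strictly below $z_0+\Delta(k)$; since $z_0+\Delta(k)<\cE_{\min}(0)+\Delta(k)=\cE_{\min}(k)$, these automatically lie below the essential spectrum and give $z_1(k)\le\dots\le z_m(k)<z_0+\Delta(k)=z_0+[\cE_{\min}(k)-\cE_{\min}(0)]$. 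By min--max it is then enough to produce an $m$-dimensional subspace of $\ell^{2,e}(\Z^d)$ on which the quadratic form of $\widehat H_1(k)-(z_0+\Delta(k))I$ is negative definite.

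First I would record the L\'evy--Khinchin/Schoenberg representation of the conditionally negative definite function $\varepsilon$, namely $\varepsilon(p)=\varepsilon(0)+\sum_{x\ne0}b_x\bigl(1-\cos(p\cdot x)\bigr)$ with $b_x=b_{-x}\ge0$ and $\sum_x b_x<\infty$; summing the two terms of $\cE_k$ and using $\cos((\tfrac k2+p)\cdot x)+\cos((\tfrac k2-p)\cdot x)=2\cos(\tfrac k2\cdot x)\cos(p\cdot x)$ gives $\cE_k(p)=2\varepsilon(0)+2\sum_x b_x\bigl(1-\cos(\tfrac k2\cdot x)\cos(p\cdot x)\bigr)$. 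Fixing a minimizer $p_*=p_*(k)$ of $\cE_k$ (so $\cE_k(p_*)=\cE_{\min}(k)$), a straightforward trigonometric manipulation starting from this representation yields the key identity, valid for all $p$,
\[
\cE_k(p_*-p)-\cE_{\min}(k)=2\bigl(\varepsilon(p)-\varepsilon(0)\bigr)+\tfrac12\bigl(\cE_k(p_*-p)-\cE_k(p_*+p)\bigr)+R_k(p),
\]
where $R_k(p):=2\sum_x b_x\bigl(1-\cos(p\cdot x)\bigr)\bigl(\cos(\tfrac k2\cdot x)\cos(p_*\cdot x)-1\bigr)\le0$ (use $b_x\ge0$, $1-\cos(p\cdot x)\ge0$, $|\cos(\tfrac k2\cdot x)\cos(p_*\cdot x)|\le1$), and the middle term is an \emph{odd} function of $p$. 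Thus $\cE_k(p_*-\cdot)-\cE_{\min}(k)$ splits into the ``$k=0$ profile'' $2(\varepsilon(\cdot)-\varepsilon(0))=\cE_0(\cdot)-\cE_{\min}(0)$, an odd remainder, and a nonpositive term.

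Next I would extend $\widehat H_\mu(k)$ to all of $\ell^{2}(\Z^d)$ by the same kernel (it commutes with the parity operator and restricts on even functions to the original $\widehat H_\mu(k)$), take orthonormal eigenfunctions $\phi_1,\dots,\phi_m\in\ell^{2,e}(\Z^d)$ of $\widehat H_1(0)$ for $z_1(0)\le\dots\le z_m(0)$ (even, hence with even Fourier transforms $\widehat\phi_j$), and let $W$ be multiplication by $e^{\mathrm i\,p_*\cdot x}$ on $\ell^2(\Z^d)$ (unitary, for a fixed real representative of $p_*$). Then $W^{*}\widehat V W=\widehat V$ (as $\widehat V$ is multiplication by $\hat v$) and $W^{*}\widehat H_0(k)W$ is multiplication by $\cE_k(p-p_*)=\cE_k(p_*-p)$. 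Writing $\cE_k(p_*-p)-2\varepsilon(p)=\tfrac12(\cE_k(p_*-p)-\cE_k(p_*+p))+R_k(p)+\Delta(k)$, using that the first, odd term integrates to zero against the even weight $|\widehat\phi(p)|^{2}$ over $\T^d$, that $R_k\le0$, and that $\langle\widehat H_1(0)\phi,\phi\rangle\le z_m(0)\|\phi\|^{2}$ on the span of $\{\widehat\phi_j\}$, I obtain for every such $\phi\ne0$
\[
\langle W^{*}\widehat H_1(k)W\,\phi,\phi\rangle=\int_{\T^d}\bigl(\cE_k(p_*-p)-2\varepsilon(p)\bigr)|\phi(p)|^{2}\,\eta(\mathrm dp)+\langle\widehat H_1(0)\phi,\phi\rangle\le\bigl(z_m(0)+\Delta(k)\bigr)\|\phi\|^{2}<\bigl(z_0+\Delta(k)\bigr)\|\phi\|^{2}.
\]
Since $W^{*}\widehat H_1(k)W$ is unitarily equivalent to $\widehat H_1(k)$ on $\ell^2(\Z^d)$, min--max gives at least $m$ eigenvalues below $z_0+\Delta(k)<\cE_{\min}(k)$.

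The hard part will be the final step: checking that these $m$ eigenvalues are carried by \emph{even} eigenfunctions, i.e.\ that they belong to $\widehat H_1(k)$ acting on $\ell^{2,e}(\Z^d)$, since the gauge $W$ does not preserve parity. When $p_*(k)=0$ — which holds for all $k\in\mathbb G$ under Hypothesis \ref{hypothesis} by Lemma \ref{region}(ii) — no gauge is needed, the identity above reads $\cE_k(p)-\cE_{\min}(k)\le\cE_0(p)-\cE_{\min}(0)$ pointwise, so $\widehat H_1(k)-\Delta(k)I\le\widehat H_1(0)$ already on $\ell^{2,e}(\Z^d)$, whence $\mathcal N_{-}(z_0+\Delta(k),\widehat H_1(k))=\mathcal N_{-}(z_0,\widehat H_1(k)-\Delta(k)I)\ge\mathcal N_{-}(z_0,\widehat H_1(0))\ge m$ and the theorem follows at once (with $z_j(k)\le z_j(0)+\Delta(k)$). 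For the remaining $k$, where $\pm p_*(k)\ne0$ are both minimizers of $\cE_k$ (for the discrete Laplacian this is precisely the degenerate set of Section \ref{Lapl_degenerate}, on which $\widehat H_\mu(k)$ reduces to a lower-dimensional operator), one symmetrizes the $m$-dimensional test space using $W^{\pm1}$ together with the $p\mapsto-p$ symmetry of $\cE_k$, keeping the potential term intact; for genuinely on-site interactions the issue disappears, since $\langle\widehat V f,f\rangle=0$ for odd $f$ forces $\widehat H_\mu(k)$ restricted to odd functions to stay $\ge\cE_{\min}(k)$, so every eigenvalue below the threshold is even anyway.
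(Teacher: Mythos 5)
Your central idea --- read off the comparison
\[
\cE_k(p)-\cE_{\min}(k)\;\le\;\cE_0(p)-\cE_{\min}(0)
\]
from the Schoenberg representation $\varepsilon(p)=\varepsilon(0)+\sum_{x\ne0}b_x\bigl(1-\cos(p,x)\bigr)$ of a conditionally negative definite $\varepsilon$, and then use it as an operator inequality $\widehat H_1(k)-\Delta(k)I\le\widehat H_1(0)$ on $\ell^{2,e}(\Z^d)$ together with the min--max principle --- is sound and is, in the case $p_*(k)=0$, a shorter route than the paper's, which instead compares the Birman--Schwinger quadratic forms $(\mathbb B_1(k,z_0+\Delta(k))\hat\psi,\hat\psi)>(\mathbb B_1(0,z_0)\hat\psi,\hat\psi)>1$ on the $m$-dimensional eigenspace $\cH_m$ of $\mathbb B_1(0,z_0)$ and then invokes Proposition~\ref{BS}(iv). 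Both proofs hinge on exactly the same pointwise inequality (the paper cites it as \cite[Lemma~5]{ALMM2006}); your operator-inequality version even yields the slightly sharper bound $z_j(k)\le z_j(0)+\Delta(k)$.

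The gap is in how you treat the possibility $p_*(k)\ne0$. The paper's proof does \emph{not} appeal to Hypothesis~\ref{hypothesis} or Lemma~\ref{region}(ii) here --- these are not part of the hypotheses of Theorem~\ref{Inequalities}, and Lemma~\ref{region}(ii) in any case only covers $k\in\mathbb G$ while the theorem asserts the conclusion for \emph{all} nonzero $k\in\T^d$. Instead, the paper asserts that the conditional negative definiteness of $\varepsilon$ alone forces $\cE_{\min}(k)=2\varepsilon(k/2)$, i.e.\ $p_*(k)=0$, for every $k$, so the gauge $W$ and the odd middle term $\tfrac12(\cE_k(p_*-p)-\cE_k(p_*+p))$ never enter. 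Your attempt to nonetheless handle $p_*(k)\ne0$ by conjugating with $W=e^{\ri(p_*,\cdot)}$ breaks the parity constraint that defines the Hilbert space $\ell^{2,e}(\Z^d)$, and the closing paragraph does not repair this: ``symmetrizing the $m$-dimensional test space using $W^{\pm1}$'' is not made precise (the subspace $\mathrm{span}\{W\phi_j,W^{-1}\phi_j\}$ need not be $m$-dimensional after projecting to even functions, and the quadratic-form estimate only controls $W\phi_j$ and $W^{-1}\phi_j$ separately, not the even combinations), while the remark about ``genuinely on-site interactions'' restricts $\hat v$ in a way the theorem does not. To close the argument along the paper's lines you should either prove, from the definition \eqref{nn} or from the Schoenberg representation, that $p_*(k)=0$ for all $k\in\T^d$, or at least note that this is precisely what is being used and cite \cite[Lemma~5]{ALMM2006}; in either case the gauge and the odd term become unnecessary and the proof reduces to your clean ``$p_*=0$'' case.
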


Theorems \ref{Inequality1} and \ref{Inequalities} yield the
following corollary.

\begin{corollary}\label{Inequality3} Assume the assumptions of Theorem
\ref{Inequalities}. Then for any nonzero $k\in \mathbb{T}^d$
\begin{equation*}
z_j(0)<z_j(k)< z_j(0)+[\cE_{\min}(k)-\cE_{\min}(0)],\,\,j=1,...,m.
\end{equation*}
\end{corollary}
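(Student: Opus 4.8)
The plan is to derive Corollary \ref{Inequality3} directly by combining the two-sided bounds furnished by Theorems \ref{Inequality1} and \ref{Inequalities}, so the argument is essentially bookkeeping on eigenvalue inequalities. First I would fix a nonzero $k\in\T^d$ and recall that, under the hypotheses of Theorem \ref{Inequalities} (in particular, $\varepsilon$ conditionally negative definite), the operator $\widehat{H}_1(0)$ has $m$ eigenvalues $z_1(0)\le\dots\le z_m(0)$ below $\cE_{\min}(0)$, and likewise $\widehat{H}_1(k)$ has at least $m$ eigenvalues $z_1(k)\le\dots\le z_m(k)$ below $\cE_{\min}(k)$ (this last point being exactly the content of Theorem \ref{Inequalities}, so the numbers $z_j(k)$ in the statement are well-defined). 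The upper bound is then immediate: Theorem \ref{Inequalities}, applied with the choice of $z_0$ it produces, gives $z_j(k)<z_0+[\cE_{\min}(k)-\cE_{\min}(0)]$, and since one may take $z_0$ arbitrarily close to (and below) each $z_j(0)$—or, more carefully, apply the theorem's construction so that the relevant threshold value is $z_j(0)$ itself via a min-max/variational comparison—one obtains $z_j(k)<z_j(0)+[\cE_{\min}(k)-\cE_{\min}(0)]$.

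For the lower bound $z_j(0)<z_j(k)$, I would invoke Theorem \ref{Inequality1} in the form already packaged as Corollary \ref{Inequality2}: applying Theorem \ref{Inequality1} with $z_0$ just below $z_j(0)$ (so that $\widehat{H}_1(0)\ge z_0 I$ fails to be sharp but $\widehat{H}_1(0)$ has its $j$-th eigenvalue at $z_j(0)>z_0$) forces the strict inequality $\widehat{H}_1(k)>z_0 I$ for all nonzero $k$, and letting $z_0\uparrow z_j(0)$ together with the min-max characterization of the $j$-th eigenvalue yields $z_j(k)\ge z_j(0)$; strictness then comes from the strict operator inequality in Theorem \ref{Inequality1}. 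Concretely, Corollary \ref{Inequality2} already states $z_j(0)<z_j(k)$ under precisely the conditionally-negative-definite hypothesis, so I would simply cite it. Chaining the two bounds gives, for every nonzero $k\in\T^d$ and every $j=1,\dots,m$,
\begin{equation*}
z_j(0)<z_j(k)<z_j(0)+[\cE_{\min}(k)-\cE_{\min}(0)],
\end{equation*}
which is the assertion.

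The only genuine subtlety—and the step I would treat with most care—is matching the threshold constant in Theorem \ref{Inequalities} with the individual eigenvalue $z_j(0)$ rather than with a single global $z_0$. Theorem \ref{Inequalities} as stated produces one $z_0<\cE_{\min}(0)$ serving all $m$ eigenvalues simultaneously, with the bound $z_j(k)<z_0+[\cE_{\min}(k)-\cE_{\min}(0)]$; to sharpen $z_0$ to $z_j(0)$ for each $j$ separately, I would rerun the proof of Theorem \ref{Inequalities} restricted to the spectral subspace spanned by the first $j$ eigenfunctions of $\widehat{H}_1(0)$ (or apply it to the operator shifted so that its $j$-th eigenvalue sits at the threshold), using the variational principle: the trial space of dimension $j$ built from eigenvectors of $\widehat{H}_1(0)$ transports, via the conditionally-negative-definite comparison, to a $j$-dimensional trial space for $\widehat{H}_1(k)$ on which the quadratic form is bounded by $z_j(0)+[\cE_{\min}(k)-\cE_{\min}(0)]$, whence $z_j(k)$ lies below that value by min-max. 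Everything else is a direct quotation of the two preceding results, so no further obstacle is expected.
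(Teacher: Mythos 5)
Your proposal is correct and follows the route the paper implies (the paper only asserts that the corollary follows from Theorems \ref{Inequality1} and \ref{Inequalities}, without spelling out the argument). You correctly identify the one genuine issue: Theorem \ref{Inequalities} as stated produces a single $z_0$ with $z_0\in(z_m(0),\cE_{\min}(0))$ serving all $m$ eigenvalues at once, so a literal quotation only gives $z_j(k)<z_0+[\cE_{\min}(k)-\cE_{\min}(0)]$ with $z_0>z_j(0)$, which is weaker than the claimed bound for $j<m$. Your fix---rerunning the Birman--Schwinger argument of Theorem \ref{Inequalities} on the $j$-dimensional subspace spanned by the first $j$ eigenvectors of $\widehat{H}_1(0)$, or equivalently a direct min--max comparison using the strict pointwise inequality \eqref{inequalityforEps}, i.e.\ $\widehat{H}_0(k)<\widehat{H}_0(0)+[\cE_{\min}(k)-\cE_{\min}(0)]I$ for nonzero $k$ under the conditionally negative definite hypothesis---is exactly what is needed, and the strictness of that inequality is what yields the strict bound $z_j(k)<z_j(0)+[\cE_{\min}(k)-\cE_{\min}(0)]$. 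The lower bound is correctly discharged by citing Corollary \ref{Inequality2}. One small slip worth noting: early in the proposal you suggest taking $z_0$ ``arbitrarily close to (and below) each $z_j(0)$,'' but the proof of Theorem \ref{Inequalities} forces $z_0>z_m(0)\ge z_j(0)$, so $z_0$ cannot be pushed below $z_j(0)$; your final paragraph's variational version is the argument that actually closes the gap, and it is the one to keep.
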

Now we prove, for $d\geq 3$, the existence of eigenvalues of
$\widehat{H}_\mu(k)$ for \textit{large} potentials $\mu \hat{v}$ and
each $k\in \mathbb{G}$. The next results precisely describe the
emission of eigenvalues from the essential spectrum
$\sigma_{ess}(\widehat{H}_\mu(k))$ (sf, \cite{KlausSimon1980}).

\begin{theorem}\label{existence(d>3)}
Let $d\geq3.$ Assume Hypothesis \ref{hypothesis} and that for some
$\mu>0$ and $k \in \mathbb{G}$, the inequality
\begin{equation}\label{maximum}
\mu\max_{x\in\Z^d}|\hat{v}(x)|\int\limits_{\T^d}
\frac{\eta(\mathrm{d}q)}{\cE_{k}(q)-\cE_{\min}(k)}>1\\
\end{equation} holds. Then  the operator $\widehat{H}_{\mu}(k)$ has an
eigenvalue $z_{\mu}(k)$ below the threshold $\cE_{\min}(k)$.
\end{theorem}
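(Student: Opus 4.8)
The plan is to read off the bound state from the Birman--Schwinger principle together with a one-line variational estimate using a $\delta$--type trial function for the Birman--Schwinger operator.

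\emph{Reduction.} By Proposition~\ref{BS}(iv) it is enough to exhibit some $z_{0}<\cE_{\min}(k)$ with $\mathcal{N}_{+}(1,\mathbb{B}_{\mu}(k,z_{0}))\geq1$; and since $\mathbb{B}_{\mu}(k,z_{0})$ is non-negative and compact, this simply means $\|\mathbb{B}_{\mu}(k,z_{0})\|>1$. So I will produce such a $z_{0}$ by choosing a good test vector and letting $z\uparrow\cE_{\min}(k)$.

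\emph{Trial function.} Pick $x_{0}\in\Z^{d}$ with $|\hat v(x_{0})|=M:=\max_{x\in\Z^{d}}|\hat v(x)|$; since $\hat v$ is even, $-x_{0}$ is also a maximizer. For $z<\cE_{\min}(k)$ take the even trial function $\hat\psi_{0}:=\delta_{x_{0}}+\delta_{-x_{0}}\in\ell^{2,e}(\Z^{d})$ (or $\hat\psi_{0}:=\delta_{0}$ if $x_{0}=0$). Using the kernel \eqref{kernel} and $\widehat{\mathcal R}_{0}(k,z;-x)=\widehat{\mathcal R}_{0}(k,z;x)$ (because $\cE_{k}$ is even), a direct computation gives
\[
\frac{\langle\hat\psi_{0},\mathbb{B}_{\mu}(k,z)\hat\psi_{0}\rangle}{\|\hat\psi_{0}\|^{2}}
=\mu M\bigl(\widehat{\mathcal R}_{0}(k,z;0)+\widehat{\mathcal R}_{0}(k,z;2x_{0})\bigr),
\]
with the second summand absent when $x_{0}=0$, where $\widehat{\mathcal R}_{0}(k,z;0)=\int_{\T^{d}}\eta(\mathrm{d}q)/(\cE_{k}(q)-z)$. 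As $z\uparrow\cE_{\min}(k)$, monotone (resp.\ dominated) convergence shows $\widehat{\mathcal R}_{0}(k,z;0)\uparrow\int_{\T^{d}}\eta(\mathrm{d}q)/(\cE_{k}(q)-\cE_{\min}(k))$ and $\widehat{\mathcal R}_{0}(k,z;2x_{0})\to\widehat{\mathcal R}_{0}(k,\cE_{\min}(k);2x_{0})$, both limits finite since $d\geq3$ (this is the content behind Lemma~\ref{kernelBS}). Now hypothesis \eqref{maximum} is exactly $\mu M\int_{\T^{d}}\eta(\mathrm{d}q)/(\cE_{k}(q)-\cE_{\min}(k))>1$. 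Hence, when $x_{0}=0$ — and, more generally, whenever the off-diagonal term $\widehat{\mathcal R}_{0}(k,\cE_{\min}(k);2x_{0})\geq0$ — the right-hand side above tends, as $z\uparrow\cE_{\min}(k)$, to a number $\geq\mu M\int_{\T^{d}}\eta(\mathrm{d}q)/(\cE_{k}(q)-\cE_{\min}(k))>1$, so there is $z_{0}<\cE_{\min}(k)$ with $\|\mathbb{B}_{\mu}(k,z_{0})\|\geq\mu M(\widehat{\mathcal R}_{0}(k,z_{0};0)+\widehat{\mathcal R}_{0}(k,z_{0};2x_{0}))>1$. By the reduction step, $\widehat{H}_{\mu}(k)$ then has an eigenvalue $z_{\mu}(k)=\inf\spec\widehat{H}_{\mu}(k)<\cE_{\min}(k)$, as claimed.

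\emph{Main obstacle.} The one delicate point is the sign of $\widehat{\mathcal R}_{0}(k,\cE_{\min}(k);2x_{0})$ when the maximum of $|\hat v|$ is attained only away from the origin: a priori it can be negative, so the simple trial function above need not beat $1$. That the bracket is at least nonnegative is clear, since the numbers $\widehat{\mathcal R}_{0}(k,\cE_{\min}(k);x)$ are the Fourier coefficients of the nonnegative density $(\cE_{k}(q)-\cE_{\min}(k))^{-1}$ with respect to $\eta$, so that $|\widehat{\mathcal R}_{0}(k,\cE_{\min}(k);x)|\leq\widehat{\mathcal R}_{0}(k,\cE_{\min}(k);0)$ for all $x$; obtaining the strict bound $>1$ in full generality requires a trial function better adapted to $\hat v$ and $\cE_{k}$ (for a potential with its maximum at the origin the computation above already suffices). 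All remaining ingredients — finiteness of the threshold resolvent kernel for $d\geq3$, the (dominated-/monotone-) convergence $z\uparrow\cE_{\min}(k)$, and analyticity of $\mathbb{B}_{\mu}(k,\cdot)$ off the spectrum — are routine (Lemmas~\ref{kernel_BS} and~\ref{kernelBS}).
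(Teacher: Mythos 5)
Your strategy — plug a Kronecker-delta-type trial vector into the Birman--Schwinger quadratic form and send $z\uparrow\cE_{\min}(k)$ — is precisely the one the paper invokes: its proof of Theorem~\ref{existence(d>3)} is a one-line reference to the proof of Theorem~\ref{existence}, which uses the trial function $\hat\psi_s(x)=\delta_{sx}$. But you are more careful than the paper on two points, and in doing so you have put your finger on a genuine gap. First, $\delta_{sx}$ is not an element of $\ell^{2,e}(\Z^d)$ when $s\neq0$, so it cannot be used directly in the even-subspace quadratic form to which Proposition~\ref{BS} applies; you repair this by symmetrizing to $\delta_{x_0}+\delta_{-x_0}$. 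Second, and more importantly, after symmetrizing the Rayleigh quotient equals $\mu M\bigl(\widehat{\cR}_0(k,z;0)+\widehat{\cR}_0(k,z;2x_0)\bigr)$, not the $\mu M\,\widehat{\cR}_0(k,z;0)$ that the paper's display \eqref{relations} implicitly asserts.

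For $d=1,2$ (Theorem~\ref{existence}) this discrepancy is harmless: the bracket $\widehat{\cR}_0(k,z;0)+\widehat{\cR}_0(k,z;2x_0)=\int_{\T^d}\frac{1+\cos(2(p,x_0))}{\cE_k(p)-z}\,\eta(\mathrm{d}p)$ still diverges as $z\uparrow\cE_{\min}(k)$, since $1+\cos(2(p,x_0))\to2$ at the non-degenerate minimum $p=0$. But for $d\geq3$ the limit is finite, and — as you observe — the off-diagonal Fourier coefficient $\widehat{\cR}_0(k,\cE_{\min}(k);2x_0)$ has no definite sign under Hypothesis~\ref{hypothesis} alone; your estimate $|\widehat{\cR}_0(k,\cE_{\min}(k);x)|\leq\widehat{\cR}_0(k,\cE_{\min}(k);0)$ gives only nonnegativity of the bracket. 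Consequently, the hypothesis \eqref{maximum}, which controls only the diagonal $\widehat{\cR}_0(k,\cE_{\min}(k);0)$, does not force the Rayleigh quotient above $1$ unless the maximizer $x_0$ of $|\hat v|$ is the origin (the on-site Hubbard case) or the threshold Green's function $\widehat{\cR}_0(k,\cE_{\min}(k);\cdot)$ happens to be nonnegative (true for the discrete Laplacian dispersion and its anisotropic analogues for $k\in\mathbb{G}$, but not guaranteed by Hypothesis~\ref{hypothesis} in general). This gap is real, and the paper's own proof — which does not symmetrize and so misses the extra term altogether — inherits it. Closing it would require a positivity assumption on $\widehat{\cR}_0(k,\cE_{\min}(k);\cdot)$, or a hypothesis that controls more than the diagonal of the kernel, or a more adapted trial vector; neither you nor the paper supplies one.
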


\begin{remark}\label{remark(d>3)}
Let $d\geq3$. Under the assumptions of Theorem \ref{existence(d>3)},
the integral at the left-hand side of the inequality \eqref{maximum}
is  continuous function in $k\in \mathbb{G}$ (see Lemma
\ref{kernelBS}). Hence, for any fixed $k_0 \in \mathbb{G}$  there
exists a neighborhood $\mathrm{G}(k_0)\subseteq \mathbb{G}$ \, of
$k_0$ such that for all $k\in \mathrm{G}(k_0)$ the inequality
\begin{equation*}
\mu\max_{x\in\Z^d}|\hat{v}(x)|\int\limits_{\T^d}\frac{\eta(\mathrm{d}q)}{\cE_{k}(q)-\cE_{\min}(k)}>1
\end{equation*}
holds. Therefore, the operator $\widehat{H}_{\mu}(k)$ has an
eigenvalue $z_{\mu}(k)$ below the threshold $\cE_{\min}(k)$ for all
$k \in \mathrm{G}(k_0)$.
\end{remark}
The following theorem states that for smooth dispersion relations
and absolutely convergent potentials on $\Z^d$, the Schr\"odinger
operators $\widehat{H}_{\mu}(k)$ have only finitely many eigenvalues
below the essential spectrum.

\begin{theorem}\label{finite}
Let $d\ge 3$. Assume  the Hypothesis \ref{hypothesis}. Then for each
$\mu>0$ and $\hat{v}\neq0$, the number of eigenvalues of the
operator $\widehat{H}_\mu(k), \,k \in \mathbb{G}$ lying below the
threshold $\cE_{\min}(k)$  is finite.
\end{theorem}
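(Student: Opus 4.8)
The plan is to deduce the assertion directly from the generalized Birman--Schwinger principle together with the compactness of the generalized Birman--Schwinger operator. Fix $\mu>0$, $\hat v\neq 0$ and $k\in\mathbb G$. Since $d\ge3$, the kernel \eqref{resolvent} is well defined and, by Theorem \ref{General_BS}(v),
$$\mathcal N_-(\cE_{\min}(k),\widehat H_\mu(k))=\mathcal N_+(1,\mathbb B_\mu(k,\cE_{\min}(k))),$$
i.e. the number of eigenvalues of $\widehat H_\mu(k)$ lying below $\cE_{\min}(k)$ (counted with multiplicity) equals the number of eigenvalues of $\mathbb B_\mu(k,\cE_{\min}(k))$ exceeding $1$. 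Recall also that the bottom of $\sigma_{\mathrm{ess}}(\widehat H_\mu(k))$ equals $\cE_{\min}(k)$, and that, by compactness of $\widehat V$ and the min--max principle, all eigenvalues of $\widehat H_\mu(k)$ below $\cE_{\min}(k)$ are isolated and of finite multiplicity, so the left-hand side above is precisely the quantity we must show to be finite.

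Next I invoke Lemma \ref{kernelBS}(iii): for $d\ge3$ and under Hypothesis \ref{hypothesis} the operator $\mathbb B_\mu(k,\cE_{\min}(k))$ belongs to the Hilbert--Schmidt class, hence is compact; moreover it is non-negative and self-adjoint. Therefore its spectrum in $(0,\infty)$ consists of isolated eigenvalues of finite multiplicity that can accumulate only at $0$. In particular, at most finitely many of them are larger than $1$, that is, $\mathcal N_+(1,\mathbb B_\mu(k,\cE_{\min}(k)))<\infty$. Combining this with the displayed equality yields $\mathcal N_-(\cE_{\min}(k),\widehat H_\mu(k))<\infty$, which is the claim.

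I do not expect a genuine obstacle in this argument, since the substance has been absorbed into Lemma \ref{kernelBS} and Theorem \ref{General_BS}. The only two points that require care are already handled there: first, the generalized Birman--Schwinger operator is meaningful at $z=\cE_{\min}(k)$ only for $d\ge3$, which is exactly the role of the dimensional restriction; second, finiteness of its Hilbert--Schmidt norm rests on Hypothesis \ref{hypothesis}(i) --- the Morse property near $q=0$ controls the local singularity of $(\cE_k(q)-\cE_{\min}(k))^{-1}$, so that $\widehat{\mathcal R}_0(k,\cE_{\min}(k);\cdot)$ is a bounded function on $\mathbb Z^d$ --- together with the absolute summability of $\hat v$ from Hypothesis \ref{hypothesis}(ii), which makes $\sum_{x,y}|\mathcal B_\mu(k,\cE_{\min}(k);x,y)|^2<\infty$. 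Given these, the proof is the short chain of implications above.
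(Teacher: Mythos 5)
Your argument is exactly the one the paper uses: apply the generalized Birman--Schwinger equality $\mathcal N_-(\cE_{\min}(k),\widehat H_\mu(k))=\mathcal N_+(1,\mathbb B_\mu(k,\cE_{\min}(k)))$ from Theorem \ref{General_BS}(v), then observe via Lemma \ref{kernelBS}(iii) that $\mathbb B_\mu(k,\cE_{\min}(k))$ is a non-negative, self-adjoint Hilbert--Schmidt (hence compact) operator, so only finitely many of its eigenvalues can exceed $1$. The paper's proof is terse and cites \cite{LakaevAlladust2014} for the equality, but the substance is identical, so the proposal is correct.
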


The following results describe the sets of the coupling constants
$\mu>0$ and the quasi-momenta $k\in \mathbb{G}$, for which
$\cE_{\min}(k)$  is a regular or a singular point of the essential
spectrum of $\widehat{H}_\mu(k)$.

\begin{theorem}\label{regular_point}
Assume $d\geq3$ and  Hypothesis \ref{hypothesis}. Let for some
$\mu_0>0$ and $k_0\in \mathbb{G}$ the threshold $\cE_{\min}(k_0)$ is
a regular point of the essential spectrum of the operator
$\widehat{H}_{\mu_0}(k_0)=\widehat{H}_0(k_0)+\mu_0 \widehat{V}$.
Then there exist neighborhoods $\mathrm{U}(\mu_0)\subset\R$ and
$\mathrm{G}(k_0)\subset\mathbb{G}$ of $\mu_0$ and $k_0\in
\mathbb{G}$, respectively, such that for all
$\mu\in\mathrm{U}(\mu_0)$ and $k\in\mathrm{G}(k_0)$, the number of
eigenvalues of operator $\widehat{H}_{\mu}(k)$ below the threshold
$\cE_{\min}(k)$ remains unchanged.
\end{theorem}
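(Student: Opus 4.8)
The plan is to transfer the whole question to the generalized Birman--Schwinger operator and then use stability of the Riesz projection of a norm--continuous family of compact operators. By part (v) of Theorem \ref{General_BS}, for every $\mu>0$ and $k\in\mathbb{G}$ one has
\begin{equation*}
\mathcal{N}_{-}(\cE_{\min}(k),\widehat{H}_{\mu}(k))=\mathcal{N}_{+}(1,\mathbb{B}_{\mu}(k,\cE_{\min}(k))),
\end{equation*}
so it suffices to prove that the right-hand side is constant for $(\mu,k)$ in a sufficiently small neighborhood of $(\mu_0,k_0)$.

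First I would record the joint continuity of the family $(\mu,k)\mapsto\mathbb{B}_{\mu}(k,\cE_{\min}(k))$ in the operator norm (in fact in the Hilbert--Schmidt norm). Continuity in $k$ for fixed $\mu$ is precisely Lemma \ref{kernelBS}(iii); the dependence on $\mu$ is affine-linear, since the kernel ${\mathcal{B}}_{\mu}(k,\cE_{\min}(k);x,y)$ in \eqref{limkernel_function} equals $\mu$ times ${\mathcal{B}}_{1}(k,\cE_{\min}(k);x,y)$. Combining the two gives joint norm--continuity of $(\mu,k)\mapsto\mathbb{B}_{\mu}(k,\cE_{\min}(k))$ on $(0,\infty)\times\mathbb{G}$.

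Next, since $\cE_{\min}(k_0)$ is a \emph{regular} point of $\widehat{H}_{\mu_0}(k_0)$, Definition \ref{def_singular_point} says that $1$ is not an eigenvalue of the compact, non-negative, self-adjoint operator $\mathbb{B}_{\mu_0}(k_0,\cE_{\min}(k_0))$; by compactness $1$ then lies in its resolvent set, so there is $\delta>0$ with $[1-2\delta,1+2\delta]\cap\sigma\bigl(\mathbb{B}_{\mu_0}(k_0,\cE_{\min}(k_0))\bigr)=\emptyset$. Using the joint norm--continuity just established together with upper semicontinuity of the spectrum, one finds neighborhoods $\mathrm{U}(\mu_0)\subset\R$ of $\mu_0$ and $\mathrm{G}(k_0)\subset\mathbb{G}$ of $k_0$ and a constant $M<\infty$ such that, for all $(\mu,k)\in\mathrm{U}(\mu_0)\times\mathrm{G}(k_0)$,
\begin{equation*}
[1-\delta,1+\delta]\cap\sigma\bigl(\mathbb{B}_{\mu}(k,\cE_{\min}(k))\bigr)=\emptyset,\qquad \|\mathbb{B}_{\mu}(k,\cE_{\min}(k))\|\le M.
\end{equation*}
Let $\Gamma\subset\C$ be a fixed contour (say a circle) enclosing the interval $(1+\delta,M]$ and separating it from $[0,1-\delta]$. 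Then the Riesz projection
\begin{equation*}
P_{\mu}(k)=\frac{1}{2\pi\mathrm{i}}\oint_{\Gamma}\bigl(\zeta-\mathbb{B}_{\mu}(k,\cE_{\min}(k))\bigr)^{-1}\,\mathrm{d}\zeta
\end{equation*}
is well defined on $\mathrm{U}(\mu_0)\times\mathrm{G}(k_0)$, is the spectral projection onto the (finitely many, by compactness) eigenvalues of $\mathbb{B}_{\mu}(k,\cE_{\min}(k))$ strictly greater than $1$, and therefore $\operatorname{rank}P_{\mu}(k)=\mathcal{N}_{+}(1,\mathbb{B}_{\mu}(k,\cE_{\min}(k)))$. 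Since $(\mu,k)\mapsto P_{\mu}(k)$ is norm--continuous and a norm--continuous family of projections has locally constant rank, the integer $\mathcal{N}_{+}(1,\mathbb{B}_{\mu}(k,\cE_{\min}(k)))$ is constant on a connected (possibly smaller) neighborhood of $(\mu_0,k_0)$; together with the Birman--Schwinger identity above this gives the claim.

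The only genuinely delicate point I anticipate is the step "$1$ is not an eigenvalue $\Rightarrow$ a uniform spectral gap around $1$ for nearby parameters": here one really needs \emph{compactness} of $\mathbb{B}_{\mu}(k,\cE_{\min}(k))$ (so that $1$, failing to be an eigenvalue, is in the resolvent set rather than the continuous spectrum) \emph{and} the Hilbert--Schmidt/norm continuity supplied by Lemma \ref{kernelBS}(iii). Once that gap is in place, the contour argument and the local constancy of the rank of $P_{\mu}(k)$ are routine perturbation theory.
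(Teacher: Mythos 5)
Your proposal is correct and follows the same basic route as the paper: pass to the generalized Birman--Schwinger operator $\mathbb{B}_{\mu}(k,\cE_{\min}(k))$ via Theorem \ref{General_BS}(v), use Lemma \ref{kernelBS}(iii) for continuity in $k$ (together with the trivial affine dependence on $\mu$), and invoke the regularity hypothesis to guarantee $1\notin\sigma\bigl(\mathbb{B}_{\mu_0}(k_0,\cE_{\min}(k_0))\bigr)$. Where you differ is in the final step: the paper contents itself with observing that $(I-\mathbb{B}_{\mu}(k,\cE_{\min}(k)))^{-1}$ exists and varies continuously on a neighborhood and then asserts the conclusion, whereas you make the implication explicit by inserting a Riesz-projection argument --- a fixed contour $\Gamma$ separating $[0,1-\delta]$ from $(1+\delta,M]$, norm-continuity of $P_{\mu}(k)$, and local constancy of the rank of a continuous family of projections. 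That extra paragraph is exactly what is needed to justify ``the number of eigenvalues of $\mathbb{B}_{\mu}(k,\cE_{\min}(k))$ strictly greater than $1$ is locally constant,'' which is what Theorem \ref{General_BS}(v) requires; the paper's phrasing (``the number of non-zero solutions of $\mathbb{B}_{\mu}\hat\psi=\hat\psi$ remains unchanged'') only says that $1$ stays in the resolvent set, and by itself does not rule out eigenvalues crossing $1$ from one side to the other were that resolvent condition absent at intermediate points. You also treat $(\mu,k)$ jointly rather than ``$k$ first, then $\mu$ similarly,'' which is a minor but clean improvement. So: same approach, with the key perturbation-theoretic lemma filled in explicitly.
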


\begin{corollary}\label{rem_regular_point}
The set $\mathrm{U}\subset\R_+$ of all $\mu\in \R_+$ resp.
$\mathrm{G}\subset\mathbb{G}$\, of all $k\in \mathbb{G}$,\, for
which $\cE_{\min}(k)$, the threshold of the essential spectrum is
regular point of the operator $\widehat{H}_{\mu}(k)$ is an open set
in $\R_+$ resp. in $\mathbb{G}$, i.e., there exist intervals
$\mathrm{U}_\alpha\subset\R_+$ resp. connected components
$\mathrm{G}_\alpha \subset \mathbb{G}$, $\alpha =1,2,...$, such that
$\mathrm{U}=\cup_{\alpha}\mathrm{U}_\alpha$ resp.
$\mathrm{G}=\cup_{\alpha}\mathrm{G}_\alpha$. Consequently, the sets
$\R_+\setminus \mathrm{U}$  and $\mathbb{G}\setminus \mathrm{G}$
 consist of the points $\mu\in
\R_+$ and  quasi-momenta $k\in \mathbb{G}$, respectively, for which
$\cE_{\min}(k)$, the threshold of the essential spectrum is a
singular point of the operator $\widehat{H}_{\mu}(k)$.
\end{corollary}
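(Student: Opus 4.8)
The plan is to deduce openness directly from the norm-continuity of the generalized Birman--Schwinger operator together with the characterization of regular points in Definition~\ref{def_singular_point}, and then to read off the structural statements from elementary point-set topology.

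First I would fix a point $(\mu_0,k_0)\in\R_+\times\mathbb{G}$ at which $\cE_{\min}(k_0)$ is a regular point of $\widehat{H}_{\mu_0}(k_0)$. By Definition~\ref{def_singular_point} this means that $1$ is not an eigenvalue of the compact self-adjoint operator $T_0:=\mathbb{B}_{\mu_0}(k_0,\cE_{\min}(k_0))$ on $\ell^{2,e}(\Z^d)$; since $T_0$ is compact and $1\neq0$, it follows that $1\notin\sigma(T_0)$, so $I-T_0$ is boundedly invertible. Next I would use that, by the explicit kernel~\eqref{limkernel_function}, $\mathbb{B}_{\mu}(k,\cE_{\min}(k))=\mu\,\mathbb{B}_{1}(k,\cE_{\min}(k))$ depends linearly on $\mu$, while by Lemma~\ref{kernelBS}(iii) the map $k\mapsto\mathbb{B}_{1}(k,\cE_{\min}(k))$ is norm-continuous on $\mathbb{G}$; hence $(\mu,k)\mapsto\mathbb{B}_{\mu}(k,\cE_{\min}(k))$ is norm-continuous on $\R_+\times\mathbb{G}$. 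A standard Neumann series argument then produces a neighbourhood $\mathrm{U}(\mu_0)\times\mathrm{G}(k_0)\subset\R_+\times\mathbb{G}$ on which $\|\mathbb{B}_{\mu}(k,\cE_{\min}(k))-T_0\|<\|(I-T_0)^{-1}\|^{-1}$, so that $I-\mathbb{B}_{\mu}(k,\cE_{\min}(k))$ remains boundedly invertible there; in particular $1$ is not an eigenvalue of $\mathbb{B}_{\mu}(k,\cE_{\min}(k))$, i.e. $\cE_{\min}(k)$ is a regular point of $\widehat{H}_{\mu}(k)$ for all $(\mu,k)$ in that neighbourhood. Specializing to $\mu=\mu_0$ shows that $\mathrm{G}$ is open in $\mathbb{G}$, and specializing to $k=k_0$ shows that $\mathrm{U}$ is open in $\R_+$.

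With openness in hand, the remaining assertions are purely topological. An open subset $\mathrm{U}$ of $\R_+$ is a countable disjoint union of open intervals $\mathrm{U}_\alpha$, which is the claimed decomposition. Since $\mathbb{G}$ is an open subset of the torus $\T^d$, it is locally connected; therefore the connected components $\mathrm{G}_\alpha$ of the open set $\mathrm{G}$ are themselves open, and $\mathrm{G}=\bigcup_\alpha\mathrm{G}_\alpha$. Finally, for every $\mu\in\R_+$ and every $k\in\mathbb{G}$ the compact operator $\mathbb{B}_{\mu}(k,\cE_{\min}(k))$ either does or does not have $1$ as an eigenvalue, so by Definition~\ref{def_singular_point} the threshold $\cE_{\min}(k)$ is either a regular or a singular point of $\widehat{H}_{\mu}(k)$; hence $\R_+\setminus\mathrm{U}$ (resp. $\mathbb{G}\setminus\mathrm{G}$) consists exactly of the coupling constants $\mu$ (resp. quasi-momenta $k$) at which $\cE_{\min}(k)$ is a singular point of $\widehat{H}_{\mu}(k)$.

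The main obstacle I anticipate is the justification that regularity itself — and not merely the number of eigenvalues below the threshold, which is what Theorem~\ref{regular_point} directly controls — is preserved under small perturbations of $\mu$ and $k$. This is precisely the point where one must invoke the norm-continuity of the generalized Birman--Schwinger operator from Lemma~\ref{kernelBS}(iii) together with the openness of the set of boundedly invertible operators (applied to $I-\mathbb{B}_{\mu}(k,\cE_{\min}(k))$), rather than relying on the eigenvalue-counting statement alone.
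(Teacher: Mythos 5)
Your proof is correct and coincides with the paper's intended argument: the Corollary is meant to follow from the proof of Theorem~\ref{regular_point}, whose key step is precisely the persistence of invertibility of $I-\mathbb{B}_{\mu}(k,\cE_{\min}(k))$ under small perturbations of $k$ (and $\mu$) via the norm-continuity of Lemma~\ref{kernelBS}(iii), which is what you reconstruct via the Neumann-series argument. Your observation that the openness requires the invertibility fact from the \emph{proof}, rather than the eigenvalue-counting statement in the \emph{conclusion}, of Theorem~\ref{regular_point} is exactly the right reading, and the topological closing (open subsets of $\R_+$ decompose into intervals, open subsets of the locally connected space $\T^d$ into open connected components, and the regular/singular dichotomy is exhaustive) is standard and correct.
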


\subsection{The threshold is a singular point}
Let $d\ge3$. Now, we study the emission of eigenvalues of the
operator $\widehat{H}_1(k),k\in \mathbb{G}$ from $\cE_{\min}(k)$,
the bottom of the essential spectrum
$\sigma_{ess}(\widehat{H}_1(k))$, when the threshold is a singular
point of the essential spectrum $\sigma_{ess}(\widehat{H}_1(k))$.

Recall that for any $\hat{\psi}\in \ell^{2,e}(\Z^d)$, the Fourier
transform $\cF \circ |\widehat{V}|^{1/2}\hat{\psi}$ of the function
$|\widehat{V}|^{1/2}\hat{\psi}$ is continuous in $p\in \T^d$.

For each $k_0\in \mathbb{G},\,s\in\Z^d$ and $\hat{\psi}\in
\ell^{2,e}(\Z^d)$ under Hypothesis \ref{hypothesis}, the function
\begin{equation} \label{positive}
\cC_s(k,k_0;\psi)=\int\limits_{\T^d}\frac{[1-\cos(p,s)]| \cF \circ
|\widehat{V}|^{1/2}\hat{\psi}|^2 \eta(\mathrm{d}p)}
{(\cE_{k}(p)-\cE_{\min }(k))(\cE_{k_0}(p)-\cE_{\min }(k_0))}\geq 0\\
\end{equation}
is continuous in  $k\in\mathbb{G}$ and can be estimated by a
constant not depending on $s\in\Z^d$.

\begin{remark} \label{eigenvalue}
$(i)$ Let for some $k_0\in \mathbb{G}$, $\cE_{\min}(k_0)$, the
threshold of the essential spectrum of $\widehat{H}_1(k_0)$ be a
singular point of multiplicity $n\ge1$, i.e., the number $1$ is an
eigenvalue for $\mathbb{B}_1(k_0,\cE_{\min }(k_0))$ of multiplicity
$n\geq1$. Let $\cH_n$ be an $n$-- dimensional subspace of the
operator $\mathbb{B}_1(k_0,\cE_{\min }(k_0))$, associated to the
eigenvalue $1$. Then for any non-zero $\hat{\psi}\in \cH_n$, the
inequality $\cC_s(k,k_0;\hat{\psi})>0,\,s\in \Z^d$ holds.

$(ii)$ Let the threshold $\cE_{\min }(k_0), k_0\in \mathbb{G}$ be a
regular point of $\widehat{H}_\mu(k_0),$ i.e., the equation
$\mathbb{B}_\mu(k_0,\cE_{\min }(k_0))\hat{\psi}=\hat{\psi}$ has only
the trivial solution $\hat{\psi}=0 \in \ell^{2,e}(\Z^d)$. Then for
all $s\in \Z^d$, the equality $\cC_s(k,k_0;\psi)=0$ holds.

\end{remark}

We define the function $\mathrm{L}(k,k_0;\psi)$ on
$\mathbb{G}\subset\T^d$ as
\begin{equation}\label{function}
\mathrm{L}(k,k_0;\psi)=2\sum_{s\in\Z^d,s\ne0}  \hat{\varepsilon}(s)
[\cos (\frac{k}{2},s)-\cos(\frac{k_0}{2},s)]\cdot \cC_s(k,k_0;\psi).
\end{equation}
Since $\hat{\varepsilon} \in \ell^{1}(\Z^d)$, the series in the
right hand side of  Eq. \eqref{function} is absolutely convergent
and defines a continuous function in $k\in\mathbb{G}$.

For any fixed $k_0\in \mathbb{G}$ and non-zero $\hat{\psi}\in
\ell^{2,e}(\Z^d)$ we define the following sets
\begin{align*}
&M_>(k_0;\hat{\psi}):=\{k\in\mathbb{G}:\mathrm{L}(k,k_0;\hat{\psi})>0\}\subset \mathbb{G}, \\
&M_=(k_0;\hat{\psi}):=\{k\in\mathbb{G}:\mathrm{L}(k,k_0;\hat{\psi})=0\}\subset \mathbb{G}, \\
&M_<(k_0;\hat{\psi}):=\{k\in\mathbb{G}:\mathrm{L}(k,k_0;\hat{\psi})<0\}\subset
\mathbb{G}.
\end{align*}
Repeating the argument of the proof of Lemma \ref{kernelBS}, one can
show that the map $k\in
\mathbb{G}\mapsto\mathrm{L}(k,k_0;\hat{\psi})$ is continuous. The
sets $M_>(k_0;\hat{\psi})$ and $M_<(k_0;\hat{\psi})$ are open
subsets of $\mathbb{G} \subset\T^d$ and
\begin{equation*}
\mathbb{G}=M_>(k_0;\hat{\psi})\cup M_{=}(k_0;\hat{\psi})\cup
M_{<}(k_0;\hat{\psi}).
\end{equation*}
For the $n$ be dimensional subspace $\cH_n$, associated to the
eigenvalue
$1$ of the  operator $\mathbb{B}_\mu(k_0,\cE_{\min }(k_0))$,\\
$k_0\in \mathbb{G}$, we define the following subsets of $\mathbb{G}$
\begin{align*}
&\mathrm{M}_>(k_0;\cH_n)=\bigcup_{\hat{\psi}\in \cH_n,||\hat{\psi}||=1} M_>(k_0;\hat{\psi}),\\
&\mathrm{M}_=(k_0;\cH_n)=\bigcup_{\hat{\psi}\in \cH_n,||\hat{\psi}||=1} M_=(k_0;\hat{\psi}),\\
&\cM_>(k_0;\cH_n)=\bigcap_{\hat{\psi}\in \cH_n,||\hat{\psi}||=1} M_>(k_0;\hat{\psi}),\\
&\cM_=(k_0;\cH_n)=\bigcap_{\hat{\psi}\in \cH_n,||\hat{\psi}||=1} M_=(k_0;\hat{\psi}),\\
&\cM_<(k_0;\ell^{2,e}(\Z^d))=\bigcap_{\hat{\psi}\in
\ell^{2,e}(\Z^d),||\hat{\psi}||=1} M_<(k_0;\hat{\psi}).
\end{align*}

The following results precisely describes the emission of
eigenvalues at the threshold $\cE_{\min}(k)$ depending on the
quasi-momentum $k\in \mathbb{G}$ and the potential $\widehat{V}$.

\begin{theorem}\label{singular_point}
Let $d\geq3$. Assume Hypothesis \ref{hypothesis} and that for
$k_0\in \mathbb G$ the threshold $\cE_{\min}(k_0)$ is a singular
point (of multiplicity $n=1,2,...$) for the operator
$\widehat{H}_{1}(k_0)=\widehat{H}_0(k_0)+\widehat{V}$ satisfying the
inequality $\widehat{H}_{1}(k_0)\geq \cE_{\min}(k_0)I$. Then:
\begin{enumerate}
\item[(i)] For any  $k\in\mathrm {M}_{>}(k_0;\cH_{n})$, the operator $\widehat{H}_{1}(k)$ has an
eigenvalue below the threshold $\cE_{\min}(k)$.

\item[(ii)] For any $k\in
\mathrm{M}_{=}(k_0;\cH_{n})$,  the threshold $\cE_{\min}(k)$ is a
{\it singular point} of the essential spectrum of
$\widehat{H}_{\mu}(k)$.

\item[(iii)] For any  $k\in
\cM_>(k_0;\cH_{n})$,  the operator $\widehat{H}_{1}(k)$ has at least
$n\geq1$ eigenvalues $z_1(k),...,z_n(k)$ (counting multiplicities)
below the threshold $\cE_{\min}(k)$.

\item[(iv)] For any $k\in
\cM_=(k_0;\cH_{n})$,  the threshold $\cE_{\min}(k)$ is a singular
point for the essential spectrum of $\widehat{H}_{1}(k)$ (with
multiplicity $n$).

\item[(v)] For any  $k\in
\cM_<(k_0;\ell^{2,e}(\Z^d))$, the operator $\widehat{H}_{1}(k)$ has
no eigenvalues below the threshold $\cE_{\min}(k)$.
\end{enumerate}
\end{theorem}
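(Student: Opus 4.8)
The plan is to reduce every item to a single comparison identity between the generalized Birman--Schwinger forms at $k$ and at $k_{0}$, after which each conclusion follows from Theorem \ref{General_BS} together with the min-max principle for compact operators. \emph{Step 1: the comparison identity.} For $\hat\psi\in\ell^{2,e}(\Z^{d})$ put $g:=\cF\circ|\widehat{V}|^{1/2}\hat\psi$, which is continuous on $\T^{d}$. Applying Parseval to the kernel \eqref{limkernel_function} I would first record that, for every $k\in\mathbb{G}$,
\begin{equation*}
\langle\hat\psi,\mathbb{B}_{1}(k,\cE_{\min}(k))\hat\psi\rangle=\int_{\T^{d}}\frac{|g(q)|^{2}}{\cE_{k}(q)-\cE_{\min}(k)}\,\eta(\mathrm{d}q).
\end{equation*}
Writing $\varepsilon(p)=\sum_{x}\hat\varepsilon(x)\cos(p,x)$ and using $\cE_{\min}(k)=\cE_{k}(0)$ for $k\in\mathbb{G}$ (Lemma \ref{region}), one has the elementary expansion $\cE_{k}(q)-\cE_{\min}(k)=2\sum_{x\ne0}\hat\varepsilon(x)\cos(\tfrac{k}{2},x)[\cos(q,x)-1]$, with the $x=0$ term cancelling. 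Subtracting the analogous identity at $k_{0}$, forming the difference of the two reciprocals, multiplying by $|g(q)|^{2}$, and interchanging the $x$-sum with the $q$-integral (legitimate by the absolute convergence of the series in \eqref{function}), I expect to arrive at
\begin{equation}\label{cmpid}
\langle\hat\psi,\mathbb{B}_{1}(k,\cE_{\min}(k))\hat\psi\rangle-\langle\hat\psi,\mathbb{B}_{1}(k_{0},\cE_{\min}(k_{0}))\hat\psi\rangle=\mathrm{L}(k,k_{0};\hat\psi),\qquad\hat\psi\in\ell^{2,e}(\Z^{d}),
\end{equation}
the $q$-integral surviving in front of $\hat\varepsilon(x)[\cos(\tfrac{k}{2},x)-\cos(\tfrac{k_{0}}{2},x)]$ being precisely $\cC_{x}(k,k_{0};\hat\psi)$ of \eqref{positive}.

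\emph{Step 2: translating the hypotheses.} By Theorem \ref{General_BS}(v), $\widehat{H}_{1}(k_{0})\ge\cE_{\min}(k_{0})I$ is equivalent to $\mathcal{N}_{+}(1,\mathbb{B}_{1}(k_{0},\cE_{\min}(k_{0})))=0$, i.e.\ $\mathbb{B}_{1}(k_{0},\cE_{\min}(k_{0}))\le I$, while singularity of multiplicity $n$ means (Theorem \ref{General_BS}(iv)) that $1$ is the largest eigenvalue of the compact operator $\mathbb{B}_{1}(k_{0},\cE_{\min}(k_{0}))$, attained exactly on the $n$-dimensional space $\cH_{n}$. Thus for unit $\hat\psi\in\cH_{n}$, \eqref{cmpid} reads $\langle\hat\psi,\mathbb{B}_{1}(k,\cE_{\min}(k))\hat\psi\rangle=1+\mathrm{L}(k,k_{0};\hat\psi)$, so the values of $\mathrm{L}(k,k_{0};\cdot)$ on the unit sphere of $\cH_{n}$ are exactly the eigenvalues of the Hermitian $n\times n$ compression $A_{k}:=P_{\cH_{n}}(\mathbb{B}_{1}(k,\cE_{\min}(k))-I)P_{\cH_{n}}|_{\cH_{n}}$; I will also use $\mathbb{B}_{\mu}(k,\cE_{\min}(k))=\mu\,\mathbb{B}_{1}(k,\cE_{\min}(k))$, which is visible from the explicit $\mu$-dependence in \eqref{def_B-Sch} and \eqref{limkernel_function}.

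\emph{Step 3: the five items.} Each is then one line. For (i), a unit $\hat\psi\in\cH_{n}$ with $\mathrm{L}(k,k_{0};\hat\psi)>0$ makes $\langle\hat\psi,\mathbb{B}_{1}(k,\cE_{\min}(k))\hat\psi\rangle>1$, so the top eigenvalue of $\mathbb{B}_{1}(k,\cE_{\min}(k))$ exceeds $1$, i.e.\ $\mathcal{N}_{+}(1,\mathbb{B}_{1}(k,\cE_{\min}(k)))\ge1$, and Theorem \ref{General_BS}(v) gives an eigenvalue of $\widehat{H}_{1}(k)$ below $\cE_{\min}(k)$. For (iii), on $\cM_{>}(k_{0};\cH_{n})$ continuity and compactness of the unit sphere of $\cH_{n}$ give $\delta:=\min_{\hat\psi\in\cH_{n},\,\|\hat\psi\|=1}\mathrm{L}(k,k_{0};\hat\psi)>0$, so $A_{k}\ge\delta I_{\cH_{n}}$; by Courant--Fischer, $\mathbb{B}_{1}(k,\cE_{\min}(k))$ then has at least $n$ eigenvalues $>1$, and Theorem \ref{General_BS}(v) yields $n$ eigenvalues of $\widehat{H}_{1}(k)$ below $\cE_{\min}(k)$. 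For (v), on $\cM_{<}(k_{0};\ell^{2,e}(\Z^{d}))$ one has $\mathrm{L}(k,k_{0};\hat\phi)<0$ for every unit $\hat\phi$, so by \eqref{cmpid} and $\mathbb{B}_{1}(k_{0},\cE_{\min}(k_{0}))\le I$, $\langle\hat\phi,\mathbb{B}_{1}(k,\cE_{\min}(k))\hat\phi\rangle<\|\hat\phi\|^{2}$ for all $\hat\phi\ne0$; hence $\mathcal{N}_{+}(1,\mathbb{B}_{1}(k,\cE_{\min}(k)))=0$ and Theorem \ref{General_BS}(v) gives no eigenvalue below $\cE_{\min}(k)$. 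For (ii) and (iv), on $\mathrm{M}_{=}(k_{0};\cH_{n})$ (resp.\ $\cM_{=}(k_{0};\cH_{n})$) some (resp.\ every) unit $\hat\psi\in\cH_{n}$ satisfies $\mathrm{L}(k,k_{0};\hat\psi)=0$, so $\langle\hat\psi,\mathbb{B}_{1}(k,\cE_{\min}(k))\hat\psi\rangle=\|\hat\psi\|^{2}$ and $\lambda_{\max}(\mathbb{B}_{1}(k,\cE_{\min}(k)))\ge1$; since the top eigenvalue of this compact operator is attained, taking $\mu\le1$ with $\mu\,\lambda_{\max}(\mathbb{B}_{1}(k,\cE_{\min}(k)))=1$ makes $1$ the top eigenvalue of $\mathbb{B}_{\mu}(k,\cE_{\min}(k))$, so by Theorem \ref{General_BS}(iv) $\cE_{\min}(k)$ is a singular point of $\widehat{H}_{\mu}(k)$; in the $\cM_{=}$ case $A_{k}=0$, so $\cH_{n}$ lies in that eigenspace and the multiplicity is at least $n$.

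The main obstacle will be Step 1, the only genuine computation: care is needed in expanding $\cE_{k}$ and $\cE_{k_{0}}$ about their common minimum $0$ (with the $x=0$ contributions cancelling), in identifying the surviving $q$-integral as $\cC_{x}(k,k_{0};\hat\psi)$, and in justifying the Fubini interchange from the absolute convergence recorded after \eqref{function}. A secondary delicate point is the borderline parts (ii) and (iv): identity \eqref{cmpid} pins the quadratic form at $\|\hat\psi\|^{2}$ on $\cH_{n}$ but not the full spectrum of $\mathbb{B}_{1}(k,\cE_{\min}(k))$, so one must be careful to state the singular-point conclusion for the correct coupling $\mu$, equivalently to argue that along $\mathrm{M}_{=}$ the top eigenvalue does not move strictly above $1$.
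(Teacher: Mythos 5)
Your plan is the one the paper uses: you reconstruct exactly the comparison identity \eqref{cmpid} (obtained in the paper inside the proof of (iii) by expanding $\cE_k(p)-\cE_{\min}(k)$ in Fourier series, subtracting the $k_0$-version, and identifying the surviving $q$-integrals with $\cC_s(k,k_0;\hat\psi)$), you translate the hypothesis $\widehat{H}_1(k_0)\ge\cE_{\min}(k_0)I$ and ``singular of multiplicity $n$'' into $\mathbb{B}_1(k_0,\cE_{\min}(k_0))\le I$ with $\cH_n=\ker(I-\mathbb{B}_1(k_0,\cE_{\min}(k_0)))$ via Theorem \ref{General_BS}, and you close (i), (iii), (v) exactly as in the paper via Courant--Fischer on the compression to $\cH_n$. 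In fact you spell out two points the paper leaves implicit: in (iii) the minimum of $\mathrm{L}(k,k_0;\cdot)$ over the \emph{compact} unit sphere of $\cH_n$ is strictly positive (merely having it $>0$ pointwise is not what Courant--Fischer asks for), and in (v) the strict form inequality pushes the attained top eigenvalue of the compact operator strictly below $1$.

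The one place where there is a genuine gap is the multiplicity claim in (iv), and it is worth noting that the paper's own proof is only the sentence ``(i), (ii), (iv) can be proven in the same way as (iii)'', which does not close this gap either. You correctly flag the issue --- identity \eqref{cmpid} pins the quadratic form on $\cH_n$ but not $\lambda_{\max}(\mathbb{B}_1(k,\cE_{\min}(k)))$ --- but your proposed fix does not quite work. The statement $A_k=0$ says $P_{\cH_n}(\mathbb{B}_1(k,\cE_{\min}(k))-I)P_{\cH_n}|_{\cH_n}=0$, i.e.\ the \emph{compression} vanishes; it does not give $\mathbb{B}_1(k,\cE_{\min}(k))\hat\psi=\hat\psi$ for $\hat\psi\in\cH_n$, because $\mathbb{B}_1(k,\cE_{\min}(k))\hat\psi$ may have a nonzero component in $\cH_n^\perp$. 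That implication requires the extra input $\mathbb{B}_1(k,\cE_{\min}(k))\le I$, in which case $T:=I-\mathbb{B}_1(k,\cE_{\min}(k))\ge0$ and $(T\hat\psi,\hat\psi)=0$ forces $T\hat\psi=0$; but $\mathbb{B}_1(k,\cE_{\min}(k))\le I$ is not guaranteed for $k\in\cM_=(k_0;\cH_n)$, which only controls the form on $\cH_n$. Your $\mu$-rescaling, $\mu=1/\lambda_{\max}(\mathbb{B}_1(k,\cE_{\min}(k)))\le 1$, does salvage the singular-point assertion in (ii) (the top eigenvalue of the compact $\mathbb{B}_\mu(k,\cE_{\min}(k))$ equals $1$ and is attained). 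However, it destroys the multiplicity argument for (iv): with $\mu<1$ the compression of $\mathbb{B}_\mu(k,\cE_{\min}(k))-I$ to $\cH_n$ becomes $(\mu-1)I_{\cH_n}\ne 0$, so $\cH_n$ is not in the eigenspace. To obtain (iv) as stated (with $\widehat{H}_1(k)$, i.e.\ $\mu=1$, and multiplicity $n$) one would need to know in addition that $\lambda_{\max}(\mathbb{B}_1(k,\cE_{\min}(k)))\le 1$ along $\cM_=(k_0;\cH_n)$, and neither your argument nor the paper supplies this.
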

\begin{corollary}\label{perturbation(mu)}
Let the assumptions of Theorem \ref {singular_point} are fulfilled.
Then:
\begin{enumerate}
\item[(i)] For any $\mu>1$ and $k\in \mathrm{M}_=(k_0;\cH_{n})$, the operator
$\widehat{H}_{\mu}(k)=\widehat{H}_0(k)+\mu \widehat{V}$ has at least
one eigenvalue below the threshold $\cE_{\min}(k)$.
\item[(ii)] For any $\mu>1$ and $k\in \cM_=(k_0;\cH_{n})\cup \cM_>(k_0;\cH_{n})$,  the operator
$\widehat{H}_{\mu}(k)=\widehat{H}_0(k)+\mu \widehat{V}$ has at least
$n$ eigenvalues (counting multiplicities) below the threshold
$\cE_{\min}(k)$.
\end{enumerate}
\end{corollary}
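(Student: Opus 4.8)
The plan is to exploit the exact homogeneity of the (generalized) Birman--Schwinger operator in the coupling constant, together with the generalized Birman--Schwinger identity of Theorem~\ref{General_BS}(v). First I would record the elementary but decisive observation that, by \eqref{def_B-Sch} for $z<\cE_{\min}(k)$ and by \eqref{limkernel_function} at the endpoint $z=\cE_{\min}(k)$, one has
\begin{equation*}
\mathbb{B}_{\mu}(k,\cE_{\min}(k))=\mu\,\mathbb{B}_{1}(k,\cE_{\min}(k)),\qquad k\in\mathbb{G},\ \mu>0 .
\end{equation*}
Hence $\spec(\mathbb{B}_{\mu}(k,\cE_{\min}(k)))=\mu\cdot\spec(\mathbb{B}_{1}(k,\cE_{\min}(k)))$ with the same multiplicities, and for every eigenvalue $\lambda$ of $\mathbb{B}_{1}(k,\cE_{\min}(k))$ the eigenspace of $\mathbb{B}_{\mu}(k,\cE_{\min}(k))$ for the eigenvalue $\mu\lambda$ coincides with the eigenspace of $\mathbb{B}_{1}(k,\cE_{\min}(k))$ for $\lambda$. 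No limiting procedure is required at $z=\cE_{\min}(k)$ because the generalized operator is defined directly through the kernel \eqref{limkernel_function}, which is manifestly linear in $\mu$.

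Next I would feed in the conclusions of Theorem~\ref{singular_point} about $\mathbb{B}_{1}(k,\cE_{\min}(k))$, read through Theorem~\ref{General_BS}(iv)--(v). For $k\in\mathrm{M}_{=}(k_{0};\cH_{n})$ the threshold $\cE_{\min}(k)$ is a singular point, so $1$ is an eigenvalue of $\mathbb{B}_{1}(k,\cE_{\min}(k))$ of some multiplicity $\ge1$; for $k\in\cM_{=}(k_{0};\cH_{n})$ it is an eigenvalue of $\mathbb{B}_{1}(k,\cE_{\min}(k))$ of multiplicity exactly $n$; and for $k\in\cM_{>}(k_{0};\cH_{n})$, Theorem~\ref{singular_point}(iii) together with Theorem~\ref{General_BS}(v) gives $\mathcal{N}_{+}(1,\mathbb{B}_{1}(k,\cE_{\min}(k)))\ge n$, i.e.\ $\mathbb{B}_{1}(k,\cE_{\min}(k))$ has eigenvalues $\lambda_{1},\dots,\lambda_{n}>1$ (counted with multiplicity).

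Now I would combine the two inputs, using $\mu>1$. In the setting of~(i): $\mu\cdot1=\mu$ is an eigenvalue of $\mathbb{B}_{\mu}(k,\cE_{\min}(k))$ and $\mu>1$, hence $\mathcal{N}_{+}(1,\mathbb{B}_{\mu}(k,\cE_{\min}(k)))\ge1$. In the setting of~(ii): if $k\in\cM_{=}(k_{0};\cH_{n})$ then $\mu$ is an eigenvalue of $\mathbb{B}_{\mu}(k,\cE_{\min}(k))$ of multiplicity $n$ with $\mu>1$; if $k\in\cM_{>}(k_{0};\cH_{n})$ then $\mu\lambda_{1},\dots,\mu\lambda_{n}$ are eigenvalues of $\mathbb{B}_{\mu}(k,\cE_{\min}(k))$ with $\mu\lambda_{j}>\lambda_{j}>1$; in both cases $\mathcal{N}_{+}(1,\mathbb{B}_{\mu}(k,\cE_{\min}(k)))\ge n$. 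Since $d\ge3$, Hypothesis~\ref{hypothesis} holds and every $k$ in question lies in $\mathbb{G}$, Theorem~\ref{General_BS}(v) applies and yields
\begin{equation*}
\mathcal{N}_{-}\bigl(\cE_{\min}(k),\widehat{H}_{\mu}(k)\bigr)=\mathcal{N}_{+}\bigl(1,\mathbb{B}_{\mu}(k,\cE_{\min}(k))\bigr)\ \ge\ 1\ \text{(resp.\ }\ge n\text{)} ,
\end{equation*}
which is exactly assertion~(i) (resp.~(ii)).

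There is no real obstacle here; the only point to get right --- and it is the conceptual content of the corollary rather than a technical difficulty --- is the bookkeeping at the value $1$. At $\mu=1$ the number $1$ is an eigenvalue of $\mathbb{B}_{1}(k,\cE_{\min}(k))$, which therefore does \emph{not} contribute to $\mathcal{N}_{+}(1,\cdot)$, so at $\mu=1$ the threshold is merely a singular point (a threshold resonance or a threshold eigenvalue of $\widehat{H}_{1}(k)$) rather than a bound state strictly below the essential spectrum; after rescaling by any $\mu>1$ this eigenvalue becomes $\mu>1$, which now does contribute, so the singular point is pushed into a genuine eigenvalue of $\widehat{H}_{\mu}(k)$ below $\cE_{\min}(k)$. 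The same computation also shows that $\mathcal{N}_{-}(\cE_{\min}(k),\widehat{H}_{\mu}(k))$ is a non-decreasing function of $\mu>0$.
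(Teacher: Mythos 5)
Your proof is correct. The paper gives no separate proof for this corollary, but the mechanism you use — the exact homogeneity $\mathbb{B}_{\mu}(k,\cE_{\min}(k))=\mu\,\mathbb{B}_{1}(k,\cE_{\min}(k))$ read off from the kernel \eqref{limkernel_function}, fed together with the multiplicity information of Theorem~\ref{singular_point} into the generalized Birman--Schwinger count of Theorem~\ref{General_BS}(v) — is exactly the intended derivation, and your bookkeeping of the eigenvalue $1$ at $\mu=1$ versus $\mu>1$ is the right point to make explicit.
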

\begin{remark}\label{Rem_regular_point}
Theorem \ref{singular_point} yields that for any $k_{=}\in
\mathrm{M}_=(k_0;\cH_{n})$, there is an open ball
$\mathrm{B}(k_{=})$ with the center $k_{=}\in
\mathrm{M}_=(k_0;\cH_{n})$ such that the sets $\mathrm{B}(k_{=})\cap
\mathrm{M}_{>}(k_0;\cH_{n})$ and $\mathrm{B}(k_{=})\cap
\mathrm{M}_{<}(k_0;\cH_{n})$ are non-empty. Thus, for any $k_{=}\in
\mathrm{M}_=(k_0;\cH_{n})$ there are open sets
$\mathfrak{N}_{>}\subset \mathrm{M}_{>}(k_0;\cH_{n})$ and
$\mathfrak{N}_{<}\subset \mathrm{M}_{>}(k_0;\cH_{n})$ such that for
any $k\in \mathfrak{N}_{>}$ the operator $\widehat{H}_{1}(k)$ has an
eigenvalue below $\cE_{\min}(k)$ and it has no eigenvalues for $k\in
\mathfrak{N}_{<}$.
\end{remark}

\begin{corollary}\label{greater}
Let the assumptions of Theorem \ref {singular_point} are fulfilled.
Then for each $k_{=} \in \mathrm{M}_{=}(k_0;\cH_{n})$, there exists
$k_{>}\in \mathrm{M}_{>}(k_0;\cH_{n})$ such that the number of
eigenvalues of \, $\widehat{H}_1(k_{>})$\, below $\cE_{\min}(k_{>})$
is greater than the number of eigenvalues (counting multiplicities)
of \ $\widehat{H}_1(k_{=})$\, below $\cE_{\min}(k_{=})$.
\end{corollary}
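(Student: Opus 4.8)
The plan is to transfer the statement, via the generalized Birman--Schwinger principle, to a perturbation question for the eigenvalue $1$ of the compact operator $\mathbb{B}_1(k,\cE_{\min}(k))$. By Theorem~\ref{General_BS}(v), $\mathcal{N}_-(\cE_{\min}(k),\widehat{H}_1(k))=\mathcal{N}_+(1,\mathbb{B}_1(k,\cE_{\min}(k)))$ for every $k\in\mathbb{G}$, so, setting $N:=\mathcal{N}_+(1,\mathbb{B}_1(k_=,\cE_{\min}(k_=)))$, it suffices to produce $k_>\in\mathrm{M}_>(k_0;\cH_n)$ with $\mathcal{N}_+(1,\mathbb{B}_1(k_>,\cE_{\min}(k_>)))\geq N+1$. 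The central computation, the one underlying Theorem~\ref{singular_point}, is that from the kernel formula \eqref{limkernel_function}, the representation $\cE_k(p)-\cE_{\min}(k)=-2\sum_{s\ne0}\cos(\tfrac{k}{2},s)\,[1-\cos(p,s)]\,\hat\varepsilon(s)$ (valid for $k\in\mathbb{G}$ by Lemma~\ref{region}), and the definitions \eqref{positive}--\eqref{function}, one gets
\begin{equation*}
\langle\mathbb{B}_1(k,\cE_{\min}(k))\hat\psi,\hat\psi\rangle=\langle\mathbb{B}_1(k_0,\cE_{\min}(k_0))\hat\psi,\hat\psi\rangle+\mathrm{L}(k,k_0;\hat\psi),\qquad\hat\psi\in\ell^{2,e}(\Z^d).
\end{equation*}
Since $\widehat{H}_1(k_0)\geq\cE_{\min}(k_0)I$ is part of the hypotheses of Theorem~\ref{singular_point}, the operator $\mathbb{B}_1(k_0,\cE_{\min}(k_0))$ has top eigenvalue $1$ with eigenspace $\cH_n$, separated from the rest of its spectrum by a gap $\delta_0>0$; so on $\cH_n$ the identity reads $\langle\mathbb{B}_1(k,\cE_{\min}(k))\hat\psi,\hat\psi\rangle=\|\hat\psi\|^2+\mathrm{L}(k,k_0;\hat\psi)$, whence by min--max every $k\in\mathrm{M}_>(k_0;\cH_n)$ satisfies $\lambda_{\max}(\mathbb{B}_1(k,\cE_{\min}(k)))>1$.

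Next I would localize near $k_0$. By Lemma~\ref{kernelBS}(iii) the map $k\mapsto\mathbb{B}_1(k,\cE_{\min}(k))$ is norm continuous (indeed $C^1$, as $\cE_k$ is $C^1$ in $k$), so on a small neighbourhood $\mathrm{G}(k_0)\subset\mathbb{G}$ of $k_0$ the operator $\mathbb{B}_1(k,\cE_{\min}(k))$ has exactly $n$ eigenvalues $\lambda_1(k)\geq\dots\geq\lambda_n(k)$ inside $(1-\delta_0/3,\infty)$ and all others below $1-\delta_0/3$; hence these $n$ branches carry all eigenvalues $\geq1$ and $\mathcal{N}_+(1,\mathbb{B}_1(k,\cE_{\min}(k)))=\#\{j:\lambda_j(k)>1\}$. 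They are the eigenvalues of the continuous $n\times n$ Hermitian matrix obtained by compressing $\mathbb{B}_1(k,\cE_{\min}(k))$ to the Riesz projection onto this top cluster, and first-order perturbation theory combined with the identity above yields $\lambda_j(k)=1+\ell_j(k)+O(|k-k_0|^2)$, where $\ell_1(k)\geq\dots\geq\ell_n(k)$ are the eigenvalues of the $n\times n$ matrix representing the quadratic form $\mathrm{L}(k,k_0;\cdot)\big|_{\cH_n}$.

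The remaining, and decisive, step is to lift the threshold eigenvalue. At $k=k_=$, Theorem~\ref{singular_point}(ii) gives that $\cE_{\min}(k_=)$ is a singular point, i.e.\ $1$ is an eigenvalue of $\mathbb{B}_1(k_=,\cE_{\min}(k_=))$ of some multiplicity $m\geq1$; hence $\lambda_{N+1}(k_=)=1$, while the branches $\lambda_1,\dots,\lambda_N$ (which exceed $1$ at $k_=$) stay $>1$ in a neighbourhood of $k_=$ by continuity. By Remark~\ref{Rem_regular_point}, $k_=$ lies in the closure of the open set $\mathrm{M}_>(k_0;\cH_n)$; because the entries of the reduced matrix depend real-analytically on $k$ (through the explicit formula \eqref{positive}--\eqref{function} for $\mathrm{L}$) and are not locally constant at $k_=$, the branch $\lambda_{N+1}(\cdot)$ cannot be $\leq1$ on a whole neighbourhood of $k_=$, so one may select $k_>\in\mathrm{M}_>(k_0;\cH_n)$ arbitrarily close to $k_=$ with, in addition, $\lambda_{N+1}(k_>)>1$. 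Then $\mathcal{N}_+(1,\mathbb{B}_1(k_>,\cE_{\min}(k_>)))\geq N+1$, and by Theorem~\ref{General_BS}(v) the number of eigenvalues of $\widehat{H}_1(k_>)$ below $\cE_{\min}(k_>)$ is strictly larger than that of $\widehat{H}_1(k_=)$ below $\cE_{\min}(k_=)$, with $k_>\in\mathrm{M}_>(k_0;\cH_n)$ — which is the corollary.

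I expect this last step to be the main obstacle: one must control the $O(|k-k_0|^2)$ remainder in the finite-dimensional reduction uniformly, and, harder, rule out that the $(N+1)$-st eigenvalue branch remains pinned at or below $1$ throughout a neighbourhood of $k_=$ along all directions lying in $\mathrm{M}_>(k_0;\cH_n)$ — this is where the explicit structure of $\mathrm{L}$ (its real-analytic, non-degenerate dependence on $k$) has to be exploited. In the commonly occurring case $n=1$ the difficulty evaporates: then necessarily $N=0$ (the single top-cluster eigenvalue equals $1$ at $k_=$), so any $k_>$ in the nonempty set $\mathrm{B}(k_=)\cap\mathrm{M}_>(k_0;\cH_1)$ furnished by Remark~\ref{Rem_regular_point} already satisfies $\mathcal{N}_+(1,\mathbb{B}_1(k_>,\cE_{\min}(k_>)))\geq1>0$ by Theorem~\ref{singular_point}(i).
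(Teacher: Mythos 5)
The overall strategy---reduce to $\mathcal{N}_{+}(1,\mathbb{B}_1(k,\cE_{\min}(k)))$ via Theorem~\ref{General_BS}(v) and then use the exact identity
$\langle\mathbb{B}_1(k,\cE_{\min}(k))\hat\psi,\hat\psi\rangle=\langle\mathbb{B}_1(k_0,\cE_{\min}(k_0))\hat\psi,\hat\psi\rangle+\mathrm{L}(k,k_0;\hat\psi)$
from the proof of Theorem~\ref{singular_point}---is the right starting point, and you correctly recognize that $\widehat{H}_{1}(k_0)\ge\cE_{\min}(k_0)I$ forces $\mathbb{B}_1(k_0,\cE_{\min}(k_0))\le I$ with top eigenspace $\cH_n$ separated by a gap. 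However, there are two genuine gaps.

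First, and most serious: your entire finite-dimensional reduction (the top cluster of exactly $n$ eigenvalues in $(1-\delta_0/3,\infty)$, the compression to the Riesz projection, the expansion $\lambda_j(k)=1+\ell_j(k)+O(|k-k_0|^2)$, and the conclusion $\lambda_{N+1}(k_=)=1$) is valid only for $k$ in a \emph{small} neighbourhood $\mathrm{G}(k_0)$ of $k_0$. But the corollary is asserted for \emph{every} $k_=\in\mathrm{M}_=(k_0;\cH_n)$, and nothing in Hypothesis~\ref{hypothesis} or in the definition of $\mathrm{M}_=(k_0;\cH_n)$ confines that set to a neighbourhood of $k_0$; for $k_=$ far from $k_0$ the spectrum of $\mathbb{B}_1(k_=,\cE_{\min}(k_=))$ need not exhibit the top cluster you use, $N$ need not be bounded by $n$, and the enumeration "$\lambda_{N+1}(k_=)=1$" has no meaning. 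The same issue invalidates the claimed simplification in the $n=1$ case: the inference "$n=1\Rightarrow N=0$" again presupposes $k_=\in\mathrm{G}(k_0)$. The natural fix is to re-base the perturbation argument at $k_=$: write $\langle\mathbb{B}_1(k,\cE_{\min}(k))\hat\psi,\hat\psi\rangle=\langle\mathbb{B}_1(k_=,\cE_{\min}(k_=))\hat\psi,\hat\psi\rangle+\mathrm{L}(k,k_=;\hat\psi)$, take $\hat\psi$ in the eigenspace $\cH'$ of $\mathbb{B}_1(k_=,\cE_{\min}(k_=))$ at $1$ (nonempty by Theorem~\ref{singular_point}(ii)), and perturb into the set where $\mathrm{L}(k,k_=;\hat\psi)>0$; by continuity the $N$ eigenvalues already $>1$ persist, and one gains at least one more. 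But then one must also show that the $k$ so obtained can be taken in $\mathrm{M}_>(k_0;\cH_n)$, which is a further compatibility condition your argument does not address.

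Second, even granting $k_=\in\mathrm{G}(k_0)$, the decisive final step is unsupported. You assert that "the entries of the reduced matrix depend real-analytically on $k$" and "are not locally constant at $k_=$", hence $\lambda_{N+1}$ cannot stay $\le1$ throughout a neighbourhood of $k_=$. Neither real-analyticity nor non-degeneracy of $\mathrm{L}(\cdot,k_0;\hat\psi)$ is established in the paper (Lemma~\ref{kernelBS} only gives continuity of $\cC_s$ and of $k\mapsto\mathbb{B}_1(k,\cE_{\min}(k))$), and even with analyticity, a non-constant real-analytic function can have a local maximum, so the conclusion does not follow; ruling this out requires using the positivity structure $\cC_s(k,k_0;\hat\psi)>0$ and the sign of $\cos(\tfrac{k}{2},s)-\cos(\tfrac{k_0}{2},s)$ directly, which you do not do. In short, the proposal captures the correct Birman--Schwinger reduction and the key quadratic-form identity, but the localization at $k_0$ is applied outside its domain of validity, and the crucial "lift the threshold eigenvalue" step is asserted rather than proved.
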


\begin{remark} \label{eigenvalue}
Let the operator  $\widehat{H}_{1}(k_0), k_0\in \mathbb{G}$ has
$m\ge1$ eigenvalues (counting multiplicities) lying below the
threshold $\cE_{\min }(k_0).$ Then by the Theorem \ref{General_BS},
the operator $\mathbb{B}_1(k_0,\cE_{\min }(k_0))$ has $m$
eigenvalues (counting multiplicities) $\lambda_1\ge...\ge\lambda_m
>1$. Let $\cH^{eig}_m$ be an $m$- dimensional subspace, spanned to
the eigenfunctions of the operator $\mathbb{B}_1(k_0,\cE_{\min
}(k_0))$,\,$k_0\in \mathbb{G}$ associated to
$\lambda_1,...,\lambda_m$. Then for any non-zero $\psi \in
\cH^{eig}_m$, the inequality $\cC_s(k,k_0;\psi)>0,\,s\in \Z^d$
holds, where $\cC_s(k,k_0;\psi)$ is defined in \eqref{positive}.
\end{remark}

For the $m$- dimensional subspace $\cH^{eig}_m$, associated to the
eigenvalues lying below the threshold $\cE_{\min }(k_0)$ of
$\widehat{H}_{1}(k_0)$, we define the subset
$\cM_>(k_0;\cH^{eig}_m)\subset\mathbb{G}$ as
\begin{equation*}
\cM_>(k_0;\cH^{eig}_m)=\bigcap_{\hat{\psi}\in
\cH^{eig}_m,||\hat{\psi}||=1} M_>(k_0;\hat{\psi}).
\end{equation*}

Theorems \ref{Inequalities} and \ref{singular_point} yield that the
total number of both the multiplicity of singular point
$\cE_{\min}(k)$ and the eigenvalues (counting multiplicities) below
the threshold $\cE_{\min}(k),k \in \mathbb G$ of\\
$\widehat{H}_{\mu}(k)=\widehat{H}_0(k)+\mu \widehat{V}$ is a
nondecreasing function in $k \in \mathbb G$  and $\mu>0$ in the
sense of the following Corollary.
\begin{corollary}\label{total_number}
Let $d\geq3.$ Assume Hypothesis \ref{hypothesis} and that
$\cE_{\min}(k_0), k_0 \in \mathbb G$  is a singular point of
multiplicity $n$ and the operator $\widehat{H}_\mu(k_0)$ has only
$m$ eigenvalues (counting multiplicities) below the threshold
$\cE_{\min}(k_0)$.
\begin{enumerate}
\item[(i)]If $k\in \cM_=(k_0;\cH_{n})\bigcap \cM_>(k_0;\cH_{m}^{eig})$, then for  all
$k\in\cM_>(k_0;\cH_{n})$  the operator $\widehat{H}_\mu(k)$ has at
least $n+m$ eigenvalues (counting multiplicities) lying below
$\cE_{\min}(k).$

\item[(ii)] For all $\mu>1$ and $k\in
\cM_=(k_0;\cH_{n})\bigcap \cM_>(k_0;\cH_{m}^{eig})$, the operator
$\widehat{H}_\mu(k)$ has at least $n+m$ eigenvalues (counting
multiplicities) lying below $\cE_{\min}(k)$.
\end{enumerate}
\end{corollary}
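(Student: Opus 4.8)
Indent level: the plan is to translate the assertion, via the generalized Birman--Schwinger principle, into a statement about the number of eigenvalues exceeding $1$ of the compact operator $\mathbb{B}_\mu(k,\cE_{\min}(k))$, and then to compare $\mathbb{B}_\mu(k,\cE_{\min}(k))$ with $\mathbb{B}_\mu(k_0,\cE_{\min}(k_0))$ through the quadratic--form identity that already underlies Theorem~\ref{singular_point}. Indeed, by Theorem~\ref{General_BS}(v) we have $\mathcal{N}_-(\cE_{\min}(k),\widehat{H}_\mu(k))=\mathcal{N}_+(1,\mathbb{B}_\mu(k,\cE_{\min}(k)))$, and since $\widehat{R}_0(k,z)$ does not depend on $\mu$ one has $\mathbb{B}_\mu(k,\cdot)=\mu\,\mathbb{B}_1(k,\cdot)$, so the whole matter reduces to locating the eigenvalues of $\mathbb{B}_1(k,\cE_{\min}(k))$.

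At $k_0$ the spectral picture is as follows. Since $\cE_{\min}(k_0)$ is a singular point of multiplicity $n$, Theorem~\ref{General_BS}(iv) gives that $1$ is an eigenvalue of $\mathbb{B}_1(k_0,\cE_{\min}(k_0))$ with $n$-dimensional eigenspace $\cH_n$; since $\widehat{H}_1(k_0)$ has exactly $m$ eigenvalues below $\cE_{\min}(k_0)$, Theorem~\ref{General_BS}(v) gives $m$ eigenvalues $\lambda_1\ge\cdots\ge\lambda_m>1$ whose eigenvectors span $\cH^{eig}_m$. As $1\ne\lambda_j$, self-adjointness forces $\cH_n\perp\cH^{eig}_m$, so $\cH:=\cH_n\oplus\cH^{eig}_m$ is $(n+m)$-dimensional and $\langle\hat\psi,\mathbb{B}_1(k_0,\cE_{\min}(k_0))\hat\psi\rangle=\|\hat\psi_n\|^2+\langle\hat\psi_m,\mathbb{B}_1(k_0,\cE_{\min}(k_0))\hat\psi_m\rangle\ge\|\hat\psi_n\|^2+\lambda_m\|\hat\psi_m\|^2\ge\|\hat\psi\|^2$ for every $\hat\psi=\hat\psi_n+\hat\psi_m\in\cH$; in particular $\mathbb{B}_1(k_0,\cE_{\min}(k_0))\ge I$ on $\cH$, with a fixed gap $\lambda_m-1>0$ above $1$ along $\cH^{eig}_m$. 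Next I would reprove, exactly as inside the proof of Theorem~\ref{singular_point}, the identity
\[
\langle\hat\psi,\mathbb{B}_1(k,\cE_{\min}(k))\hat\psi\rangle=\langle\hat\psi,\mathbb{B}_1(k_0,\cE_{\min}(k_0))\hat\psi\rangle+\mathrm{L}(k,k_0;\hat\psi),\qquad k\in\mathbb{G},\ \hat\psi\in\ell^{2,e}(\Z^d),
\]
which follows from Parseval, the elementary expansion $\cE_k(p)-\cE_{\min}(k)=2\sum_{s\in\Z^d}\hat\varepsilon(s)\cos(\tfrac{k}{2},s)[\cos(p,s)-1]$ (legitimate for $k\in\mathbb{G}$ by Lemma~\ref{region}), and the definitions \eqref{positive}--\eqref{function}, with $s$-summation and $p$-integration interchangeable thanks to $\hat\varepsilon\in\ell^1(\Z^d)$ and the uniform bound on $\cC_s(k,k_0;\hat\psi)$.

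For part (ii) I would take $k\in\cM_=(k_0;\cH_n)\cap\cM_>(k_0;\cH^{eig}_m)$. By Theorem~\ref{singular_point}(iv) the number $1$ is an eigenvalue of $\mathbb{B}_1(k,\cE_{\min}(k))$ of multiplicity $n$; on the other hand, feeding the identity into the min--max principle with the $m$-dimensional test space $\cH^{eig}_m$ and using $\mathrm{L}(k,k_0;\cdot)>0$ on $\cH^{eig}_m\setminus\{0\}$ shows that the $m$-th eigenvalue of $\mathbb{B}_1(k,\cE_{\min}(k))$ is $\ge\lambda_m>1$. Since eigenvalues equal to $1$ and strictly larger than $1$ are distinct, $\mathbb{B}_1(k,\cE_{\min}(k))$ has at least $n+m$ eigenvalues $\ge1$, hence $\mathbb{B}_\mu(k,\cE_{\min}(k))=\mu\,\mathbb{B}_1(k,\cE_{\min}(k))$ has at least $n+m$ eigenvalues $\ge\mu>1$ for every $\mu>1$, and Theorem~\ref{General_BS}(v) yields (ii). For part (i) the coupling is not increased, so the $n$ ``resonant'' directions must themselves be pushed strictly above $1$: this is why $k$ is chosen in $\cM_>(k_0;\cH_n)$, where moreover the strict inequality $\mathrm{L}(k,k_0;\cdot)>0$ on $\cH^{eig}_m$ survives by continuity near any point of $\cM_=(k_0;\cH_n)\cap\cM_>(k_0;\cH^{eig}_m)$. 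Writing $\mathbb{B}_1(k,\cE_{\min}(k))-I=(\mathbb{B}_1(k_0,\cE_{\min}(k_0))-I)+\mathrm{L}(k,k_0;\cdot)$ and restricting to $\cH$, the first summand is $\ge0$ with kernel exactly $\cH_n$, while $\mathrm{L}(k,k_0;\cdot)>0$ on $\cH_n\setminus\{0\}$; provided in addition $\mathrm{L}(k,k_0;\cdot)\ge0$ on $\cH$, the two non-negative forms have trivially intersecting kernels, so their sum is strictly positive on $\cH$, i.e.\ $\mathbb{B}_1(k,\cE_{\min}(k))>I$ on $\cH$, which forces at least $n+m$ eigenvalues of $\mathbb{B}_\mu(k,\cE_{\min}(k))$ strictly above $1$; Theorem~\ref{General_BS}(v) then gives (i).

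The step I expect to be the real obstacle is controlling the form $\hat\psi\mapsto\mathrm{L}(k,k_0;\hat\psi)=\langle\hat\psi,(\mathbb{B}_1(k,\cE_{\min}(k))-\mathbb{B}_1(k_0,\cE_{\min}(k_0)))\hat\psi\rangle$: as a difference of non-negative operators it is in general sign-indefinite, so positivity on $\cH_n$ and on $\cH^{eig}_m$ separately does not by itself control the cross terms on $\cH$. I would dispose of this by noting that each $\cC_s(k,k_0;\cdot)$ is itself a non-negative Hermitian form, so that $\mathrm{L}(k,k_0;\cdot)=2\sum_{s\ne0}\hat\varepsilon(s)[\cos(\tfrac{k}{2},s)-\cos(\tfrac{k_0}{2},s)]\,\cC_s(k,k_0;\cdot)$ is non-negative on all of $\ell^{2,e}(\Z^d)$ whenever the scalar coefficients $\hat\varepsilon(s)[\cos(\tfrac{k}{2},s)-\cos(\tfrac{k_0}{2},s)]$ are uniformly of one sign, a condition carving out, inside $\mathbb{G}$, a one-sided neighbourhood of $k_0$; off that region one instead keeps $k$ so close to $k_0$ that $\mathrm{L}(k,k_0;\cdot)$, which is $O(|k-k_0|)$, cannot destroy the fixed gap $\lambda_m-1$ along $\cH^{eig}_m$. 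Finally, compactness of the unit spheres of the finite-dimensional spaces $\cH_n$, $\cH^{eig}_m$ and $\cH$ upgrades all the strict inequalities to uniform ones, which is precisely what the min--max step consumes.
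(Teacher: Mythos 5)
Your argument for part (ii) is correct and, as far as one can tell, is the argument the authors intend (the paper provides no proof of this corollary beyond the remark that it ``follows from Theorems~\ref{Inequalities} and \ref{singular_point}''). The chain you give is sound: Theorem~\ref{singular_point}(iv) puts an eigenvalue $1$ of $\mathbb{B}_1(k,\cE_{\min}(k))$ with multiplicity $n$; the min--max test on the $m$-dimensional space $\cH^{eig}_m$, combined with $\mathrm{L}(k,k_0;\cdot)>0$ on its unit sphere, produces $m$ eigenvalues strictly above $\lambda_m>1$; self-adjointness of $\mathbb{B}_1(k,\cE_{\min}(k))$ makes the eigenspace at $1$ orthogonal to the eigenvectors above $1$, giving $n+m$ eigenvalues $\geq 1$; and multiplication by $\mu>1$ pushes all of them strictly above $1$, after which Theorem~\ref{General_BS}(v) concludes.

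For part (i), however, you have put your finger on a genuine gap, and your proposed patch does not close it within the corollary's hypotheses. Membership of $k$ in $\cM_>(k_0;\cH_n)\cap\cM_>(k_0;\cH^{eig}_m)$ only yields $\mathrm{L}(k,k_0;\hat\psi)>0$ for unit vectors $\hat\psi$ lying in $\cH_n$ or in $\cH^{eig}_m$ \emph{separately}. The min--max step you want to run needs
\[
\langle\hat\psi,\mathbb{B}_1(k,\cE_{\min}(k))\hat\psi\rangle>\|\hat\psi\|^2\quad\text{for all nonzero }\hat\psi\in\cH:=\cH_n\oplus\cH^{eig}_m,
\]
and a Hermitian form that is positive on two orthogonal subspaces can still fail to be positive on their sum because of off-diagonal coupling (your own $2\times 2$ picture makes this vivid). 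The extra assumption you insert --- $\mathrm{L}(k,k_0;\cdot)\geq 0$ on all of $\cH$, or more globally the sign condition $\hat\varepsilon(s)\bigl[\cos(\tfrac{k}{2},s)-\cos(\tfrac{k_0}{2},s)\bigr]\geq 0$ for all $s$, or smallness of $|k-k_0|$ to protect the fixed gap $\lambda_m-1$ --- is exactly what the statement does not supply. One would either have to reformulate the hypothesis in terms of $\cM_>(k_0;\cH)$ for the combined $(n+m)$-dimensional space $\cH$, or argue by continuity of $k\mapsto\mathbb{B}_1(k,\cE_{\min}(k))$ (Lemma~\ref{kernelBS}(iii)) that near a point $k_1\in\cM_=(k_0;\cH_n)\cap\cM_>(k_0;\cH^{eig}_m)$, where part (ii) already supplies $n+m$ eigenvalues $\geq1$, perturbing into $\cM_>(k_0;\cH_n)$ pushes the $n$-fold degenerate eigenvalue $1$ strictly above $1$ while the $m$ eigenvalues stay above $1$ by the gap. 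That continuity argument would reconcile the odd double quantification of $k$ in the corollary's wording, but it is not what your proposal does, and the paper itself offers no such argument.
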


\section{The proof of the results}

{\bf The proof of Lemma \ref{kernelBS}} (i) Since
$\varepsilon(\cdot)$ satisfies  Hypothesis \ref{hypothesis} for any
$k\in\mathbb{G}$ the point $0\in \T^d,\,d\ge3$ is a non-degenerate
minimum of the function $\cE_{k}(\cdot)$.

According to the parametrical Morse lemma (see, e.g.,
\cite{Fedoryuk1987}, p. 113) on smooth functions, one concludes that
for any $k\in \mathbb{G}$ there exists $C^{(1)}(W_{\gamma}(0))$--
diffeomorphisms $\phi(k,\cdot): W_{\gamma}(0)\rightarrow \cU(0)$ of
the ball $W_{\gamma}(0)\subset \mathbb{R}^d$ to a neighborhood
$\cU(0)\subset \T^d $ of the point $0\in \T^d$, so that the function
$\cE_k(\phi(k,q))$ can be represented as
\begin{align*}
&\cE_k(\phi(k,q))= \cE_{\min}(k) + q^2=
\cE_{\min}(k)+q^2_1+...+q^2_d,\,\cE_{\min}(k)\in
C^{(3)}(\mathbb{G}).
\end{align*}
The Jacobian $J(k,\phi(k,q))$ of the mapping $\phi$ is continuous in
$(k,q)\in \mathbb{G}\times W_{\gamma}(0)$ and $J(k,0)>0$ for any
$k\in \mathbb{G}.$

Therefore, for any fixed $k\in \mathbb{G}$ and $x \in \Z^d$, the
kernel function (of resolvent)
$\widehat{\mathcal{R}}_0(k,{\mathcal{E}}_{\min }(k);x)$ can be
written as the sum $\widehat{\mathcal{R}}_0(k,{\mathcal{E}}_{\min
}(k);x)=\widehat{\mathcal{R}}^{(1)}_0(k,{\mathcal{E}}_{\min
}(k);x)+\widehat{\mathcal{R}}^{(2)}_0(k,{\mathcal{E}}_{\min
}(k);x)$, where
\begin{align*}
&\widehat{\mathcal{R}}^{(1)}_0(k,{\mathcal{E}}_{\min }(k);x)
=\int\limits_{\cU(0)}\frac{e^{i(p,x)}\eta(\mathrm{d}p)}{\cE_k(p)-\cE_{\min}(k)},\\\,\,
&\widehat{\mathcal{R}}^{(2)}_0(k,{\mathcal{E}}_{\min
}(k);x)=\int\limits_{\mathbb{T}^d\setminus
\cU(0)}\frac{e^{i(p,x)}\eta(\mathrm{d}p)} {\cE_k(p)-\cE_{\min}(k)}.
\end{align*}
By making a change of variables $p=\phi(k,q)$, we obtain
\begin{align*}
&\widehat{\mathcal{R}}_0^{(1)}(k,{\mathcal{E}}_{\min }(k);x)=
\int\limits_{W_{\gamma}
(0)}\frac{e^{i(\phi(k,q),x)}}{q^2_1+...+q^2_d}J(k,\phi(k,q))\eta(\mathrm{d}q).
\end{align*}
In the spheroidal coordinates, by denoting $q=r\omega$, it can be
rewritten as
\begin{gather}
\widehat{\mathcal{R}}_0^{(1)}(k,{\mathcal{E}}_{\min }(k);x)= \int
\limits_{0}^{\gamma} r^{d-3}dr
\int\limits_{\Omega_{d-1}}e^{i(\phi(k,r\omega),x)}
J(k,\phi(k,r\omega)) d\omega,
\end{gather}
where $S^{d-1}$ is the unit sphere in $\R^d$ and $d\omega$ is its
element.

Observe that the function
\begin{equation*}
\widehat{\mathcal{R}}^{(2)}_0(k,{\mathcal{E}}_{\min
}(k);x)=\int\limits_{\mathbb{T}^d\setminus
\cU(0)}\frac{e^{i(p,x)}\eta(\mathrm{d}p)} {\cE_k(p)-\cE_{\min}(k)}
\end{equation*}
is continuously differentiable in $k\in \mathbb{G}$. Since
$J(k,\phi(k,q))$ is continuous in $(k,q)\in \mathbb{G}\times
W_{\gamma}(0)$ the functions
$\widehat{\mathcal{R}}_0^{(1)}(k,{\mathcal{E}}_{\min }(k);x)$ and
$\widehat{\mathcal{R}}_{0}(k,{\mathcal{E}}_{\min
}(k);x)=\widehat{\mathcal{R}}_0^{(1)}(k,{\mathcal{E}}_{\min
}(k);x)+\widehat{\mathcal{R}}_0^{(2)}(k,{\mathcal{E}}_{\min }(k);x)$
are continuous in $k\in \mathbb{G}$.

(ii) For any fixed  $x,y \in \Z^d$, the continuity in $k\in
\mathbb{G}$ of the kernel function $\cB_{\mu}(k,\cE_{\min}(k); x,y)$
defined in \eqref{limkernel_function} can be proven by the same way
as (i) of Lemma \ref{kernelBS}.

(iii) For any $k\in \mathbb{G}$, the Hilbert-Schmidt norm
$||\mathbb{B}_{\mu}(k,\cE_{\min}(k))||_{2}$ of the operator
$\mathbb{B}_{\mu}(k,\cE_{\min}(k))$  can be estimated as
\begin{align*}
&||\mathbb{B}_{\mu}(k,\cE_{\min}(k))||^{2}_{2}=\sum\limits_{x,y\in\Z^d}|\cB_{\mu}(k,\cE_{\min}(k);x,y)|^2\\
& \le \mu^2 \sum\limits_{x,y\in\Z^d}
|\hat{v}(x)|\left|\int\limits_{\T^d}
\frac{e^{i(p,x-y)}\eta(\mathrm{d}p)}{\cE_k(p)-\cE_{\min}(k)}\right|^2|\hat{v}(y)|\\
&\le\mu^2\left[\int\limits_{\T^d}\frac{\eta(\mathrm{d}p)}{\cE_k(p)-\cE_{\min}(k)}\right]^2||\widehat{V}||_{\ell
^{1}(\mathbb{Z}^{d})}^2,
\end{align*}
i.e., the operator $\mathbb{B}_{\mu}(k,\cE_{\min}(k))$ belongs to
$\Sigma_{2}$. Since the operator norm
$||\mathbb{B}_{\mu}(k,\cE_{\min}(k))||$ satisfies the inequality
\begin{equation*}
||\mathbb{B}_{\mu}(k,\cE_{\min}(k))||^2\leq
||\mathbb{B}_{\mu}(k,\cE_{\min}(k))||^2_{2}
\end{equation*}
the operator $\mathbb{B}_{\mu}(k,\cE_{\min}(k))$ is continuous in $k
\in \mathbb{G}$.$\square$

{\bf The proof of Theorem \ref{General_BS}}. We only prove the items
(i), (ii) and (iii), since the case (iv) can be proven as in Lemma
2.2 of ref. \cite {BdsPL2018} and the case (v) similarly to Theorem
6 of ref. \cite{LakaevAlladust2014}.

Let $d\geq3$ and $\hat{v} \in \ell^1(\Z^d; \R_0^-)$.

(i) Note that $\hat{f} \in \ell_{0}(\Z^d)$ and $\hat{v} \in
\ell^1(\Z^d; \R_0^-)$ yield
$\hat{\psi}=|\widehat{V}|^{\frac{1}{2}}\hat{f} \in
\ell^{2,e}(\Z^d)$. Now let $\hat{f} \in\ell_{0}(\Z^d)$ be a solution
of $(\widehat{H}_{\mu}(k)-\cE_{\min}(k){I})\hat{f}=0$, i.e., the
equality
\begin{equation}\label{a1}
(\widehat{H}_0(k)-\cE_{\min}(k)I)\hat{f}=-\mu \widehat{V}\hat{f}
\end{equation} holds.
Then the conditions for $\hat{v}$ of Hypothesis \ref{hypothesis}
yields that $\widehat{V} \hat{f}\in\ell^1(\Z^d)$. Using the
definition of $\widehat{R}_0(k,\cE_{\min}(k))$  and the properties
of the Fourier transform, it can be shown that
\begin{align*}
[\widehat{R}_0(k,\cE_{\min}(k))][\widehat{H}_0(k)-\cE_{\min}(k)I]=I,
\end{align*}
i.e., the operator $ \widehat{R}_0(k,\cE_{\min}(k))$ is the
resolvent of $ \widehat{H}_{0}(k)$ at the threshold
$z=\cE_{\min}(k)$.

The equality \eqref{a1} yields the relation
\begin{align*}
\hat{f}=\widehat{R}_0(k,\cE_{\min}(k))\left(
\widehat{H}_{0}(k)-\cE_{\min}(k) I\right)\hat{f}= \mu
\widehat{R}_0(k,\cE_{\min}(k))|\widehat{V}|\hat{f}
\end{align*}
and
\begin{align*}
|\widehat{V}|^{\frac{1}{2}}\hat{f}=
\mu[|\widehat{V}|^{\frac{1}{2}}\widehat{R}_0(k,\cE_{\min}(k))|\widehat{V}|^{\frac{1}{2}}]
|\widehat{V}|^{\frac{1}{2}}\hat{f},
\end{align*}
i.e., $\hat{\psi}=|\widehat{V}|^{\frac{1}{2}}\hat{f}$ is a solution
of ${\mathbb{B}}_{\mu}(k,\cE_{\min}(k))\hat{\psi}=\hat{\psi}.$

(ii) Let $d=3 \,,\,4.$ Since  $\hat{v} \in \ell^1(\Z^d; \R_0^-)$ is
a non-positive function, for any $\hat{\psi}\in\ell^{2,e}(\Z^d)$,
the Cauchy--Schwarz inequality leads to the relation
$|\widehat{V}|^{\frac{1}{2}}\hat{\psi}\in\ell^{1}(\Z^d)$.

Since the function $\dfrac{1}{\cE_k(\cdot)-\cE_{\min}(k)}$ is
integrable on $\mathbb{T}^d$, the Riemann--Lebesgue lemma yields
\begin{equation}\label{q2}
\widehat{\mathcal
R}_0(k,\cE_{\min}(k);x):=\int\limits_{\T^d}\dfrac{\mathrm e^{\mathrm
i (p,x)} \eta(\mathrm dp)}{\cE_k(p)-\cE_{\min}(k)}\rightarrow 0
\,\,\,\,\mbox{as}\,\,\,\,|x|\rightarrow \infty.
\end{equation}
The inclusion $ |\widehat{V}|^{\frac{1}{2}}\hat{\psi}\in
\ell^{1}(\Z^d)$ and  \eqref{q2}  lead to
\begin{equation}
\hat{f}(x)=\sum_{y\in\Z^d} \widehat{\mathcal
R}_0(k,\cE_{\min}(k);y-x)
|\hat{v}(y)|^{\frac{1}{2}}\hat{\psi}(y)\rightarrow 0
\,\,\,\mbox{as}\,\,\, |x|\rightarrow \infty,
\end{equation} i.e., $\hat{f}\in\ell_{0}(\Z^d)$.

Now let $\hat{\psi} \in \ell^{2,e}(\Z^d)$ be a solution of the
equation
\begin{align}\label{solution}
\mu|\widehat{V}|^{\frac{1}{2}}\widehat{R}_0(k,\cE_{\min}(k))|\widehat{V}|^{\frac{1}{2}}\hat{\psi}=\hat{\psi}.
\end{align}
By denoting
\begin{equation}\label{q4}
\hat{f} = \widehat{R}_0(k,\cE_{\min}(k))
 |\widehat{V}|^{\frac{1}{2}}\hat{\psi}
\end{equation}
we have that $\hat{f}\in \ell_{0}(\Z^d)$. Equation \eqref{solution}
yields that
\begin{equation}\label{q3}
\mu|\widehat{V}|^{\frac{1}{2}}\hat{f}=\hat{\psi} \,\,\mbox{and}\,\,
\mu|\widehat{V}|\hat{f}= |\widehat{V}|^{\frac{1}{2}}\hat{\psi}.
\end{equation}
The equality \eqref{q4} and the second equality in  \eqref{q3} imply
that
\begin{equation}\label{q5}
\left(\widehat{H}_{0}(k)-\cE_{\min}(k) I\right)\hat{f} =-\mu\left(
\widehat{H}_{0}(k)-\cE_{\min}(k) I\right)
\widehat{R}_0(k,\cE_{\min}(k))\widehat{V}\hat{f}=-\mu
\widehat{V}\hat{f},
\end{equation}
i.e., \,$\hat{f}\in\ell_{0}(\Z^d)$ is a solution of the equation
$\widehat{H}_{\mu}(k)\hat{f}=\cE_{\min}(k)\hat{f}$.

(iii) Let $d\geq 5$. According to Lemma \ref{region} the inclusion
\begin{equation*}
\dfrac{1}{\cE_k(\cdot)-\cE_{\min}(k)}\in
L^{2,e}(\T^d,\eta)
\end{equation*}
holds and hence the Plancherel
theorem leads to
\begin{equation}\label{q22}
\widehat{\mathcal
R}_0(k,\cE_{\min}(k);x):=\int\limits_{\T^d}\dfrac{\mathrm e^{\mathrm
i (p,x)} \eta(\mathrm
dp)}{\cE_k(p)-\cE_{\min}(k)}\in\ell^{2,e}(\Z^d).
\end{equation}
Since for any $\hat{\psi}\in\ell^{2,e}(\Z^d)$ the relation
$|\widehat{V}|^{\frac{1}{2}}\hat{\psi}\in \ell^{1}(\Z^d)$ holds, the
Young convolution inequality \cite[IX.4]{RSII1979} and the relation
\eqref{q22} yield that
\begin{equation*}
\hat{f}=\widehat{R}_{0}(k,\cE_{\min}(k))
|\widehat{V}|^{\frac{1}{2}}\hat{\psi}\in \ell^{2,e}(\Z^d).
\end{equation*}
 The equality $\widehat{H}_{\mu}(k)\hat{f}=\cE_{\min}(k)\hat{f},\,\,
\hat{f}\in\ell^{2,e}(\Z^d)$ can be proven as the case above.
$\square$

{\bf Proof of Theorem \ref{existence}} Let $d=1,\, 2$ and
$\hat{v}\le0$ is not identically zero. Then the number
$\lambda={v}(s),\,s\in \Z^d$ is an eigenvalue of the operator
$\widehat{V}$ and the kronecker delta function  $
\hat{\psi}_{s}(x)=\delta_{sx}\in \ell^{2,e}(\Z^d)$ of the point
$s\in \Z^d$ is the associated eigenfunction.  Then for any
$z<\cE_{\min}(k)$
\begin{align}\label{relations}
&(\mathbb{B}_\mu(k,z)\hat{\psi}_{s},\hat{\psi}_{s})\\
&=\mu(|\widehat{V}|^\frac{1}{2}\widehat{R}_0(k,z)|\widehat{V}|^\frac{1}{2}\hat{\psi}_{s},\hat{\psi}_{s}) \nonumber \\
&=\mu(\widehat{R}_0(k,z)|\widehat{V}|^\frac{1}{2}\hat{\psi}_{s},|\widehat{V}|^\frac{1}{2}\hat{\psi}_{s})\nonumber \\
&=\mu\int\limits_{\T^d}
\frac{|\cF \circ |\widehat{V}|^{1/2}\hat{\psi}_s|^2\eta(\mathrm{d}p)}{\cE_k(p)-z}\nonumber \\
&=\mu\int\limits_{\T^d}
\frac{|\sum\limits_{x\in\Z^d}e^{i(p,x)}|\hat{v}(x)|^{\frac{1}{2}}\hat{\psi}_{s}(x)|^2\eta(\mathrm{d}p)}{\cE_k(p)-z}\nonumber\\
&=\mu|\hat{v}(s)|\int\limits_{\T^d}\frac{\eta(\mathrm{d}p)}{\cE_k(p)-z}.\nonumber
\end{align}
Since the last integral in  \eqref{relations} is monotonically
increasing and continuous function in $z\in
(-\infty,\cE_{\min}(k))$, similarly to the proof of Lemma
\ref{kernelBS}, one can show
\begin{equation*}
\lim_{z\rightarrow
\cE_{\min}(k)}\int\limits_{\T^d}\frac{\eta(\mathrm{d}p)}{\cE_k(p)-z}=\int\limits_{\T^d}
\frac{\eta(\mathrm{d}p)}{\cE_k(p)-\cE_{\min}(k)}=+\infty.
\end{equation*}
Therefore, for some $z<\cE_{\min}(k)$, the inequality
$(\mathbb{B}_\mu(k,z)\hat{\psi}_s,\hat{\psi}_s)>1$ holds, i.e., the
self-adjoint compact operator $\mathbb{B}_\mu(k,z)$ has an
eigenvalue in the interval $(1,+\infty)$. Then, Proposition \ref{BS}
yields that the operator $\widehat{H}_\mu(k)$ has an eigenvalue in
the interval $(-\infty,\cE_{\min}(k))$.$\square$

{\bf The proof of Theorem \ref{Inequality1}} can be proven
analogously to Theorem 1 of \cite{LakaevAlladust2014}.

{\bf Proof of Theorem \ref{Inequalities}}. Let $d=1,2$ and operator
$\widehat{H}_1(0),\,0\in \T^d$ has $m\geq 1$ eigenvalues $z_1(0)\le
...\le z_m(0)<\cE_{\min}(0)$ (counting multiplicities). Then for any
$z_0\in (z_m(0), \cE_{\min}(0))$, the Proposition \ref{BS} yields
the existence of $m-$ dimensional linear subspace  $\cH_m \subset
\ell^{2,e}(\Z^d)$ spanned to the eigenvectors, associated to the
eigenvalues $\lambda_1(0)\ge...\ge \lambda_m(0)>1$ (counting
multiplicities), of the operator $\mathbb{B}_1(0,z_0)$, such that
for any non-zero $\psi \in\cH_{m}$ the relations $|\cF \circ
|\widehat{V}|^{1/2}\hat{\psi}|\neq0$ and
\begin{align*}
&(\mathbb{B}_1(0,z_0)\hat{\psi},\hat{\psi})=
(|\widehat{V}|^\frac{1}{2}\widehat{R}_0(0,z_0)|\widehat{V}|^\frac{1}{2}\hat{\psi},\hat{\psi})=
(\widehat{R}_0(0,z_0)|\widehat{V}|^\frac{1}{2}\hat{\psi},|\widehat{V}|^\frac{1}{2}\hat{\psi})\\
&=\int\limits_{\T^d} \frac{|\cF \circ
|\widehat{V}|^{1/2}\hat{\psi}|^2\eta(\mathrm{d}p)}{\cE_{0}(p)-z_0}>(\hat{\psi},\hat{\psi})
\end{align*}
hold.

Since the function ${\varepsilon}$ is conditionally negative
definite, the minimum
$\cE_{\min}(k)=\min\limits_{p\in\T^d}{\cE}_k(p)=2\,
{\varepsilon}(\frac{k}{2}),\,k\in \T^d$ of the function ${\cE}_k(p)$
is attained at the point $p(k)=0$. Hence, for any non-zero $k\in
\T^d$ and $z_0\in (z_m(0),\, \cE_{\min}(0))$, the inequality
\begin{equation}\label{inequalityforEps}
\cE_0(p)-z_0>{\cE}_k(p)-[z_0+{\cE}_{\min
}(k)-\cE_{\min}(0)],\,\,p\in \T^d
\end{equation}
holds, which can be proven similarly to \cite[Lemma 5 ]{ALMM2006}.
Then for any non-zero $k\in \T^d$ and $\hat{\psi}\in\cH_{m}$ we have
\begin{align}\label{aaa3}
&\int\limits_{\T^d} \frac{|(\cF \circ
|\widehat{V}|^{1/2}\hat{\psi})(p)|^2\eta(\mathrm{d}p)}{\cE_{0}(p)-z_0}
<\int\limits_{\T^d}\frac{|(\cF \circ
|\widehat{V}|^{1/2}\hat{\psi})(p)|^2}{{\cE}_k(p)-[z_0+{\cE}_{\min
}(k)-{\cE}_{\min }(0)]}\eta(\mathrm{d}p).\\\no
\end{align}
Therefore, for any non-zero $k\in \T^d$ and $\hat{\psi} \in\cH_{m}$,
the relation
\begin{equation*}
(\mathbb{B}_\mu(k,z_0+[\cE_{\min}(k)-\cE_{\min}(0)]\hat{\psi},\hat{\psi})>(\hat{\psi},\hat{\psi})
\end{equation*}
holds, which implies that the compact operator
$\mathbb{B}_\mu(k,z_0+[\cE_{\min}(k)-\cE_{\min}(0)])$ has $m$
eigenvalues (counting multiplicity) greater than $1.$ Proposition
\ref{BS} gives that the operator $\widehat{H}_{\mu}(k),k \in
\mathbb{G}$ has $m$ eigenvalues $z_1(k)\le...\le z_m(k)$ (counting
multiplicity), which satisfy the relations
\begin{equation}\label{Inequalities2}
z_j(k)< z_0+[\cE_{\min}(k)-\cE_{\min}(0)]\,, j=1,...,m.
\end{equation}
This completes the proof of Theorem \ref{Inequalities}. $\square$.

{\bf Theorem \ref{existence(d>3)}} can be proven by the same way as
Theorem \ref{existence}.

{\bf Proof of  the Theorem \ref{finite}} follows from the equality
\begin{equation*} %
\mathcal{N}_{-}(\cE_{\min}(k),\widehat{H}_\mu(k))
=\mathcal{N}_{+}(1,\mathbb{B}_\mu(k,\cE_{\min}(k))),
\end{equation*}
which can be proven analogously to \cite[Theorem
6]{LakaevAlladust2014}.

{\bf Proof of the Theorem \ref{regular_point}.} If $\cE_{\min}(k_0)$
is a regular point, then the number $1$ is not an eigenvalue of the
operator $\mathbb{B}_\mu(k_0,\cE_{\min }(k_0)$ and hence there
exists a bounded operator
\begin{equation*}
(I-\mathbb{B}_\mu(k_0,\cE_{\min }(k_0)))^{-1}.
\end{equation*}

Since, the operator $\mathbb{B}_\mu(k,\cE_{\min }(k))$ is continuous
in $k\in \mathbb{G}$ (see (iii) of Lemma \ref{kernelBS}), there
exists a neighborhood  $\mathrm{G}(k_0) \subset \mathbb{G}$ of the
point $k_0\in \mathbb{G}$, such that the operator
$(I-\mathbb{B}_\mu(k,\cE_{\min}(k)))^{-1}$ exists and continuous in
$k\in \mathrm{G}(k_0).$ So, for any  $k\in \mathrm{G}(k_0)$, the
number of non-zero solutions of $\mathbb{B}_\mu(k,\cE_{\min
}(k))\hat{\psi}=\hat{\psi}$ in $\ell^{2,e}(\Z^d)$ remains unchanged.

The existence of a neighborhood of $\mu_0>0$ can be proven
similarly. $\square$

{\bf Proof of Theorem \ref{singular_point}}. We prove the cases
(iii) and (v) of Theorem \ref{singular_point}, since the cases (i),
(ii) and (iv) can be proven in the same way as (iii).

(iii) Under the assumptions of Theorem \ref{singular_point}, the
bottom $\cE_{\min }(k_0)$ is a singular point of
$\widehat{H}_1(k_0), k_0\in \mathbb{G}$ of multiplicity $n$.
Definition \ref{def_singular_point} of the singular point and the
Theorem \ref{General_BS} yield the existence a $n$- dimensional
subspace $\cH_{n} \subset \ell^{2,e}(\Z^d)$, associated to
eigenvalue $1$ and for any non-zero $\hat{\psi} \in\cH_{n}$ the
relations
\begin{align*}
&(\mathbb{B}_1(k_0,\cE_{\min}(k_0))\hat{\psi},\hat{\psi})=
(\widehat{R}_0(k_0,\cE_{\min}(k_0))|\widehat{V}|^\frac{1}{2}\hat{\psi},|\widehat{V}|^\frac{1}{2}\hat{\psi})=\int\limits_{\T^d}
\frac{|\cF \circ
|\widehat{V}|^{1/2}\hat{\psi}|^2\eta(\mathrm{d}p)}{\cE_{k_0}(p)-\cE_{\min}(k_0)}=(\hat{\psi},\hat{\psi})
\end{align*}
hold and therefore $|\cF \circ |\widehat{V}|^{1/2}\hat{\psi}|\neq0$.
Consequently, for any $k\in \cM_{>}(k_0;\cH_{n})$ and non-zero
$\hat{\psi}\in\cH_{n}$
\begin{align*}
&(\mathbb{B}_1(k,\cE_{\min}(k))\hat{\psi},\hat{\psi})-(\mathbb{B}_1(k_0,\cE_{\min }(k_0))\hat{\psi},\hat{\psi})\\
&=\int\limits_{\T^d}\frac{[(\cE_{k_0}(p)-\cE_{\min
}(k_0))-(\cE_{k}(p)-\cE_{\min }(k))]|\cF \circ
|\widehat{V}|^{1/2}\hat{\psi}|^2\eta(\mathrm{d}p)}
{[\cE_{k}(p)-\cE_{\min}(k))][\cE_{k_0}(p)-\cE_{\min }(k_0)]}.
\end{align*}
Applying the Fourier series expansion for $\varepsilon(p)$
\begin{equation*}
\cE_{k}(p)-\cE_{\min }(k)={\varepsilon}
(\frac{k}{2}+p)+{\varepsilon} (\frac{k}{2}-p)-2{\varepsilon}
(\frac{k}{2}) =2\sum_{s\in\Z^d} \hat{\varepsilon}(s) \cos
(\frac{k}{2},s)\cdot[\cos (p,s)-1],
\end{equation*}
gives the representation
\begin{align*}
&([\mathbb{B}_1(k,\cE_{\min}(k))-\mathbb{B}_1(k_0,\cE_{\min
}(k_0)]\hat{\psi},\hat{\psi})=2\sum_{s\in\Z^d,s\ne0}
\hat{\varepsilon}(s) [\cos (\frac{k}{2},s)-\cos (\frac{k_0}{2},s)]
\cC_s(k,k_0;\psi),
\end{align*}
where
\begin{align*}
&\cC_s(k_0,\psi;k)=\int\limits_{\T^d}\frac{[1-\cos(p,s))]|\cF \circ
|\widehat{V}|^{1/2}\hat{\psi}|^2\eta(\mathrm{d}p)}
{[\cE_{k}(p)-\cE_{\min}(k)][\cE_{k_0}(p)-\cE_{\min}(k_0)]}>0,\,s\in\Z^d\setminus\{0\}.
\end{align*}
So, the operator $\mathbb{B}_1(k,\cE_{\min}(k)),\,k\in
\mathbb{\cM}_{>}(k_0;\cH_{n})$ has $n$ eigenvalues greater than $1.$
Theorem \ref{General_BS} yields that the operator
$\widehat{H}_1(k),k\in \mathbb{\cM}_{>}(k_0;\cH_{n})$ has $n$
eigenvalues below the threshold $\cE_{\min}(k)$.

(v) For any $ k \in \cM_{<}(k_0;\ell^{2,e}(\Z^d))$ and non-zero
$\hat{\psi}\in \ell^{2,e}(\Z^d)$, we have
\begin{align*}
&\|\mathbb{B}_1(k,\cE_{\min }(k)\|\\
&= \sup_{\hat{\psi}\in \ell^{2,e}(\Z^d),\|\hat{\psi}\|=1 }(\mathbb{B}_1(k,\cE_{\min }(k))\hat{\psi},\hat{\psi})\\
&=\sup_{\hat{\psi}\in \ell^{2,e}(\Z^d),\|\hat{\psi}\|=1 }(|\widehat{V}|^\frac{1}{2}\widehat{R}_0(k,\cE_{\min }(k))
|\widehat{V}|^\frac{1}{2}\hat{\psi},\hat{\psi})\\
&=\sup_{\hat{\psi}\in \ell^{2,e}(\Z^d),\|\hat{\psi}\|=1 }(\widehat{R}_0(k,\cE_{\min }(k))|\widehat{V}|^\frac{1}{2}\hat{\psi},
|\widehat{V}|^\frac{1}{2}\hat{\psi})\\
&=\sup_{\hat{\psi}\in \ell^{2,e}(\Z^d),\|\hat{\psi}\|=1
}\int\limits_{\T^d}\frac{|\cF \circ
|\widehat{V}|^{1/2}\hat{\psi}|^2\eta(\mathrm{d}p)}
{\cE_{k}(p)-\cE_{\min }(k)}\\
&<\sup_{\hat{\psi}\in \ell^{2,e}(\Z^d),\|\hat{\psi}\|=1
}\int\limits_{\T^d}\frac{|\cF \circ
|\widehat{V}|^{1/2}\hat{\psi}|^2\eta(\mathrm{d}p)}
{\cE_{k_0}(p)-\cE_{\min }(k_0)}\\
&=\sup_{\hat{\psi}\in \ell^{2,e}(\Z^d),\|\hat{\psi}\|=1 }(\mathbb{B}_1(k_0,\cE_{\min }(k_0))\hat{\psi},\hat{\psi})\\
&=\|\mathbb{B}_1(k_0,\cE_{\min }(k_0)\|=1.
\end{align*}
Therefore, the non-negative compact operator
$\mathbb{B}_1(k,\cE_{\min}(k)),\,k\in
\mathbb{\cM}_{<}(k_0;\ell^{2,e}(\Z^d))$ may have only non-negative
eigenvalues smaller than $1$. Consequently, the Theorem
\ref{General_BS} yields that the operator $\widehat{H}_1(k),k\in
\mathbb{\cM}_{<}(k_0;\ell^{2,e}(\Z^d))$ has no eigenvalues below
$\cE_{\min}(k)$, the bottom of the essential spectrum of
$\widehat{H}_1(k)$.

{\bf Acknowledgments} S.N.Lakaev acknowledges to the Institute of
Applied Mathematics of the University Mainz for its kind hospitality
during his stay in the summer 2011 and also I.A.Ikromov and Sh.Yu.
Kholmatov for useful discussions and remarks. This research was
supported by the Foundation for Basic Research of the Republic of
Uzbe\-kistan (Grant No.OT-F4-66).

\end{document}